\definecolor{mygreen}{rgb}{0,0.2,0.8}
  \crefname{theorem}{Theorem}{Theorems}
  \crefname{thm}{Theorem}{Theorems}
  \crefname{thm*}{Theorem*}{Theorems}
    \crefname{problem}{Problem}{Theorems}
  \crefname{lemma}{Lemma}{Lemmas}
  \crefname{lem}{Lemma}{Lemmas}
  \crefname{remark}{Remark}{Remarks}
  \crefname{prop}{Proposition}{Propositions}
\crefname{notation}{Notation}{Notations}
\crefname{claim}{Claim}{Claims}
  \crefname{defn}{Definition}{Definitions}
  \crefname{corollary}{Corollary}{Corollaries}
  \crefname{section}{Section}{Sections}
  \crefname{figure}{Figure}{Figures}
    \crefname{assumption}{Assumption}{Assumptions}
\newtheorem{thm}{Theorem}[section]
\newtheorem{theorem}{Theorem}[section]
\newtheorem{conj}{Conjecture}[section]
\newtheorem{problem}{Problem}[section]
\newtheorem{thm*}{Theorem*}[section]
\newtheorem{lemma}[thm]{Lemma}
\newtheorem{corollary}[thm]{Corollary}
\newtheorem{prop}[thm]{Proposition}
\newtheorem{defn}[thm]{Definition}
\numberwithin{equation}{section}
\theoremstyle{definition}
\newtheorem{remark}[thm]{Remark}
\def\cN{\mathcal{N}}
\def\cL{\mathcal{L}}
\def\cI{\mathcal{I}}
\def\cE{\mathcal{E}}
\def\cC{\mathcal{C}}
\def\cA{\mathcal{A}}
\def\P{\mathbb{P}}
\def\E{\mathbb{E}}
\def\C{\mathbb{C}}
\def\R{\mathbb{R}}
\def\N{\mathbb{N}}
\def  \p- {p\textunderscore}
\def\eps{\varepsilon}
\def\ph{\varphi}
\def\bl{\boldsymbol{\lambda}}
\def\ef{\mathbf{f}}
\newcommand{\bdec}[3]{{E}_{#1 \overset{#3}{\to} #2}}
\newcommand{\dd}{\mathrm{d}}
\def\bl{\boldsymbol{\lambda}}
\def\ef{\mathbf{f}}
\def\bmu{\boldsymbol{\mu}}
\title{On the spectral geometry of Liouville quantum gravity}
\author{Nathana\"el Berestycki\\
\small{Universität Wien, Fakultät für Mathematik. }\\
\small{Oskar Morgenstern Platz, Wien 1090 -- Austria}\\
\small{ \texttt{nathanael.berestycki@univie.ac.at}}}
\date{December 2025}
\begin{document}

\maketitle

\begin{abstract}
We give a concise presentation of the construction of the Liouville quantum gravity (LQG) eigenvalues and eigenfunctions, i.e., the spectrum associated to the infinitesimal generator of Liouville Brownian motion, the canonical diffusion in the geometry of LQG. We describe the recently obtained Weyl law in this context \cite{BW} (giving a short summary of its proof) and report on some work in progress concerning the associated heat trace \cite{BK}. Finally, we summarise and propose some new key open problems in this direction. 
\end{abstract}
\tableofcontents

\section{Overview and motivation}

Liouville quantum gravity (LQG thereafter), also known as Liouville conformal field theory (CFT), aims to give a mathematically rigorous framework to the theory proposed by Polyakov \cite{Polyakov} for 2D quantum gravity, which is based on the formal action
\begin{equation}\label{E:Polyakov}
  S(\ph) = \frac1{4\pi} \int_{\Sigma} \Big[|\nabla^g \ph(z)|^2 + R_g Q \ph(z)  + 4 \pi \lambda e^{\gamma \ph(z)}  \Big] v_g( \dd z),
\end{equation}
where $(\Sigma,g)$ is a Riemann surface (which could be a domain or, in the case that concerns us below, the Riemann sphere), $\ph:\Sigma \to \R$ is a field (e.g. a distribution) on this surface, $\nabla^g \ph$ denote the gradient of $\ph$ with respect to the metric tensor $g$, $v_g$ is the associated volume form, $R_g$ is the scalar curvature, $\gamma \ge 0$ is the fundamental parameter of the theory (known as the {coupling constant}), $Q = (2/\gamma) + (\gamma/2)$, and $\lambda>0$ is a (less important) parameter known as cosmological constant.

The expression \eqref{E:Polyakov} is fundamentally of a geometrical nature and is closely related to the notion of \textbf{conformal factor}. Roughly speaking, one should think of the field $\ph: \Sigma \to \R$ appearing in \eqref{E:Polyakov} as inducing a new metric $g_\ph$ on $\Sigma$, obtained by locally rescaling the metric $g$ at each point of $\Sigma$ in such a way that (at least if $\ph$ were actually a smooth function on $\Sigma$), the associated volume form $v_\ph$ is $e^{\gamma \ph} v_g$.  
In reality, we will see below that a sample of the field $\ph$ associated to the action \eqref{E:Polyakov} will turn out to not even be defined pointwise and must instead be understood as a (random) distribution, i.e., a (random) generalised function. Such a field $\ph$ is therefore \emph{a priori} much too rough for the definition of $g_\ph$ to make litteral sense.

Assigning a rigorous meaning to the above expression, and more precisely to the measure on fields represented by the formal expression $\mathbf{P}(\dd\ph) = \exp ( - S(\ph)) \dd \ph$ (where $\dd \ph$ is a formal uniform measure on fields), is a nontrivial task, 
which was successfully resolved in the groundbreaking work by David, Kupiainen, Rhodes and Vargas \cite{DKRV} with some important preceding contributions by Duplantier and Sheffield \cite{DS}. While it would be futile to try and give here a complete overview of the state of the art in this field, we mention here briefly a few important directions of research which have been spectacularly successful in the last few years (for further details we refer the reader for instance to the book \cite{BP} which attempts to survey a number of these developments).  

\medskip (1) First, starting with \cite{DKRV} itself, a remarkable sequence of works 
including for instance \cite{DKRV, DOZZ,DS, GKRV} explored a deep connection to conformal field theory. In particular these authors were able to 
show that the associated correlation functions can be studied to great effect using the tools of conformal field theory (which are analytic and algebraic in essence), resulting notably in surprising, exact yet highly nontrivial, formulae; a particularly notable breakthrough in this direction is the proof of the celebrated DOZZ formula in \cite{DOZZ}. 

(2) In parallel, a powerful connection to Schramm--Loewner Evolutions (SLE) was revealed and exploited in works such as \cite{DS, LQGmating, Mliouville1, HoldenSun}. These authors were partly motivated by the conjectured connection to random planar maps (\cite{LeGall, Miermont}) and more generally planar maps decorated by a critical model of statistical mechanics such as the self-dual Fortuin--Kasteleyn percolation model. In this direction we mention in particular the remarkable work 
 \cite{HoldenSun} in which certain embeddings of random planar maps were shown to converge in an appropriate sense to LQG with coupling constant $\gamma = \sqrt{8/3}$ in the scaling limit. 
 
 (3) A series of works (see e.g. \cite{AHS, FZZ, AngRemySun, Backbone}) combined these two points of views in order to derive beautiful, \emph{exact} formulae related to SLE curves and their loop ensembles, eventually proving a number of prominent predictions coming from statistical mechanics and establishing the value of some critical exponents which were previously inaccessible through purely SLE-based methods. 
 
(4) Finally, a random metric associated with fields such as those in \eqref{E:Polyakov} was constructed in another remarkable series of works \cite{GM, DDDF}, generalising the earlier construction in \cite{Mliouville1} in the case $\gamma=\sqrt{8/3}$, and which is isometric to the so-called Brownian map (i.e., the scaling limit in a metric sense of uniform random planar maps, as proved in \cite{LeGall, Miermont}).

\medskip Despite the above remarkable successes, many basic questions concerning the geometry of LQG remain to date quite mysterious. 
Do classical notions of geometry have an LQG analogue? What about classical theorems of geometry? We choose to focus here on the \textbf{spectral geometry of LQG}, i.e., the study of associated eigenfunctions and eigenvalues (which we will define more precisely below). We ask questions such as: \emph{can one hear the shape of LQG?} (i.e., do the eigenvalues a.s. determine the underlying field $\ph$?), or \emph{are the eigenfunctions localised or delocalised?} 
This is not only mathematically natural, but, as we will see, the conjectures we make below suggest a rather fascinating connection with the field of \textbf{quantum chaos}.

\section{Background}

\subsection{Brief overview of Liouville quantum gravity}

The starting point of both \cite{DS} and \cite{DKRV} is the {Gaussian free field} on $\Sigma$. (When $\Sigma$ has a boundary, as in the case of a domain, then it is natural to impose either Dirichlet or Neumann boundary conditions for the Green function; otherwise, as in the case of the sphere, it is natural to require the field to have mean zero).  In order to assign a meaning to the exponential term in \eqref{E:Polyakov}, a prominent role in the theory is played by the random measure $\mu = \mu_h$ associated to a realisation $h$ of the Gaussian free field and formally defined by the expression
$
\mu(\dd z) =  \exp ( \gamma h(z) )) v_g(\dd z),
$
where $\gamma$ is the {coupling constant} from \eqref{E:Polyakov}. In reality the preceding expression is only formal, since $h$, as a random distribution, is too rough for its exponential to be directly well defined. Instead one must rely on the theory of {Gaussian multiplicative chaos} (\cite{K85, BerestyckiGMC}), in which we consider $\eps$-regularisations $h_\eps(z) = h \star \theta_{z,\eps}$ of $h$, where $\theta_{z, \eps}$ is the image of the regularisation kernel $\theta$ under translation and scaling which maps $B(0,1)$ to $B(z, \eps)$, and the regularisation kernel $\theta$ is a fairly arbitrary signed measure integrating the Green function (see \cite[Chapter 3]{BP}), or slightly more generally an element of the Sobolev space $H^{-1}(\Sigma)$. The idea is to show:

\begin{thm}\label{T:GMC}
\begin{equation}\label{eq:GMC1}
\mu_h( \mathrm{d} z) = \lim_{\eps\to 0} \eps^{\gamma^2/2} \exp \left( \gamma h_\eps( \dd z)  \right) v_g (\dd z)
\end{equation}
exists in probability.  Furthermore the resulting measure is nonzero precisely when $\gamma \in [0,2)$, and is independent of the choice of regularisation kernel $\theta$ subject to very mild conditions. 
\end{thm}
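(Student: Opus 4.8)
I would follow the ``elementary'' approach to Gaussian multiplicative chaos of \cite{BerestyckiGMC} (Kahane's original martingale-and-convexity method being an alternative). Set $\mu_\eps(\dd z):=\eps^{\gamma^2/2}e^{\gamma h_\eps(z)}v_g(\dd z)$. Working in a local chart and absorbing smooth factors, we may take $v_g$ to be Lebesgue measure on a bounded domain $D\subset\R^2$ and write the Green function as $G(u,v)=\log\frac{1}{|u-v|}+\tilde g(u,v)$ with $\tilde g$ continuous; by a standard separability argument it then suffices to prove that, for each fixed bounded continuous $f\ge0$, the variables $\mu_\eps(f):=\int_D f\,\dd\mu_\eps$ converge in probability, the upgrade to weak convergence of random measures following from a uniform first-moment bound (tightness plus a.s.\ finiteness) together with a countable dense family of test functions. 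Everything rests on one covariance estimate: since the regularisation kernels integrate $\log$,
\[
\E\big[h_\eps(x)\,h_{\eps'}(y)\big]\;=\;\log\frac{1}{|x-y|\vee\eps\vee\eps'}\;+\;O(1),
\]
the $O(1)$ term converging to $G(x,y)$ as $\eps,\eps'\to0$ for fixed $x\ne y$, with $\E[h_\eps(x)^2]=\log(1/\eps)+\tilde g(x,x)+c_\theta+o(1)$. In particular $\E[\mu_\eps(f)]=\int_D f(z)\,\eps^{\gamma^2/2}e^{\frac{\gamma^2}{2}\E[h_\eps(z)^2]}\,\dd z\to C_\theta\int_D f(z)\,e^{\frac{\gamma^2}{2}\tilde g(z,z)}\,\dd z$ for an explicit constant $C_\theta$ depending only on the kernel, which already confirms that $\eps^{\gamma^2/2}$ is the right normalisation.

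In the $L^2$ regime $\gamma^2<2$ I would show directly that $(\mu_\eps(f))_\eps$ is Cauchy in $L^2$. Expanding $\E[\mu_\eps(f)\mu_{\eps'}(f)]$ via the Gaussian identity $\E[e^{\gamma h_\eps(x)+\gamma h_{\eps'}(y)}]=\exp\!\big(\tfrac{\gamma^2}{2}(\E h_\eps(x)^2+\E h_{\eps'}(y)^2)+\gamma^2\E[h_\eps(x)h_{\eps'}(y)]\big)$ and inserting the estimates above, the integrand is dominated by $C\,f(x)f(y)\,|x-y|^{-\gamma^2}$, which is integrable on $D\times D$ precisely because $\gamma^2<2$. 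Dominated convergence then forces $\E[\mu_\eps(f)^2]$, $\E[\mu_{\eps'}(f)^2]$ and the cross term all to converge, as $\eps,\eps'\to0$, to the same limit
\[
\iint_{D\times D}f(x)f(y)\,e^{\gamma^2 G(x,y)}\,e^{\frac{\gamma^2}{2}(\tilde g(x,x)+\tilde g(y,y))}\,C_\theta^2\,\dd x\,\dd y,
\]
so $\E[(\mu_\eps(f)-\mu_{\eps'}(f))^2]\to0$ and a limit $\mu(f)$ exists in $L^2$, hence in probability. Running the identical computation with two kernels $\theta,\theta'$ shows the two limits coincide (up to the deterministic factor $C_\theta/C_{\theta'}$, which is $1$ under the standard normalisation of $\theta$), which is the claimed independence of the regularisation.

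For $\sqrt2\le\gamma<2$ the second moment of $\mu_\eps(f)$ diverges and the $L^2$ computation must be localised. The device is a barrier-truncated measure: for $\alpha$ in a suitable window above $2$ (whose nonemptiness is exactly the condition $\gamma<2$), set $\mu^\alpha_\eps(\dd z):=\mathbf{1}\{h_t(z)\le\alpha\log(1/t)+O(1)\ \text{for all}\ t\in[\eps,1]\}\,\mu_\eps(\dd z)$, $h_t$ being the coarse field at scale $t$. A Cameron--Martin tilt at a point $z$ turns the weighting $e^{\gamma h_\eps(z)}\dd z$ into a shift of the mean of $h_t(z)$ by $\gamma\log(1/t)+O(1)$; since $\alpha>\gamma$, the barrier event holds with probability bounded below as $\eps\to0$, so $\E[\mu^\alpha_\eps(f)]$ stays away from $0$, while a multifractal computation — this is where $\gamma<2$ enters — yields a bound on $\E[(\mu^\alpha_\eps(f))^2]$ uniform in $\eps$. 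With this one reruns the Cauchy-in-$L^2$ argument of the previous paragraph (the barrier having removed the non-integrable part of the kernel) to obtain a limit $\mu^\alpha$ in probability, together with independence of the kernel as before; one then checks that $\mu_\eps(f)-\mu^\alpha_\eps(f)\to0$ in probability as $\eps\to0$ — the discrepancy comes from points that are ``too thick'', and its mass is controlled by a first-moment estimate on an $\eps$-net, using $\alpha>2$ at this step — whence $\mu_\eps(f)\to\mu(f)$. Non-triviality for every $\gamma\in[0,2)$ then follows because $\E[\mu(f)]=\lim_\eps\E[\mu^\alpha_\eps(f)]>0$, combined with a $0$--$1$ argument: $\{\mu(D)=0\}$ is unchanged when the field is perturbed by a bounded function, hence lies in the tail of a decomposition of $h$ into independent smooth pieces and is trivial, and the positive expectation rules out probability $1$.

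In the complementary range $\gamma\ge2$ the same normalisation produces the zero measure. For $\gamma>2$ one takes $a\in(2,\gamma)$: off an event of probability $\lesssim\eps^{a^2/2-2}\to0$ one has $h_\eps\le a\log(1/\eps)$ throughout $D$, and there the endpoint Gaussian estimate with $a<\gamma$ gives $\E[\mu_\eps(f)]=O(\eps^{(\gamma-a)^2/2})\to0$, so Markov's inequality yields $\mu_\eps(f)\to0$; the borderline $\gamma=2$ is more delicate — a ballot-type estimate across all scales is needed — but the conclusion with the normalisation $\eps^{\gamma^2/2}$ is again the zero measure, the nondegenerate critical GMC requiring an extra $\sqrt{\log(1/\eps)}$. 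The main obstacle is the range $\sqrt2\le\gamma<2$: fixing the barrier (its level and offset), extracting the uniform $L^2$ bound for the truncated measure via the Girsanov/multifractal analysis, and then quantifying that the truncation is asymptotically negligible. This is exactly where the threshold $\gamma<2$ genuinely enters, and is the technical heart of \cref{T:GMC}; by contrast the $L^2$ regime and the degeneracy for $\gamma\ge2$ reduce to second- (resp.\ first-) moment computations, and the passage from test functions to random measures is routine.
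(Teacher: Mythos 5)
The paper does not actually prove \cref{T:GMC}: it is stated as background and delegated to the literature (\cite{K85, BerestyckiGMC}, \cite[Chapter 3]{BP}), so there is no in-paper argument to compare against. Your sketch is a faithful outline of the ``elementary approach'' of \cite{BerestyckiGMC} --- which is precisely one of the sources the paper cites --- and the overall architecture ($L^2$ computation for $\gamma^2<2$, barrier/thick-point truncation plus Girsanov for $\sqrt2\le\gamma<2$, first-moment degeneracy for $\gamma>2$, reduction to test functions) is correct.

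Two points deserve tightening. First, for a \emph{fixed} barrier offset the claim that $\mu_\eps(f)-\mu^\alpha_\eps(f)\to0$ in probability as $\eps\to0$ is not literally true: the first moment of the discrepancy (equivalently, the probability under the rooted measure that a Brownian motion with drift $\gamma$ ever crosses the line of slope $\alpha>\gamma$ started at height $A$) tends to a positive constant of order $e^{-2(\alpha-\gamma)A}$, not to zero. One needs the double limit --- small uniformly in $\eps$, then offset $A\to\infty$ --- which your ``$+O(1)$'' in the barrier quietly absorbs; this should be made explicit, since the Cauchy-in-probability argument for the untruncated measure rests on it. Second, the admissible window for $\alpha$ is $\gamma<\alpha<2\gamma-\sqrt{(2\gamma^2-4)_+}$ (the upper bound coming from integrability of $|x-y|^{-\gamma^2+(2\gamma-\alpha)^2/2}$); taking $\alpha$ just above $\gamma$ already suffices for the moment bounds, and the additional requirement $\alpha>2$ is only needed for your particular route to removing the truncation (a global union bound over scales), so you should verify that $(2,\,2\gamma-\sqrt{2\gamma^2-4})$ is nonempty --- it is, since $2\gamma-\sqrt{2\gamma^2-4}>2$ is equivalent to $(\gamma-2)^2>0$. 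Finally, your remark that with the $\eps^{\gamma^2/2}$ normalisation the limit depends on the kernel through the deterministic constant $C_\theta=e^{\gamma^2 c_\theta/2}$ (with $c_\theta=-\iint\log|u-v|\,\theta(\dd u)\theta(\dd v)$, which vanishes for the circle average) is accurate and is exactly the caveat hidden in the theorem's phrase ``subject to very mild conditions''.
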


(The convergence in the above result is for the weak convergence of measures.) This random measure is sometimes known as the Liouville volume measure. 
See \cite{RhodesVargas_survey} for a survey and \cite[Chapter 3]{BP} for a thorough introduction.


\medskip With this in hand, the authors of \cite{DKRV} simply define the measure $\mathbf{P}( \dd \ph)$ associated with \eqref{E:Polyakov} to be the ``law'' of $\ph = h + c$, where $h$ is a GFF on $\Sigma$ and $c$ (the ``zero mode'') is sampled from Lebesgue measure on $\R$ (we use quotation marks since this is an infinite measure), reweighted by $\exp ( - \tfrac1{4\pi} \int_\Sigma Q R_g \ph v_g - \mu \mu_\ph(\Sigma))$. In other words, they set
\begin{equation}\label{E:Polyakov2}
\int F(\ph) \mathbf{P} ( \dd \ph) = \int_{\R} \E^{\mathrm{GFF}}  \left[ F( h+c)  \exp ( - \frac{1}{4\pi} \int_\Sigma Q R_g (h+c) \dd v_g  - \lambda \mu_{h + c} (\Sigma) )  \right] \dd c. 
\end{equation}
The connection with \eqref{E:Polyakov} is that we view $\exp ( - \frac{1}{4\pi} \int_\Sigma |\nabla^g \ph |^2 \dd v_g ) \dd \ph$ as the law of $\ph:=h+c$, and we view the remaining terms in the formal expression $\exp ( -S(\ph)) \dd \ph$ as a Radon--Nikodym derivative with respect to this law. This leads to the expression \eqref{E:Polyakov2}.

It is common practice to denote the integral on the left hand side as $\langle F \rangle = \langle F\rangle_g$.  Thus, 
noting that $(h, R_g) = 0$ in the case where $g$ has constant curvature, and that $(c, R_g ) = 8\pi c$ by the Gauss--Bonnet theorem for any constant $c$, we have 
\begin{equation}\label{eq:Polyakovmeasure}
\langle F \rangle_g: = \int_\R \E[ F( \ph )  \exp ( - 2 Qc  - \lambda \mu_{\ph}   ( \Sigma) ) ]  \dd c ; \quad \ph = h + c.
\end{equation}
In particular, taking $g$ to be the standard round metric on the Riemann sphere, the {correlation functions} at points (insertions) $\mathbf{z} = (z_1, \ldots, z_n) $ and weights ${\alpha} = (\alpha_1, \ldots, \alpha_n )\ge 0$ are by definition given by
\begin{equation}\label{E:correl}
Z_{\alpha; \mathbf{z}} = \langle e^{\alpha_1 \ph(z_1) + \ldots  + \alpha_n \ph(z_n)} \rangle_g := \int_\R \E[ e^{\alpha_1 \ph(z_1) + \ldots + \alpha_n \ph (z_n) } \exp( - 2 Qc  - \lambda \mu_{\ph } ( \Sigma))  ] \dd c.
\end{equation}
In the above expression, a suitable regularisation must again be taken as $\ph(z_1), \ldots, \ph(z_k)$ are not pointwise defined. The authors of \cite{DKRV} proved the fundamental fact that these correlations are well defined if and only if the weights $\alpha_i$ satisfy the inequalities:
\begin{equation}\label{E:Seiberg}
\sum_{i=1}^n \alpha_i > 2Q, \quad \alpha_i < Q \quad (i = 1, \ldots, n),
\end{equation}
a set of conditions known as the {Seiberg bounds}. Roughly speaking, the role of the first condition is to ensure that the surface does not degenerate to a trivial surface (with zero area), while the second half of these conditions is a compactness condition ensuring that the volume near $z_i$ does not blow up. See \cite{DKRV} for details or \cite[Chapter 5]{BP} for a discussion. Note that \eqref{E:Seiberg} implies that the number $n$ of insertions must be at least 3. In particular, $\langle 1 \rangle = \infty$, i.e., $\mathbf{P}$ is not a finite measure. We obtain however a well defined probability measure by specifying a number of pairwise disjoint insertion points $z_1, \ldots, z_n \in \Sigma$ and associated weights $\alpha_1, \ldots, \alpha_n$ satisfying the Seiberg bounds \eqref{E:Seiberg}, and considering the law whose partition function is $Z_{\alpha; \mathbf{z}}^\gamma$: that is, for nonnegative measurable functionals $F$ (on, say, $H^{-1} ( \Sigma)$), we set
\begin{equation}\label{E:Polyakovcorrel}
\E_{\alpha; \mathbf{z}} ( F( \ph) ) = \frac{ \langle e^{\alpha_1 \ph (z_1) + \ldots + \alpha_n \ph(z_n) } F(\ph) \rangle}{Z_{\alpha; \mathbf{z}}}.
\end{equation}

Again, see \cite[Chapter 3]{BP} for details concerning the definition of \eqref{E:correl} and \eqref{E:Polyakovcorrel}. 
 We may think informally of the field $\ph$ as endowing $\Sigma$ with a ``random Riemannian structure''. Its associated ``volume measure'' is the finite random measure $\mu_\ph$ defined in \eqref{eq:GMC1} (with $\ph$ instead of $h$), which is then a.s. well-defined.

\medskip While the definition of the law $\P_{\alpha; \mathbf{z}}$ appears at first sight involved, it is necessary to work with this in order to see the rich integrable structure behind LQG. On the other hand, many qualitative features of LQG can be understood by considering the \textbf{toy model} where $\Sigma$ is taken to be a proper simply connected domain of the plane, and $\ph$ is simply a Gaussian free field with Dirichlet (or zero) boundary conditions (see, e.g., \cite[Chapter 1]{BP}); this was for instance the case considered in the landmark paper \cite{DS}. This is also what we do in most of what follows.


\subsection{Liouville Brownian motion}

Informally speaking, the eigenfunctions and eigenvalues which we aim to study in this note are those of an appropriate \textbf{Laplace--Beltrami operator} associated to LQG. In reality, this operator is somewhat cumbersome to describe in functional analytic terms, so we start by describing instead the Markov process associated to this generator. This is known under the name of \textbf{Liouville Brownian motion}. It can be viewed as the canonical diffusion in the random Riemannian geometry defined by $\ph$ on $\Sigma$ (whether $\ph$ has the law $\P_{\alpha; \mathbf{z}}$ above or is simply a Dirichlet GFF on the proper simply connected domain $\Sigma$ of the complex plane).
Its definition and construction, which dates back to the parallel works  \cite{diffrag, GRV}, starts with an ordinary Brownian motion $(X_t)_{t \ge 0}$ on the surface $\Sigma$ with respect to the metric tensor $g$ (or, in the toy model, an ordinary Brownian motion in the plane, killed upon leaving the domain $\Sigma$), and starting from a given point $x \in \Sigma$. 

\begin{defn}\label{D:LBM}
The Liouville Brownian motion $(Z_t)_{t\ge 0}$ is the process obtained as a time-change of $X$, namely $Z_t = X_{F^{-1}(t)}$, where $F^{-1} (t)$ denotes the inverse of the additive functional of $X$ (the so-called quantum clock) given by 
\begin{equation}\label{eq:clock}
F(t) = \int_0^t e^{\gamma \ph(X_s)} \dd s := \lim_{\eps \to 0} \int_0^t \eps^{\gamma^2/2} e^{\gamma \ph_\eps(X_s)  } \dd s.
\end{equation}
\end{defn}

The existence of this limit can be seen as following from the theory of Gaussian multiplicative chaos, and is nontrivial (i.e., nonconstant) for $\gamma\in [0,2)$, corresponding to the range for which LQG itself is nontrivial (recall Theorem \ref{T:GMC}). Again, the limit is independent of the choice of regularisation kernel $\theta$ subject to same mild conditions as in Theorem \ref{T:GMC}; see again \cite{BerestyckiGMC} and \cite[Chapter 3]{BP}.  

\medskip Thus, as a time-change of ordinary Brownian motion, Liouville Brownian motion is completely isotropic: it has no preferred direction. The effect of the random field $\ph$ is only through the time-parametrisation of $Z$: this can be viewed as a consequence of conformal invariance of Brownian motion in two dimensions. The time-change is such that $Z$ moves ``slowly'' (compared to $X$) whenever $Z$ is at a point where $\ph$ is large. This is related to the fact that near such points the area $\mu_\ph$ is also large: thus it is reasonable that it should take time for the diffusion to leave what is actually a ``big region'' for the intrinsic geometry defined by $\ph$.  

\medskip We will assume for concreteness in the rest of the article that we work with the toy model where $\Sigma$ is a bounded domain $\Sigma$ of the complex plane and $\ph$ is the Gaussian free field on $\Sigma$ with Dirichlet boundary conditions, but we do not expect any significant difference if $\ph$ has the more complex law specified by \eqref{E:Polyakovcorrel}.

\medskip It can be shown that the time-change $F^{-1}$ is continuous and strictly increasing whenever $0\le \gamma <2$. Furthermore, it is shown in \cite{GRV} that it is possible to extend Definition \ref{D:LBM} so as to give an almost sure meaning to $Z$ started from a given point $x \in \Sigma$, but from \emph{all} points $x \in \Sigma$ simultaneously. In other words, let us call $\mathbf{P}_x$ the law of $Z$ starting from $x \in \Sigma$. Then, on an event of probability one for the field $\ph$, the collection of laws $(\mathbf{P}_x)_{x\in \Sigma}$ makes sense for all points $x\in \Sigma$ simultaneously. Furthermore, \cite{GRV} shows that this collection of law defines (again, on an event of probability one for the field), a Feller process on $\Sigma$, and in fact a strongly Feller process: that is, its semigroup $\mathbf{P}_t$ defined almost surely for all $x \in \Sigma$ by $\mathbf{P}_t f(x) = \mathbf{E}_x [f(Z_t)]$ maps bounded Borel measurable functions to continuous bounded functions on $\Sigma$ (see for instance Theorem 5.1 in \cite{AndresKajino}). It is also strongly continuous on $L^2 (\mu_\ph)$, a.s. (see Theorem 2.18 in \cite{GRV}): that is, $\mathbf{P}_t f$ is well defined (and belongs to $L^2(\mu_\ph)$) as soon as $f \in L^2 (\mu_\ph)$; furthermore, $\mathbf{P}_t f \to f$ in $L^2 (\mu_\ph)$ as $t\to 0$. This semigroup is furthermore \textbf{symmetric}: that is, for all Borel nonnegative functions $f,g: \Sigma \to [0, \infty)$, we have
\begin{equation}\label{eq:Ptsym}
\int f \cdot( P_t g) \  \dd \mu_\ph = \int (P_t f) \cdot g\   \dd \mu_\ph .
\end{equation}
 See, e.g., Theorem 2.2 in \cite{AndresKajino}.

\medskip Although this will not be used further below, it was observed in \cite{GRV_hk} and \cite{AndresKajino} that Liouville Brownian motion can be defined in terms of Dirichlet forms \cite{Fukushima}, where the relevant Dirichlet form $\cE(f,g)$ is nothing but $\int_\Sigma \nabla f \cdot \nabla g $ on the Sobolev space $H_0^1 (\Sigma)$, viewed as a (strongly local regular, symmetric) Dirichlet form on $L^2 ( \mu_\ph)$. That is, $\mu_\ph$ is a reversible measure for Liouville Brownian motion. In other words, and using further the vocabulary and notions from Dirichlet forms theory \cite{Fukushima}, Liouville Brownian motion is an example of Markov process defined by the \textbf{Revuz correspondence} for which $\mu_\ph$ is the Revuz measure. 

\medskip Liouville Brownian motion is conjectured to be the \textbf{scaling limit} of simple random walk on many families of random planar maps under suitable (discrete conformal) embeddings. Such a statement was proved in joint work with Gwynne \cite{BerestyckiGwynne} on the Tutte embeddings of random planar maps known as CRT-mated planar maps. The latter are canonical discretisations of Liouville quantum gravity defined using the so-called \emph{mating of trees} framework developed by Duplantier, Miller and Sheffield \cite{LQGmating}. This clarifies the sense in which Liouville Brownian motion is the canonical diffusion in the geometry of LQG. It is worth noting that the result of \cite{BerestyckiGwynne} is based on an axiomatic characterisation of Liouville Brownian motion.

\subsection{Green function, spectrum}
\label{SS:Green_spectrum}

In this section we explain how the Green function, heat kernel and spectrum of Liouville Brownian motion can be defined rigorously; this follows roughly the approach in (\cite{AndresKajino}, \cite{MRVZ}, \cite{GRV_hk}) and can be considered background material for this article. It can thus safely be skipped by a reader eager to get to the newer material.

\medskip In the following we will require the following basic and easy estimate for the uniform modulus of continuity of $\mu_{\ph}$:

\begin{lemma}\label{L:modcont}
Let $\gamma<2$  and $\mu_\ph$ be as above. Then there exists $C = C(\omega) < \infty$ and $q >0$ nonrandom such that 
\begin{equation}\label{eq:largeballs}
\sup_{x \in \Sigma} \mu_\ph ( B (x,r)) \le C r^{q}.
\end{equation}
\end{lemma}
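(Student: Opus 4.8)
The plan is the standard route for Hölder-type control of Gaussian multiplicative chaos: a moment estimate at a fixed point, promoted to a uniform-in-$x$ statement by a union bound over a fine grid and Borel--Cantelli, with the exponent $q$ chosen \emph{deterministically} and the random scale below which the bound holds absorbed into $C=C(\omega)$. For Step 1, note that since $\gamma<2$ we have $4/\gamma^2>1$, so $\mu_\ph$ has a finite moment of some order $s\in(1,4/\gamma^2)$. Using the multifractal scaling of GMC --- which for the Dirichlet GFF on $\Sigma$ follows from the covariance structure $\E[\ph(x)\ph(y)]=-\log|x-y|+g(x,y)$, with $g$ continuous hence bounded on $\overline\Sigma$, via Kahane's convexity inequality (see \cite[Chapter 3]{BP}) --- one obtains a constant $C_1$, \emph{uniform in $x\in\Sigma$}, with
\[
\E\big[\mu_\ph(B(x,r))^s\big]\le C_1\, r^{\zeta(s)},\qquad \zeta(s):=\Big(2+\tfrac{\gamma^2}{2}\Big)s-\tfrac{\gamma^2}{2}s^2,
\]
for all $0<r\le 1$. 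Since $\zeta(1)=2$ and $\zeta'(1)=2-\gamma^2/2>0$, one can fix $s>1$ close enough to $1$ that moreover $\zeta(s)>2$.

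For Step 2, set $r_n=2^{-n}$ and let $\mathcal{G}_n\subset r_n\Z^2$ be the grid points within distance $1$ of $\Sigma$, so $\#\mathcal{G}_n\le C_2 r_n^{-2}$. Fix an exponent $q\in\big(0,(\zeta(s)-2)/s\big)$, which is a nonempty interval by Step 1. By Markov's inequality, a union bound, and Step 1,
\[
\P\Big(\max_{x\in\mathcal{G}_n}\mu_\ph(B(x,3r_n))>r_n^{q}\Big)\le \#\mathcal{G}_n\cdot r_n^{-qs}\,C_1(3r_n)^{\zeta(s)}\le C_3\, r_n^{\,\zeta(s)-qs-2}.
\]
By the choice of $q$ the exponent $\zeta(s)-qs-2$ is strictly positive, so the right-hand side is summable in $n$; by Borel--Cantelli there is an a.s.\ finite $N=N(\omega)$ such that $\max_{x\in\mathcal{G}_n}\mu_\ph(B(x,3r_n))\le r_n^{q}$ for all $n\ge N$.

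For Step 3, every $x\in\Sigma$ lies within $\sqrt2\,r_n$ of some $x_i\in\mathcal{G}_n$, so $B(x,r_n)\subset B(x_i,3r_n)$ and hence $\sup_{x\in\Sigma}\mu_\ph(B(x,r_n))\le r_n^{q}$ for $n\ge N$. For arbitrary $0<r\le r_N$, choosing $n\ge N$ with $r_{n+1}<r\le r_n$ gives $\mu_\ph(B(x,r))\le \mu_\ph(B(x,r_n))\le r_n^{q}=2^{q}r_{n+1}^{q}\le 2^{q}r^{q}$; and for $r_N<r$ we simply bound $\mu_\ph(B(x,r))\le\mu_\ph(\Sigma)\le\big(\mu_\ph(\Sigma)\,r_N^{-q}\big)r^{q}$. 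Taking $C(\omega):=\max\{2^{q},\,\mu_\ph(\Sigma)\,r_N^{-q}\}$, which is a.s.\ finite since $\mu_\ph(\Sigma)<\infty$, yields \eqref{eq:largeballs} with $q$ deterministic.

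The main obstacle is Step 1: we need the moment bound with a constant independent of the basepoint $x$. This is where the structure of the field enters --- the logarithmic singularity of the GFF covariance on the diagonal yields the exponent $\zeta(s)$, while continuity (hence uniform boundedness on the compact $\overline\Sigma$) of the regular part $g$, together with the fact that the field is if anything weaker near $\partial\Sigma$, makes the constant uniform in $x$. One also needs a moment of some order $s>1$ to be available, which is exactly the hypothesis $\gamma<2$. The remaining steps --- the union bound, Borel--Cantelli, and the passage from dyadic to arbitrary radii --- are routine.
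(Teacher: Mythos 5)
Your proof is correct and is exactly the standard argument: the paper itself does not prove Lemma \ref{L:modcont} but delegates it to the cited references (Exercise 3.4 in \cite{BP}, Lemma 3.1 in \cite{AndresKajino}, Lemma 3.1 in \cite{MRVZ}), all of which use the same route you take — a uniform multifractal moment bound $\E[\mu_\ph(B(x,r))^s]\le C r^{\zeta(s)}$ with $s>1$ chosen so that $\zeta(s)>2$, followed by Markov, a union bound over dyadic grids, Borel--Cantelli, and a covering/monotonicity step to pass to all centres and all radii. The details (choice of $s$ near $1$ using $\zeta'(1)=2-\gamma^2/2>0$, deterministic $q<(\zeta(s)-2)/s$, randomness absorbed into $C(\omega)$) are all handled correctly.
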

See for instance, Exercise 3.4 in \cite{BP} (alternatively, Lemma 3.1 in \cite{AndresKajino} or Lemma 3.1 in \cite{MRVZ}, for a proof).

\medskip As mentioned above, the infinitesimal generator of Liouville Brownian motion, although well defined abstractly for instance from Dirichlet form theory, is not especially convenient to manipulate -- the difficulty comes from the domain on which the generator is defined, which is not obvious to describe explicitly. However, its associated \textbf{Green function}, formally the inverse of the generator, is in fact well defined and easy to describe; this will make it simple to apply the spectral theorem to it in order to define the spectrum of LQG. To some extent this mirrors the situation in the standard Riemannian/Euclidean setup: indeed the negative Laplacian $-\Delta$ is not directly a compact operator (for instance, \emph{a posteriori}, its eigenvalues will tend to infinity, as the standard Weyl law indicates); however its inverse, namely the integral operator whose kernel is the standard Green function $g_\Sigma (x,y)$, \emph{is} compact, and indeed its eigenvalues (which are the inverses of those of $-\Delta$) tend to 0 and are thus bounded.

\medskip Thus consider Liouville Brownian motion, associated to a Gaussian free field $\ph$ on a domain $\Sigma$ with Dirichlet boundary conditions. From the definition, it is not hard to guess one has the following \textbf{occupation formula}:
\begin{lemma}\label{L:occ}
For a Borel nonnegative function $f:\Sigma \to \R$, if $\mathbf{T}_\Sigma = \inf\{ t\ge 0: Z_t \notin \Sigma\}$,
\begin{equation}\label{eq:occupation}
\mathbf{E}_x[ \int_0^{\mathbf{T}_\Sigma} f(Z_t) \dd t ] = \int_0^\infty g_\Sigma(x,y) f(y) \mu_\ph(\dd y).
\end{equation}
\end{lemma}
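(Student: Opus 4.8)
The plan is to read off \eqref{eq:occupation} from the time-change definition of Liouville Brownian motion, reducing it to the ordinary occupation formula for planar Brownian motion via a change of variables in the quantum clock. Throughout I would condition on a realisation of $\ph$ in the full-measure event on which $Z$ is simultaneously well defined from every starting point, the clock $F$ of \eqref{eq:clock} is continuous and strictly increasing, and $\mu_\ph$ obeys \cref{L:modcont}; all expectations below are then over the driving Brownian motion $X$ alone, with $\ph$ frozen. Writing $\tau_\Sigma = \inf\{ s \ge 0 : X_s \notin \Sigma \}$ for the exit time of $X$, the relation $Z_t = X_{F^{-1}(t)}$ forces $\mathbf{T}_\Sigma = F(\tau_\Sigma)$, with $F$ a continuous increasing bijection of $[0,\tau_\Sigma)$ onto $[0,\mathbf{T}_\Sigma)$.

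First I would prove the pathwise identity $\int_0^{\mathbf{T}_\Sigma} f(Z_t)\,\dd t = \int_0^{\tau_\Sigma} f(X_s)\,\dd F(s)$: for a fixed Brownian path and $t_0 < \tau_\Sigma$, substituting $t = F(s)$ in the Lebesgue--Stieltjes integral $\int_0^{F(t_0)} f(Z_t)\,\dd t$ gives $\int_0^{t_0} f(X_s)\,\dd F(s)$ because $Z_{F(s)} = X_s$ and $F$ is continuous and increasing, and one then lets $t_0 \uparrow \tau_\Sigma$ and invokes monotone convergence (using $f \ge 0$). Next I would regularise the clock: with $F_\eps(t) = \int_0^t \eps^{\gamma^2/2} e^{\gamma \ph_\eps(X_s)}\,\dd s$, which is absolutely continuous with density $\eps^{\gamma^2/2} e^{\gamma \ph_\eps(X_s)}$, the classical occupation formula for Brownian motion killed on leaving $\Sigma$, namely $\mathbf{E}_x[\int_0^{\tau_\Sigma} h(X_s)\,\dd s] = \int_\Sigma g_\Sigma(x,y)\, h(y)\,\dd y$ for Borel $h \ge 0$ (which is Fubini's theorem together with the fact that $g_\Sigma(x,\cdot)$ is the occupation density of killed Brownian motion), applied to $h(y) = f(y)\,\eps^{\gamma^2/2} e^{\gamma \ph_\eps(y)}$, yields
\[
\mathbf{E}_x\Big[ \int_0^{\tau_\Sigma} f(X_s)\,\dd F_\eps(s) \Big] \;=\; \int_\Sigma g_\Sigma(x,y)\, f(y)\, \eps^{\gamma^2/2} e^{\gamma \ph_\eps(y)}\,\dd y .
\]

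It remains to let $\eps \to 0$ in this identity. On the right-hand side, $\eps^{\gamma^2/2} e^{\gamma \ph_\eps(y)}\,\dd y \to \mu_\ph(\dd y)$ by \cref{T:GMC}; since $g_\Sigma(x,\cdot) f$ is bounded and continuous away from $x$, the integral over $\Sigma \setminus B(x,\delta)$ passes to the limit, while the contribution of the small ball $B(x,\delta)$ is controlled uniformly in $\eps$ using the logarithmic bound $g_\Sigma(x,y) \lesssim \log(1/|x-y|)$ and the (uniform-in-$\eps$) volume estimate $\mu_{\ph,\eps}(B(x,r)) \lesssim r^{q'}$ of the type provided by \cref{L:modcont}, so that letting $\delta \to 0$ recovers $\int_\Sigma g_\Sigma(x,y)\, f(y)\,\mu_\ph(\dd y)$. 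On the left-hand side, $F_\eps \to F$ in probability, uniformly on compact time intervals (a standard property of the chaos approximations underlying \eqref{eq:clock}), hence $\int_0^{\tau_\Sigma} f(X_s)\,\dd F_\eps(s) \to \int_0^{\tau_\Sigma} f(X_s)\,\dd F(s)$ in probability, and an $L^1$-bound on these integrals (again from chaos moment estimates) yields the uniform integrability needed to carry the limit through $\mathbf{E}_x$. Assembling the three steps gives \eqref{eq:occupation}; reducing from continuous bounded $f$ to general nonnegative Borel $f$ is a routine monotone-class argument.

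I expect the third step to be the main obstacle: neither the weak convergence of Gaussian multiplicative chaos nor the convergence $F_\eps \to F$ is by itself enough, because the former does not see test functions with a logarithmic singularity at the diagonal and the latter must be upgraded to an $L^1$ statement before it can be exchanged with the expectation; both gaps are precisely what the moment and modulus-of-continuity bounds (\cref{L:modcont} and its $\eps$-uniform analogue) are there to close. A shorter alternative avoids these limiting arguments entirely by appealing to the identification of $\mu_\ph$ as the Revuz measure of the positive additive functional $F$, as in \cite{GRV_hk, AndresKajino}: granting that, \eqref{eq:occupation} is exactly the standard formula expressing the $0$-resolvent of Brownian motion killed on exiting $\Sigma$, whose kernel is $g_\Sigma$, tested against its Revuz measure $\mu_\ph$.
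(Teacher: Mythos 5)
Your proposal is correct and follows essentially the same route as the paper: regularise the quantum clock, reduce to the classical occupation/Green-function identity for the killed Brownian motion applied to $h = f\,\eps^{\gamma^2/2}e^{\gamma\ph_\eps}$, and pass to the limit using the logarithmic singularity of $g_\Sigma$ together with \cref{L:modcont} on the right and uniform integrability of the regularised additive functionals on the left. The only cosmetic difference is that the paper phrases the left-hand limit in terms of $Z^\eps$ and its exit time $\mathbf{T}^\eps_\Sigma$ (with a uniform $L^p$ bound from \cite[Proposition 3.37]{BP}) rather than performing the Stieltjes substitution first, and your closing remark on the Revuz correspondence matches the paper's own comment following the lemma.
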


Here $g_\Sigma(x,y)$ is the Green function of ordinary Brownian motion killed upon leaving $\Sigma$ (see, e.g., \cite[Chapter 1]{BP}), which satisfies
\begin{equation}\label{eq:Green}
g_\Sigma(x,y) = - \frac1{2\pi} \log | x - y| + O(1).
\end{equation}
The occupation measure formula can be seen as an example of the Revuz correspondence between the process $Z$ and its Revuz measure $\mu_\ph$. It expresses the fact that $Z$ is a time-change of ordinary Brownian motion, where the time change is locally given by $\mu_\ph(\dd y)$ when the process is at $y \in \Sigma$. For instance, if $f = 1_{B(y,r)}$ is the indicator of a small ball near $y$, then the left hand side counts the expected amount of time (given the GFF, averaged just over the trajectories of the process) spent by Liouville Brownian motion in $B(y, r)$. Since only the time parametrisations of $Z$ and $X$ differ from one another, it is reasonable to expect that this expected time is given by $g_\Sigma(x,y) \mu_{\ph} (B(y, r))$ approximately. 

\begin{proof}[Proof of Lemma \ref{L:occ}]
To prove \eqref{eq:occupation}, consider $\eps$-regularisations of $\mu$ and $Z$ and let $\eps\to 0$: if $f\ge 0$ is continuous and bounded on $\Sigma$, $f(Z_t)$ is a.s. continuous as a functional on path space. Therefore, by the portmanteau theorem, a.s., $f(Z_t^\eps) \to Z_t$ as $\eps\to 0$, where $Z^\eps_t = X_{F_\eps^{-1}(t)}$ and $F_\eps(t) = \int_0^t \eps^{\gamma^2/2} e^{\gamma \ph_\eps(X_s)} \dd s .$ For the same reason (and since $\mathbf{T}_\Sigma$ is an a.s. continuous functional on path space), if we let $\mathbf{T}^\eps_\Sigma $ denote the first time that $Z^\eps$ leaves $\Sigma$, we first observe that $\mathbf{T}^\eps_\Sigma \to \mathbf{T}_\Sigma<\infty$ a.s. In fact, since $\sup_{\eps>0} \E((\mathbf{T}^\eps_\Sigma)^p) \le 1$ for some $p>1$ (see \cite[Proposition 3.37]{BP}), we deduce that  $\mathbf{E}_x ( \mathbf{T}^\eps_\Sigma) \to \mathbf{E}_x( \mathbf{T}_\Sigma)$, a.s. As a consequence, we have the following a.s. equalities:
\begin{align*}
\mathbf{E}_x[ \int_0^{\mathbf{T}_\Sigma} f(Z_t) \dd t] & = \lim_{\eps \to 0} \mathbf{E}_x[ \int_0^{\mathbf{T}^\eps_\Sigma} f(Z^\eps_t) \dd t]\\
& = \lim_{\eps\to 0}
\mathbf{E}_x [ \int_0^{T_\Sigma} f(X_s) \dd F_\eps(s) ] \quad  \text{ (where $T_\Sigma = \inf\{ s>0: X_t \notin \Sigma\}$)}\\
& = \lim_{\eps\to 0}
\mathbf{E}_x[  \int_0^{T_\Sigma} f(X_s) \eps^{\gamma^2/2} e^{\gamma \ph_\eps(X_s)} \dd s] \\
& = \lim_{\eps\to 0}
 \int_\Sigma g_\Sigma (x,y) f(y)   \eps^{\gamma^2/2} e^{\gamma \ph_\eps(y) }\dd y  \\
 & = \int_\Sigma g_\Sigma(x,y) f(y) \mu_\ph(\dd y)
\end{align*}
as desired. The last step of the proof requires some justification since $g_\Sigma$ is e.g. not continuous near $y = x$. On the other hand, the blow-up of $g_\Sigma$ near the diagonal is only logarithmic, see \eqref{eq:Green}, so that in combination with Lemma \ref{L:modcont} the last line above is indeed justified.
This completes the proof of Lemma \ref{L:occ} (see also \cite[Proposition B.1]{AndresKajino} for a somewhat more complicated proof).
\end{proof}

The occupation measure justifies the following definition of the Liouville Green function (or more appropriately Green operator since it is in fact not pointwise defined).
\begin{defn}
For a Borel nonnegative function $f:\Sigma\to \R$ we define 
$$
\mathbf{G} f(x) = \mathbf{E}_x [ \int_0^{\mathbf{T}_\Sigma} f(Z_t)\dd t ] = \int_\Sigma f(y) g_\Sigma(x,y) \mu_\ph (\dd y).
$$
In other words it is the operator associated to the integral kernel $g_\Sigma(x,y)$, but with respect to $\mu_\ph$ rather than the Lebesgue measure $\dd y$. 
\end{defn}

By the Cauchy--Schwarz inequality, and using the fact that almost surely $\sup_{x\in \Sigma} \int_\Sigma g_\Sigma^2(x,y) \mu_\ph(dy) <\infty$ (which can be seen from Lemma \ref{L:modcont}), we see that $\mathbf{G}$ can also a.s. be seen 
 as a linear operator on $L^2 (\mu_\ph)$. Indeed, on the above event, for \emph{all} $f \in L^2 (\ph)$ and for \emph{all}  $x\in \Sigma$,
$$
\left(\int_\Sigma g_\Sigma(x,y) |f(y) | \mu_\ph(\dd y)\right)^2 \le \int_\Sigma g_\Sigma (x,y)^2 \mu(\dd y) \int_\Sigma f(y)^2 \mu_\ph ( \dd y) < \infty.
$$ 
Thus, on this event of probability one, $\mathbf{G}f(x)$ is also defined for all $f\in L^2 (\mu_\ph)$ and all $x \in \Sigma$ simultaneously. In fact, we will see below that $\mathbf{G}$ maps a function $f \in L^2 ( \mu_\ph)$ to a function $\mathbf{G}f$ also in $L^2 ( \mu_\ph)$. 

\medskip Let us review a few fundamental aspects of spectral theory which will be relevant for what follows. Although this is very standard material we have decided to include it here as it is central to the question of how one can associate a spectrum to a diffusion as rough as Liouville Brownian motion and thus to our later work.

\medskip A linear operator $T:X \to Y$ (where $X,Y$ are two normed vector spaces) is called bounded if it has finite operator norm, i.e., if there exists $M>0$ such that $\|Tx\|_Y \le M \|x \|_X$. Such an operator is thus continuous. $T$ is called compact if it maps bounded sets of $X$ to relatively compact sets in $Y$ (such an operator is necessarily bounded). Now suppose $H$ is a Hilbert space and $T: H \to H$ is linear. It is called self-adjoint if $\langle Tx, y\rangle_H = \langle x, Ty\rangle_H$ for all $x,y \in H$. A commonly stated version of the spectral theorem is the following:

\begin{theorem}
Suppose $H$ is an infinite-dimensional, separable Hilbert space and $T: H \to H$ is self-adjoint and compact. Then there exists an orthonormal basis $(e_n)_{n\ge 0}$ consisting of eigenvectors of $T$, whose associated eigenvalue $\mu_n$ is real and $\mu_n \to 0$. 
\end{theorem}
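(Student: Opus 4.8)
The statement is the classical spectral theorem for compact self-adjoint operators, so the plan is to run its standard proof, extracting eigenvectors one at a time in decreasing order of the modulus of the eigenvalue. The engine of the argument is the following claim: \emph{if $T$ is compact, self-adjoint and nonzero, then $\|T\|$ or $-\|T\|$ is an eigenvalue of $T$.} To prove it, one uses the elementary fact that for a self-adjoint operator $\|T\| = \sup_{\|x\|=1} |\langle Tx,x\rangle|$; pick unit vectors $x_n$ with $\langle Tx_n,x_n\rangle \to \mu := \pm\|T\|$, and observe that $\|Tx_n - \mu x_n\|^2 = \|Tx_n\|^2 - 2\mu\langle Tx_n,x_n\rangle + \mu^2 \le 2\mu^2 - 2\mu\langle Tx_n,x_n\rangle \to 0$. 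By compactness of $T$ we may pass to a subsequence along which $Tx_n \to y$ for some $y\in H$; then $\mu x_n \to y$ as well, and since $\mu\ne 0$ we get $x_n \to e := y/\mu$, a unit vector. Passing to the limit in $Tx_n - \mu x_n \to 0$ using continuity of $T$ gives $Te = \mu e$.

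I would then iterate. Set $H_0 = H$ and $T_0 = T$; having produced orthonormal eigenvectors $e_0,\dots,e_{k-1}$ with real eigenvalues $\mu_0,\dots,\mu_{k-1}$, let $H_k = \{e_0,\dots,e_{k-1}\}^\perp$. Self-adjointness of $T$ together with the fact that each $e_j$ is an eigenvector makes $H_k$ invariant under $T$, so $T_k := T|_{H_k}$ is again compact and self-adjoint on the Hilbert space $H_k$. If $T_k = 0$ the process stops; otherwise the claim above produces a unit eigenvector $e_k\in H_k$ with $\mu_k = \pm\|T_k\|$, automatically orthogonal to $e_0,\dots,e_{k-1}$, and $|\mu_k| = \|T_k\| \le \|T_{k-1}\| = |\mu_{k-1}|$. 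If the construction produces infinitely many $e_k$, then $\mu_k \to 0$: were $|\mu_k|\ge\delta>0$ for all $k$, orthogonality would give $\|Te_j - Te_k\|^2 = \mu_j^2 + \mu_k^2 \ge 2\delta^2$ for $j\ne k$, so $(Te_k)$ would have no convergent subsequence, contradicting that $T$ sends the bounded set $\{e_k\}$ into a relatively compact set.

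Finally I would complete $(e_k)$ to a basis of eigenvectors. Let $V$ be the orthogonal complement of $\overline{\operatorname{span}}\{e_k\}$; it is $T$-invariant, and $T|_V = 0$. Indeed, if the construction terminated at some stage $N$ then $V = H_N$ and $T|_V = T_N = 0$ by definition, while if it ran forever then $V \subseteq H_k$ for every $k$, so $\|T|_V\| \le \|T|_{H_k}\| = |\mu_k| \to 0$. Hence every vector of $V$ is an eigenvector with eigenvalue $0$; since $H$ is separable, so is $V$, and it admits a countable orthonormal basis $(f_j)$. Then $(e_k) \cup (f_j)$ is an orthonormal basis of $H$ consisting of eigenvectors of $T$, the corresponding eigenvalues are real (each equals $\langle Te,e\rangle$, forced real by self-adjointness) and tend to $0$. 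The only genuinely substantial step is the claim highlighted in the first paragraph, producing a single eigenvector: this is precisely where compactness — rather than mere boundedness — is essential, and it is the step with no analogue for operators one cannot diagonalise; the rest of the argument is bookkeeping.
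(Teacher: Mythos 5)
Your proof is the standard, correct argument for the spectral theorem for compact self-adjoint operators: extract an eigenvector of maximal modulus eigenvalue via the variational characterisation $\|T\|=\sup_{\|x\|=1}|\langle Tx,x\rangle|$ plus compactness, iterate on orthogonal complements, and pad with an orthonormal basis of $\ker T$. The paper states this theorem as standard background without giving any proof, so there is nothing to compare against; your argument is complete, and every step (the invariance of $H_k$, the decay $\mu_k\to 0$ via the orthogonality of the images $Te_k$, and the vanishing of $T$ on the residual subspace $V$) is correctly justified.
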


In the above theorem, $\mu$ is an eigenvalue if there is a nonzero $f$ such that $Tf = \mu f$. Note that nothing prevents $\mu_n $ from being equal to zero. In fact, zero eigenvalues play a slightly special role: for instance, by the Riesz--Schauder theorem, all the nonzero eigenvalues of a compact operator are necessarily finite-dimensional: that is, if $T: H \to H$ is compact and $\mu$ is eigenvalue, then $\dim \ker ( T - \mu I) < \infty$.  

 We will aim to show that we can apply the spectral theorem to $\mathbf{G}$ acting on the Hilbert space $H = L^2(\mu_\ph)$ (we will show separately that zero cannot be an eigenvalue). 
Our first task is to check compactness. For this, we will rely on the notion of a Hilbert--Schmidt operator. Recall that the operator $T: H\to H$ is called \textbf{Hilbert--Schmidt} if
$$
\|T \|_{\mathrm{HS}}^2 = \sum_{i=1}^\infty \|Te_i\|_H^2 < \infty 
$$
for some orthonormal basis $(e_i)_{i\ge 1}$ of $H$; the norm $\|T\|_{\mathrm{HS}}$ is then independent of the choice of basis of $H$. Note that if $T$ is Hilbert--Schmidt then for $x = \sum_i x_i e_i \in H$ one has 
\begin{align}
\|T x\|_H &= \|\sum_{i=1}^\infty x_i T(e_i)\|_H   \le \sum_{i=1}^\infty |x_i|\cdot \|T(e_i)\|_H \nonumber \\
&\le \left( \sum_{i=1}^\infty x_i^2 \cdot \sum_{i=1}^\infty \|T(e_i)\|_H^2 \right)^{1/2}\nonumber \\
&=  \|x\|_H \|T\|_{\mathrm{HS}} \label{HSop}
\end{align}
so that the operator norm of $T$ is bounded by the Hilbert--Schmidt norm of $T$. In particular, such an operator is then necessarily compact as the limit (with respect to the Hilbert Schmidt norm, hence also in the operator norm) of bounded finite rank operators.  

An important example of Hilbert--Schmidt operators is provided by Hilbert-Schmidt integral operators: suppose $(X, \cA, \mu)$ is a measure space and $g: X \times X \to \R$ is a measurable function. Then $g$ induces an operator $T$ (let us not specify precisely the space on which $T$ acts at the moment) via the formula $Tf(x) = \int_X g(x,y) f(y) \mu(\dd y)$, whenever the integral in the right hand side is defined. 

\begin{lemma}\label{L:HS} Suppose $g \in L^2 ( \mu \otimes \mu)$. Then $T$ maps $H = L^2(\mu)$ to itself and furthermore $T$ is a Hilbert--Schmidt operator on $H$, with $\|T\|_{\mathrm{HS}} = \|g\|_{L^2(\mu \otimes \mu)}$. 
 In particular $T$ is compact. 
\end{lemma}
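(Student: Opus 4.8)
The plan is to verify the three assertions in turn: that $T$ maps $L^2(\mu)$ to $L^2(\mu)$, that it is Hilbert--Schmidt with the stated norm identity, and then compactness comes for free from the discussion preceding the lemma. First I would observe that for $\mu$-a.e.\ $x$, the section $g(x, \cdot)$ lies in $L^2(\mu)$; this follows from Tonelli's theorem applied to $|g|^2 \in L^1(\mu \otimes \mu)$, which gives $\int_X \left( \int_X g(x,y)^2 \mu(\dd y) \right) \mu(\dd x) = \|g\|_{L^2(\mu\otimes\mu)}^2 < \infty$, so the inner integral is finite for a.e.\ $x$. For such $x$, Cauchy--Schwarz gives $|Tf(x)|^2 \le \left( \int_X g(x,y)^2 \mu(\dd y)\right) \|f\|_{L^2(\mu)}^2$, and integrating this bound in $x$ shows $Tf \in L^2(\mu)$ with $\|Tf\|_{L^2(\mu)} \le \|g\|_{L^2(\mu\otimes\mu)} \|f\|_{L^2(\mu)}$; in particular $Tf$ is well defined as an element of $L^2(\mu)$ (changing $f$ on a $\mu$-null set changes $Tf(x)$ not at all for a.e.\ $x$).

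Next I would compute the Hilbert--Schmidt norm. Fix an orthonormal basis $(e_i)_{i \ge 1}$ of $L^2(\mu)$ (here one uses that $L^2(\mu)$ is separable, which holds in the setting of interest since $\Sigma$ is a nice domain and $\mu_\ph$ a Radon measure). For each fixed $x$ with $g(x,\cdot) \in L^2(\mu)$, Parseval's identity gives $\sum_{i} |Te_i(x)|^2 = \sum_i |\langle g(x,\cdot), e_i \rangle_{L^2(\mu)}|^2 = \|g(x,\cdot)\|_{L^2(\mu)}^2 = \int_X g(x,y)^2 \mu(\dd y)$. Now integrate over $x$ and apply Tonelli once more to interchange the sum and the integral:
\begin{equation*}
\|T\|_{\mathrm{HS}}^2 = \sum_{i=1}^\infty \|Te_i\|_{L^2(\mu)}^2 = \sum_{i=1}^\infty \int_X |Te_i(x)|^2 \mu(\dd x) = \int_X \int_X g(x,y)^2 \mu(\dd y)\, \mu(\dd x) = \|g\|_{L^2(\mu\otimes\mu)}^2.
\end{equation*}
This is finite by hypothesis, so $T$ is Hilbert--Schmidt with $\|T\|_{\mathrm{HS}} = \|g\|_{L^2(\mu\otimes\mu)}$, and the independence of the basis is part of the general Hilbert--Schmidt formalism already recalled. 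Compactness then follows immediately from the observation made in the text that a Hilbert--Schmidt operator is a norm limit of finite-rank operators, hence compact.

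There is no serious obstacle here; the only points requiring a little care are (i) the a.e.\ finiteness of the sections of $g$, so that $Tf(x)$ and $Te_i(x)$ are genuinely defined pointwise a.e., and (ii) the two applications of Tonelli's theorem, which are legitimate precisely because the integrand $g^2 \ge 0$ is measurable on the product space. One should also note in passing that $T$ is self-adjoint whenever $g$ is symmetric ($g(x,y) = g(y,x)$), which is the case for the Green function $g_\Sigma$ and is what will let us later apply the spectral theorem to $\mathbf{G}$; but that symmetry is not needed for the present lemma.
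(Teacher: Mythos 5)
Your proof is correct and follows essentially the same route as the paper: Parseval's identity applied to the sections $g(x,\cdot)$ expanded in an orthonormal basis, combined with Tonelli/Fubini for the nonnegative integrand $g^2$, yields $\|T\|_{\mathrm{HS}} = \|g\|_{L^2(\mu\otimes\mu)}$, with compactness following from the general Hilbert--Schmidt discussion. The only (immaterial) difference is that you establish $Tf \in L^2(\mu)$ up front via Cauchy--Schwarz on the sections, whereas the paper deduces it afterwards from the finiteness of the Hilbert--Schmidt norm together with the operator-norm bound \eqref{HSop}.
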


\begin{proof}

Fix $(e_i)_{i\ge 1}$ an orthonormal basis of $H = L^2(\mu)$. We have by the positive case of Fubini's theorem (note that, a priori, any of the terms could be infinite),
\begin{align*}
\|T\|_{\mathrm{HS}}^2 & = \sum_{i=1}^\infty \| T e_i\|_H^2 \\
& = \sum_{i=1}^\infty \int_X \mu(\dd x) |T e_i(x)|^2 \\
& =  \int_X \mu(\dd x) \sum_{i=1}^\infty | \langle g(x, \cdot), e_i\rangle_H |^2  \\
& = \int_X \mu(\dd x) \| g(x, \cdot)\|_H^2
\end{align*}
by Parseval's indentity. Now, $\| g(x, \cdot)\|_H^2 = \int_X g(x,y)^2 \mu( \dd y)$. Thus
$$
\|T\|_{\mathrm{HS}}^2 = \iint_{X\times X} \mu(\dd x) g(x,y)^2 \mu ( \dd y) = \| g\|_{L^2(\mu\otimes \mu)}^2,
$$
as desired, and in particular $\|T\|_{\mathrm{HS}}^2< \infty$. In particular, $\|Te_i\|_H^2 <\infty$ so $Te_i \in L^2(\mu)$. Not only this, but if $f = \sum_i a_i e_i \in H$, then the argument in \eqref{HSop} shows that $\| Tf \|_H < \infty$ so $Tf \in H $ and so $T$ really does map $H$ to $H$. 
\end{proof}

\begin{corollary} Let $\gamma <2$. The Liouville Green operator $\mathbf{G}$ is a Hilbert--Schmidt (hence compact) and self-adjoint operator from $L^2 ( \mu_\ph)$ to itself. 
\end{corollary}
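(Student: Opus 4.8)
The plan is to deduce the statement from Lemma \ref{L:HS} applied to the measure space $(\Sigma,\mu_\ph)$ and the integral kernel $g_\Sigma$, together with the symmetry of the classical Green function. \textbf{Step 1 (reduction).} By Lemma \ref{L:HS} it is enough to prove that, on an event of probability one, $g_\Sigma \in L^2(\mu_\ph \otimes \mu_\ph)$, i.e.
\[
\iint_{\Sigma \times \Sigma} g_\Sigma(x,y)^2 \, \mu_\ph(\dd x)\, \mu_\ph(\dd y) < \infty .
\]
Indeed, once this is established, Lemma \ref{L:HS} immediately gives that $\mathbf{G}$ maps $L^2(\mu_\ph)$ into itself, is Hilbert--Schmidt with $\|\mathbf{G}\|_{\mathrm{HS}} = \|g_\Sigma\|_{L^2(\mu_\ph \otimes \mu_\ph)}$, and hence compact.

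\textbf{Step 2 (square-integrability of the kernel).} Since $\Sigma$ is bounded, $g_\Sigma$ is bounded away from the diagonal, and by \eqref{eq:Green} we have $g_\Sigma(x,y) \le C\,(1 + |\log|x-y||)$ on all of $\Sigma\times\Sigma$ for some deterministic constant $C$; hence $g_\Sigma(x,y)^2 \le C'\,(1 + (\log|x-y|)^2)$. To bound $\int_\Sigma (\log|x-y|)^2\,\mu_\ph(\dd y)$ uniformly in $x$, decompose $\Sigma$ into the dyadic annuli $A_k(x) = \{ y : 2^{-k-1} \le |x-y| < 2^{-k}\}$, $k\ge 0$, together with the region $\{|x-y|\ge 1\}$, on which $(\log|x-y|)^2$ is bounded. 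On $A_k(x)$ one has $(\log|x-y|)^2 \le C k^2$, while Lemma \ref{L:modcont} gives $\mu_\ph(A_k(x)) \le \mu_\ph(B(x,2^{-k})) \le C(\omega)\, 2^{-kq}$ with $q>0$ nonrandom. Summing, $\sup_{x\in\Sigma} \int_\Sigma (\log|x-y|)^2\,\mu_\ph(\dd y) \le C(\omega)\sum_{k\ge 0} k^2 2^{-kq} + C < \infty$ almost surely (this is precisely the estimate $\sup_{x}\int_\Sigma g_\Sigma^2(x,y)\,\mu_\ph(\dd y)<\infty$ invoked earlier). Since $\mu_\ph(\Sigma)<\infty$ almost surely, integrating this bound against $\mu_\ph(\dd x)$ yields $\iint g_\Sigma^2 \,\dd\mu_\ph\,\dd\mu_\ph \le \mu_\ph(\Sigma)\cdot \sup_x \int_\Sigma g_\Sigma^2(x,y)\,\mu_\ph(\dd y) < \infty$, which is what Step 1 requires.

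\textbf{Step 3 (self-adjointness).} The Green function of Brownian motion killed on exiting $\Sigma$ is symmetric, $g_\Sigma(x,y) = g_\Sigma(y,x)$. Combined with the square-integrability of the kernel just proved, Fubini's theorem gives, for all $f,h \in L^2(\mu_\ph)$,
\[
\langle \mathbf{G}f, h\rangle_{L^2(\mu_\ph)} = \iint_{\Sigma\times\Sigma} g_\Sigma(x,y)\, f(y)\, h(x)\, \mu_\ph(\dd y)\,\mu_\ph(\dd x) = \langle f, \mathbf{G}h\rangle_{L^2(\mu_\ph)},
\]
so $\mathbf{G}$ is self-adjoint. (Alternatively one may write $\mathbf{G} = \int_0^\infty \mathbf{P}_t\,\dd t$ and invoke the symmetry \eqref{eq:Ptsym} of the Liouville heat semigroup.)

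The only substantive point is Step 2, and there the work is essentially contained in Lemma \ref{L:modcont}: the singularity of $g_\Sigma$ on the diagonal is merely logarithmic, so \emph{any} positive value of the ball-mass exponent $q$ in \eqref{eq:largeballs} suffices to make the double integral converge. Everything else is a direct application of Lemma \ref{L:HS} and the symmetry of the classical Green function.
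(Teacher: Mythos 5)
Your proof is correct and follows essentially the same route as the paper: reduce to $g_\Sigma \in L^2(\mu_\ph\otimes\mu_\ph)$ via Lemma \ref{L:HS}, verify this from Lemma \ref{L:modcont} together with the logarithmic divergence \eqref{eq:Green} (your dyadic-annulus computation just spells out what the paper leaves implicit), and obtain self-adjointness from the symmetry of $g_\Sigma$ and Fubini.
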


\begin{proof} By Lemma \ref{L:HS} it suffices to check that $g_\Sigma \in L^2 (\mu_\ph \otimes \mu_\ph)$. However this follows at once from  Lemma \ref{L:modcont} and the logarithmic divergence of $g_\Sigma$ on the diagonal \eqref{eq:Green}.  It remains to check that $\mathbf{G}$ is self-adjoint, but this is easy to check. Indeed, since $g_\Sigma(x,y) = g_\Sigma(y,x)$ is reversible for the ordinary Brownian motion, if $f,g \in L^2(\mu_\ph)$, 
\begin{align*}
\langle \mathbf{G}f , g\rangle_{L^2 (\mu_\ph)} &= \int_\Sigma \left(\int_\Sigma g_\Sigma(x,y) f(y) \mu_\ph(\dd y) \right) g(x) \mu_\ph(\dd x)\\
& = \iint g_\Sigma(x,y) f(x) g(y) \mu_\ph(\dd x) \mu_\ph(\dd y) = \langle f, \mathbf{G} g\rangle_{L^2(\mu_\ph)}.
\end{align*}
This concludes the proof of the corollary.
\end{proof}

\medskip Altogether, the spectral theorem therefore shows that there exists an orthonormal basis $(\boldsymbol{\mu}_n)_{n\ge 1}$ of eigenvectors in $L^2(\mu_\ph)$ such that each eigenvalue $\boldsymbol{\mu}_n$ is real and $\boldsymbol{\mu}_n \to 0$. These eigenvalues are associated to eigenfunctions $(\ef_n)_{n\ge 1}$ which form an orthonormal basis of $L^2 ( \mu_\ph)$.  We will now show that the eigenvalues are furthermore nonnegative and in fact nonzero (a possibility which is obviously not precluded by the spectral theorem).

\begin{lemma} Almost surely, $\mathbf{G}$ is a nonnegative operator on $L^2 (\mu_\ph)$. That is, almost surely, for all $f \in L^2 ( \mu_\ph)$ we have $\langle f, \mathbf{G}f \rangle_{L^2(\mu_\ph)} \ge 0$. In particular all the eigenvalues $\bmu_n $ of $\mathbf{G}$ are nonnegative. Furthermore, $\ker (\mathbf{G}) = 0$ a.s., thus $\bmu_n >0$ for all $n\ge 0$, a.s.
\end{lemma}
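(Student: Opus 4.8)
The plan is to establish nonnegativity of $\mathbf{G}$ via the probabilistic representation, and then to establish injectivity by exploiting the strong Feller / strong continuity properties of Liouville Brownian motion recalled above. For nonnegativity, I would first treat the case of a bounded nonnegative test function $f$ and note that $\langle f, \mathbf{G}f\rangle_{L^2(\mu_\ph)} = \int_\Sigma f(x) \mathbf{E}_x[\int_0^{\mathbf{T}_\Sigma} f(Z_t)\,\dd t]\,\mu_\ph(\dd x) \ge 0$ trivially, since $f\ge 0$. To extend to general $f \in L^2(\mu_\ph)$, I would use the symmetry and semigroup structure: writing $\mathbf{G} = \int_0^\infty \mathbf{P}_t\,\dd t$ (the integrated semigroup), we have $\langle f,\mathbf{G}f\rangle = \int_0^\infty \langle f, \mathbf{P}_t f\rangle\,\dd t$, and each $\langle f, \mathbf{P}_t f\rangle \ge 0$ because $\mathbf{P}_t$ is a symmetric Markovian (sub-Markovian) operator: indeed $\langle f,\mathbf{P}_t f\rangle = \langle \mathbf{P}_{t/2} f, \mathbf{P}_{t/2} f\rangle_{L^2(\mu_\ph)} = \|\mathbf{P}_{t/2} f\|_{L^2(\mu_\ph)}^2 \ge 0$, using $\mathbf{P}_t = \mathbf{P}_{t/2}\mathbf{P}_{t/2}$ and self-adjointness of $\mathbf{P}_{t/2}$. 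This simultaneously gives nonnegativity of all eigenvalues $\bmu_n$, since if $\mathbf{G}\ef_n = \bmu_n \ef_n$ then $\bmu_n \|\ef_n\|^2 = \langle \ef_n,\mathbf{G}\ef_n\rangle \ge 0$.

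For injectivity, the key point is that $\mathbf{G}f = 0$ in $L^2(\mu_\ph)$ should force $f = 0$. From the identity $\langle f,\mathbf{G}f\rangle = \int_0^\infty \|\mathbf{P}_{t/2}f\|^2_{L^2(\mu_\ph)}\,\dd t$, if $\mathbf{G}f = 0$ then $\|\mathbf{P}_t f\|_{L^2(\mu_\ph)} = 0$ for (Lebesgue-almost, hence by continuity in $t$ of $t \mapsto \|\mathbf{P}_t f\|$ — which follows from strong continuity of the semigroup — for all) $t>0$; thus $\mathbf{P}_t f = 0$ in $L^2(\mu_\ph)$ for all $t>0$. Letting $t \to 0$ and invoking strong continuity of $(\mathbf{P}_t)$ on $L^2(\mu_\ph)$ (Theorem 2.18 in \cite{GRV}, recalled above), we get $f = \lim_{t\to 0}\mathbf{P}_t f = 0$ in $L^2(\mu_\ph)$. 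Hence $\ker(\mathbf{G}) = 0$ a.s., and combined with the previous paragraph each eigenvalue satisfies $\bmu_n > 0$.

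I would double-check the one subtle integrability point: to write $\mathbf{G} = \int_0^\infty \mathbf{P}_t\,\dd t$ and manipulate $\langle f,\mathbf{G}f\rangle = \int_0^\infty \langle f,\mathbf{P}_t f\rangle\,\dd t$ rigorously, one needs $\int_0^\infty \|\mathbf{P}_t f\|_{L^2(\mu_\ph)}\,\dd t < \infty$ (or at least that the Green operator agrees with the integrated semigroup on $L^2$), which follows from the Hilbert--Schmidt bound $\|\mathbf{G}\|_{\mathrm{HS}} = \|g_\Sigma\|_{L^2(\mu_\ph\otimes\mu_\ph)} < \infty$ established in the preceding corollary together with the occupation formula of Lemma \ref{L:occ}, since that formula literally says $\mathbf{G}f(x) = \mathbf{E}_x[\int_0^{\mathbf{T}_\Sigma} f(Z_t)\,\dd t] = \int_0^\infty \mathbf{P}_t f(x)\,\dd t$ for nonnegative $f$ (and then one extends by linearity / dominated convergence splitting $f = f_+ - f_-$). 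The main obstacle is therefore not any deep fact but rather making sure all these Fubini-type interchanges and the identification $\mathbf{G} = \int_0^\infty \mathbf{P}_t\,\dd t$ on $L^2(\mu_\ph)$ hold on the single almost-sure event (for the field $\ph$) on which all of Lemma \ref{L:modcont}, the occupation formula, and the strong continuity of the semigroup are simultaneously valid; once one is on that event, both nonnegativity and injectivity follow cleanly from the semigroup factorisation $\mathbf{P}_t = \mathbf{P}_{t/2}^2$ and strong continuity.
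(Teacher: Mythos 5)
Your proposal is correct and follows essentially the same route as the paper: the representation $\mathbf{G}f=\int_0^\infty \mathbf{P}_t f\,\dd t$ from the occupation formula, the factorisation $\langle f,\mathbf{P}_t f\rangle=\|\mathbf{P}_{t/2}f\|_{L^2(\mu_\ph)}^2\ge 0$ via symmetry and the semigroup property, Fubini to conclude nonnegativity, and strong $L^2$-continuity of the semigroup as $t\to 0$ to get $\ker(\mathbf{G})=0$. The extra care you take with the Fubini interchange and the a.s.\ event is sensible but does not change the argument.
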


\begin{proof}
Let $f \in L^2 ( \mu_\ph)$. 
Note that by Lemma \ref{L:occ} and since $\mathbf{G}f \in L^2 (\mu_\ph)$ we can write 
$$
\mathbf{G}f (x) = \mathbf{E}_x ( \int_0^{\mathbf{T}_\Sigma}  f( \mathbf{Z}_t) \dd t ) = \int_0^\infty \mathbf{P}_t f (x)  \dd t .
$$
Furthermore, using the semigroup property (i.e., the Markov property) at time $t/2$, and the symmetry \eqref{eq:Ptsym} of the semigroup,
$$
\langle f, \mathbf{P}_t f \rangle_{L^2 (\mu_\ph)} = \langle f, \mathbf{P}_{t/2} \mathbf{P}_{t/2} f \rangle_{L^2 (\mu_\ph)}  = \langle \mathbf{P}_{t/2}f, \mathbf{P}_{t/2} f \rangle_{L^2 (\mu_\ph)} = \|\mathbf{P}_{t/2} f\|_{L^2(\mu_\ph)}^2 \ge 0.
$$
Thus, using Fubini's theorem,
\begin{align*}
\langle f, \mathbf{G}f \rangle_{L^2(\mu_\ph)} & = \int_\Sigma \mu_\ph(\dd x) f(x)  \int_0^\infty \mathbf{P}_t f(x) \dd t\\
& = \int_0^\infty \langle f, \mathbf{P}_t f \rangle_{L^2 (\mu_\ph)} \dd t \\
& = \int_0^\infty \| \mathbf{P}_{t/2} f \|_{L^2 (\mu_\ph)}^2 \dd t \ge 0,
\end{align*}
as desired. Thus $\mathbf{G}$ is a nonnegative operator. 

Furthermore suppose that $f \in L^2 ( \mu_\ph)$ and $\mathbf{G}f = 0$. Then by the above, $\| \mathbf{P}_{t/2} f \|^2 = 0$, for Lebesgue almost every $t\ge 0$. Letting $t \to 0$ (staying away from a Lebesgue null set) and using the strong continuity of the semigroup on $L^2 ( \mu_\ph)$ we deduce $f = 0$. 
\end{proof}

Note that, as a consequence of the Riesz--Schauder theorem, all eigenvalues $\bmu_n$ of $\mathbf{G}$ are finite-dimensional, i.e., the corresponding eigenspaces have finite dimension. With this in hand we can finally define the spectrum of Liouville quantum gravity simply by taking the reciprocal of $\bmu_n$.

\begin{defn} 
The eigenvalues  set $\bl_n$ of associated to the field $\ph$ are given by $\bl_n = 1/\bmu_n$. These eigenvalues are associated to an orthonormal decomposition of $L^2 (\mu_\ph)$, namely the basis $(\ef_n)_{n\ge 1}$ of eigenfunctions. 
\end{defn}

We note immediately that $\bl_n \to \infty$. Furthermore, since $\mathbf{G}$ is Hilbert-Schmidt we deduce that $\sum_n \bl_n^{-2} < \infty$, a.s. In the Weyl law below we will get an exact asymptotic rate (in probability) of divergence of $\bl_n $ towards infinity. Note that the eigenfunctions $\ef_n$ are a.s. continuous (or more precisely there exists a.s. a continuous version of $\ef_n$) since 
\begin{equation}\label{eq:EFreg}
\ef_n = \bl_n \mathbf{G}\ef_n, \quad \mu_\ph-a.e.
\end{equation}
The continuity of the right hand side follows from the fact that $\ef_n \in L^2 ( \mu_\ph)$, together with the fact that $g_\Sigma$ blows up only logarithmically, the uniform estimate in Lemma \ref{L:modcont}, and the dominated convergence theorem. Thus $\ef_n$ admits a continuous modification which is necessarily unique. We will denote by $\ef_n$ this continuous version in what follows.

We now claim that we can without loss of generality assume that the eigenfunctions are also eigenfunctions of $\mathbf{P}_t$ for every $t\ge 0$.

\begin{lemma}\label{L:GP_t}
 We may choose the orthornormal basis $(\ef_n)_{n\ge 1}$ in such a way that for all $n\ge 1$, $\mathbf{G} \ef_n = \bmu_n \ef_n$ and for all $t \ge 0$, $\mathbf{P}_t f_n = e^{ - \bl_n t} \ef_n$.
\end{lemma}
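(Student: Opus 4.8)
The plan is to exploit the representation $\mathbf{G} f = \int_0^\infty \mathbf{P}_t f\,\dd t$ in $L^2(\mu_\ph)$ (established in the proof of the preceding lemma, as a consequence of Lemma~\ref{L:occ}) together with elementary spectral theory on each of the finite–dimensional eigenspaces of $\mathbf{G}$. First I would record that $\mathbf{G}$ and every $\mathbf{P}_t$ commute. Since $\mathbf{P}_t$ is a bounded operator on $L^2(\mu_\ph)$ (indeed a contraction, being a symmetric Markovian semigroup), it may be pulled through the integral, and the semigroup property $\mathbf{P}_t\mathbf{P}_s=\mathbf{P}_{s+t}$ gives
\[
\mathbf{P}_t\mathbf{G}=\int_0^\infty \mathbf{P}_{t+s}\,\dd s=\int_t^\infty \mathbf{P}_u\,\dd u=\int_0^\infty \mathbf{P}_s\mathbf{P}_t\,\dd s=\mathbf{G}\mathbf{P}_t .
\]
Consequently, for each eigenvalue $\bmu$ of $\mathbf{G}$, the eigenspace $E_\bmu:=\ker(\mathbf{G}-\bmu I)$ — which is finite–dimensional by the Riesz--Schauder theorem — is invariant under every $\mathbf{P}_t$: if $\mathbf{G}f=\bmu f$ then $\mathbf{G}(\mathbf{P}_t f)=\mathbf{P}_t\mathbf{G}f=\bmu\,\mathbf{P}_t f$.

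Next I would diagonalise the semigroup on a fixed eigenspace $E=E_\bmu$. The family $(\mathbf{P}_t|_E)_{t\ge 0}$ is a strongly continuous semigroup of self-adjoint operators on the finite–dimensional Hilbert space $E$: strong continuity on $L^2(\mu_\ph)$ restricts to the closed invariant subspace $E$, and self-adjointness is \eqref{eq:Ptsym}. On a finite–dimensional space strong continuity is norm continuity, so there is a bounded generator $A=\lim_{t\downarrow 0} t^{-1}(\mathbf{P}_t|_E-I)$, which is self-adjoint as a norm-limit of self-adjoint operators, and $\mathbf{P}_t|_E=e^{tA}$ for all $t\ge 0$. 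Diagonalising $A$ produces an orthonormal basis of $E$ consisting of common eigenvectors of all the $\mathbf{P}_t$: if $A\ef=-\lambda\ef$ with $\ef\neq 0$ then $\mathbf{P}_t\ef=e^{-\lambda t}\ef$, while $\ef\in E$ gives $\mathbf{G}\ef=\bmu\ef$. Finally I would identify $\lambda$ by plugging these relations into $\mathbf{G}=\int_0^\infty \mathbf{P}_t\,\dd t$: this yields $\bmu\,\ef=\mathbf{G}\ef=\bigl(\int_0^\infty e^{-\lambda t}\,\dd t\bigr)\ef$, and since $\mathbf{G}\ef$ is a well-defined (finite, nonzero) element of $L^2(\mu_\ph)$ this forces $\lambda>0$ and $\lambda=1/\bmu=\bl$, where we use that $\bmu>0$ from the previous lemma. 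Hence $\mathbf{P}_t\ef=e^{-\bl t}\ef$.

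To conclude I would run this construction on each eigenspace $E_{\bmu_n}$ and concatenate the resulting orthonormal bases: the eigenspaces are mutually orthogonal and their closed span is all of $L^2(\mu_\ph)$ (because $\mathbf{G}$ is compact, self-adjoint, and injective, the last point by the previous lemma), so one obtains an orthonormal basis $(\ef_n)_{n\ge 1}$ of $L^2(\mu_\ph)$ along which $\mathbf{G}\ef_n=\bmu_n\ef_n$ and $\mathbf{P}_t\ef_n=e^{-\bl_n t}\ef_n$ for every $t\ge 0$, as claimed; everything takes place on the almost sure event of Section~\ref{SS:Green_spectrum} on which $\mathbf{G}$ and $(\mathbf{P}_t)$ are defined with the stated properties. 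The only genuinely delicate point is the passage to the generator on $E$ — one must know that the \emph{a priori} merely strongly continuous restricted semigroup is norm-continuous (automatic in finite dimension) so that $A$ exists as a bounded self-adjoint operator and $\mathbf{P}_t|_E=\exp(tA)$; everything else is routine. An alternative sidestepping this would be to invoke that the $L^2(\mu_\ph)$–generator $L$ of $(\mathbf{P}_t)$ is self-adjoint with $\mathbf{G}=(-L)^{-1}$ and read the statement off the spectral theorem for $L$, but since we have chosen not to develop the Dirichlet-form description of $L$, the eigenspace-by-eigenspace argument seems preferable.
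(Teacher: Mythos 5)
Your proposal is correct and follows essentially the same route as the paper: both use $\mathbf{G}=\int_0^\infty \mathbf{P}_t\,\dd t$ to show that $\mathbf{G}$ and $\mathbf{P}_t$ commute, restrict the semigroup to the finite-dimensional eigenspaces $E_{\bmu}$, diagonalise there, and identify $\bmu=\bl^{-1}$ by integrating. The only (cosmetic) difference is in how the exponential form is extracted on $E_{\bmu}$: the paper simultaneously diagonalises the commuting self-adjoint family $\{\mathbf{P}_t|_{E_{\bmu}}\}$ and solves the Cauchy functional equation $\bl(t+s)=\bl(t)\bl(s)$, whereas you pass to the bounded self-adjoint generator $A$ of the norm-continuous restricted semigroup and diagonalise $A$; both are valid.
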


\begin{proof} Recall that for $f \in L^2 ( \mu_\ph)$ we have $\mathbf{G} f= \int_0^\infty \mathbf{P}_t f \dd t$. Since $\mathbf{P}_t: L^2 ( \mu_\ph) \to L^2 ( \mu_\ph)$, we have $\mathbf{G} \mathbf{P_t} f = \mathbf{P}_t \mathbf{G} f$ so that $\mathbf{G}$ and $\mathbf{P}_t$ commute. Suppose $\bmu$ is an eigenvalue of $\mathbf{G}$ and let $E_{\bmu} = \ker ( \mathbf{G} - \bmu I)$ denote the corresponding eigenspace. Let $f \in E_{\bmu}$. Then 
$$
\mathbf{G} \mathbf{P_t} f = \mathbf{P}_t \mathbf{G} f = \bmu \mathbf{P}_t f. 
$$
Thus $\mathbf{P}_t : E_{\bmu} \to E_{\bmu}$ leaves this subspace invariant. Now consider the restriction $\mathbf{P}_t|_{E_{\bmu}}$ of each $\mathbf{P}_t $ to the finite-dimensional subspace $E_{\bmu}$. This is a linear operator on a finite-dimensional space, which is symmetric (because it is symmetric in the full space thanks to \eqref{eq:Ptsym}) and thus diagonalisable. As $ \mathbf{P}_t$ commutes with $\mathbf{P}_s$ for every $s,t\ge 0$ and $E_{\bmu}$ is finite-dimensional, we can find a common basis of $E_{\bmu}$ which diagonalises each $\mathbf{P}_t|_{E_{\bmu}}, t\ge 0$. Let $\ef$ be one of the elements of this basis, and let $\bl(t)$ be such that $\mathbf{P}_t \ef = \bl(t) \ef$. By the semigroup property we have $\bl(t+s) = \bl(t) \bl(s)$ for all $s \ge 0$, furthermore (by strong continuity of the semigroup) $t\mapsto \bl(t)$ must be continuous. Thus $\bl(t) = e^{ - \bl t }$ for some $\bl \in \R$.  Finally, from the fact that
$$
\bmu \ef = \mathbf{G} \ef = \int_0^\infty \mathbf{P}_t \ef \ \dd t 
$$
we deduce that $\bl>0$ and $\bmu = \bl^{-1}$. 
\end{proof}

\subsection{Heat kernel, bridge identity and heat trace formula}

The definition of the spectrum is somewhat indirect in that so far it is not completely clear how to do computations involving the eigenvalues. Our main tool to do this will be the so-called \textbf{heat trace formula} which connects the eigenvalues to the \textbf{heat kernel}, i.e., the density of the law of Liouville Brownian motion $\mathbf{Z}_t$ at time $t\ge 0$, with respect to the Liouville measure $\mu_\ph ( \dd y )$. It is a priori not entirely obvious that such a density exists, and this was first shown in \cite{GRV_hk}:

\begin{prop} There a.s. exists a family $(\mathbf{p}_t (\cdot ,\cdot))_{t\ge 0}$ of (jointly in all three arguments) measurable functions such that for all $t \ge 0$, for all $x \in \Sigma$,
 $$\mathbf{P}_t f(x) = \int_\Sigma \mathbf{p}_t (x,y) f(y) \mu_{\ph} (\dd y).$$ This family of functions $\mathbf{p}_t (x,y)$ (a priori only defined for all $t\ge 0$, for all $x\in \Sigma$ and $\mu_\ph-$almost every $y \in \Sigma$ is called the \textbf{Liouville heat kernel} and satisfies the following properties: 
\begin{enumerate}
\item (Nonnegativity) For all $t \ge 0$, for all $x \in \Sigma$ and for $\mu_\ph-$almost every $y\in \Sigma$, $\mathbf{p}_t (x,y) \ge 0$. 
\item (Symmetry) For all $t \ge 0$, for all $x,  y \in \Sigma$, $\mathbf{p}_t (x,y) = \mathbf{p}_t (y,x) $.
\item (Semigroup) For all $s, t \ge 0$, for all $x \in \Sigma$ and for $\mu_\ph-$almost every $y \in \Sigma$, $\mathbf{p}_{t+s} (x, y) = \int_\Sigma \mathbf{p}_t(x,z) \mathbf{p}_t(z,y) \mu_\ph(\dd z)$. 
\end{enumerate}
\end{prop}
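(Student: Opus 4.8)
The plan is to construct $\mathbf{p}_t$ explicitly from the spectral data, as
\[
  \mathbf{p}_t(x,y)\;:=\;\sum_{n\ge 1} e^{-\bl_n t}\,\ef_n(x)\,\ef_n(y),
  \qquad t>0,
\]
where $(\ef_n)_{n\ge 1}$ is the orthonormal basis of continuous eigenfunctions and $\bl_n=1/\bmu_n\to\infty$ the eigenvalues from Lemma~\ref{L:GP_t}. (At $t=0$ this series diverges; as usual the stated identity is meant for $t>0$, since $\mathbf{P}_0$ is the identity, which has no integral kernel against the non-atomic measure $\mu_\ph$.) The key quantitative input is a uniform sup-norm bound on eigenfunctions. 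Since $\ef_n=\bl_n\mathbf{G}\ef_n$ holds $\mu_\ph$-a.e.\ by \eqref{eq:EFreg}, both sides have continuous versions, and $\mu_\ph$ a.s.\ has full topological support on $\Sigma$ (a standard property of Gaussian multiplicative chaos), these two continuous functions coincide everywhere; hence $\ef_n(x)=\bl_n\int_\Sigma g_\Sigma(x,y)\ef_n(y)\,\mu_\ph(\dd y)$ for \emph{all} $x\in\Sigma$. Cauchy--Schwarz together with $\|\ef_n\|_{L^2(\mu_\ph)}=1$ then yields $|\ef_n(x)|\le C\,\bl_n$ for all $x\in\Sigma$ and all $n$, where $C^2:=\sup_{x\in\Sigma}\int_\Sigma g_\Sigma(x,y)^2\,\mu_\ph(\dd y)<\infty$ almost surely by Lemma~\ref{L:modcont} and the logarithmic divergence \eqref{eq:Green}.

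Now fix $t>0$. Since $\bl_n\to\infty$ and $\sum_n\bl_n^{-2}<\infty$ (because $\mathbf{G}$ is Hilbert--Schmidt), both $\bl_n^{4}e^{-\bl_n t}$ and $\bl_n^{3}e^{-\bl_n t}$ are bounded, so $e^{-\bl_n t}\bl_n^{2}\le c_t\bl_n^{-2}$ and $e^{-\bl_n t}\bl_n\le c_t\bl_n^{-2}$ for a suitable finite $c_t$; hence $\sum_n e^{-\bl_n t}\bl_n^{2}<\infty$ and $\sum_n e^{-\bl_n t}\bl_n<\infty$. Combined with the eigenfunction bound this gives $\sum_{n}e^{-\bl_n t}\,|\ef_n(x)\ef_n(y)|\le C^2\sum_n e^{-\bl_n t}\bl_n^{2}<\infty$, \emph{uniformly} in $x,y\in\Sigma$ and uniformly for $t$ in any $[t_0,\infty)$ with $t_0>0$. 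Therefore the series defining $\mathbf{p}_t$ converges absolutely and locally uniformly, so $(t,x,y)\mapsto\mathbf{p}_t(x,y)$ is continuous on $(0,\infty)\times\Sigma\times\Sigma$, in particular jointly measurable; and symmetry $\mathbf{p}_t(x,y)=\mathbf{p}_t(y,x)$ is immediate from the formula (moreover the partial sums over each finite-dimensional $\mathbf{G}$-eigenspace are the kernels of orthogonal projections, so $\mathbf{p}_t$ does not depend on the basis chosen within an eigenspace). One may note in passing that this amounts to $\mathbf{P}_t$ itself being Hilbert--Schmidt on $L^2(\mu_\ph)$ for every $t>0$, with $\|\mathbf{P}_t\|_{\mathrm{HS}}^2=\sum_n e^{-2\bl_n t}<\infty$.

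It remains to identify $\mathbf{p}_t$ with the heat kernel. For any $f\in L^1(\mu_\ph)$ the uniform bound lets us exchange sum and integral (Fubini--Tonelli, using $\sum_n e^{-\bl_n t}\bl_n<\infty$ and $\int_\Sigma|\ef_n f|\,\dd\mu_\ph\le C\bl_n\|f\|_{L^1(\mu_\ph)}$), so $\int_\Sigma\mathbf{p}_t(x,y)f(y)\,\mu_\ph(\dd y)=\sum_n e^{-\bl_n t}\langle f,\ef_n\rangle_{L^2(\mu_\ph)}\,\ef_n(x)=:g(x)$ for every $x\in\Sigma$, and $g$ is continuous (Weierstrass $M$-test again). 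On the other hand, expanding $f\in L^2(\mu_\ph)$ along $(\ef_n)$, applying the bounded operator $\mathbf{P}_t$, and using $\mathbf{P}_t\ef_n=e^{-\bl_n t}\ef_n$ (Lemma~\ref{L:GP_t}) shows $\mathbf{P}_t f=g$ in $L^2(\mu_\ph)$, hence $\mu_\ph$-a.e. If moreover $f$ is bounded, then $\mathbf{P}_t f$ is continuous by the strong Feller property, so $\mathbf{P}_t f=g$ \emph{everywhere} on $\Sigma$ by full support of $\mu_\ph$; this is the representation for bounded Borel $f$. Choosing $f\ge0$ bounded gives $\int_\Sigma\mathbf{p}_t(x,y)f(y)\,\mu_\ph(\dd y)=\mathbf{E}_x[f(\mathbf{Z}_t)]\ge0$ for all such $f$, whence $\mathbf{p}_t(x,\cdot)\ge0$ $\mu_\ph$-a.e.\ (indeed everywhere, by continuity and full support), which is nonnegativity. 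The representation then extends to all nonnegative Borel $f$ by monotone convergence on both sides (using $\mathbf{p}_t\ge0$), and to all $f\in L^1(\mu_\ph)$ by linearity. Finally the Chapman--Kolmogorov / semigroup identity $\mathbf{p}_{t+s}(x,y)=\int_\Sigma\mathbf{p}_t(x,z)\mathbf{p}_s(z,y)\,\mu_\ph(\dd z)$ follows by inserting the two series, exchanging the (absolutely convergent) double sum with the integral, and using $\int_\Sigma\ef_n\ef_m\,\dd\mu_\ph=\delta_{nm}$.

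The step needing the most care is the upgrade from ``$\mu_\ph$-almost every $x$'' to ``every $x\in\Sigma$'' — both for the eigenfunction identity \eqref{eq:EFreg} and for the kernel representation of $\mathbf{P}_t$ — which is exactly where the strong Feller regularisation of $\mathbf{P}_t$ must be combined with the (a.s.) full topological support of $\mu_\ph$. Granting that, everything else (the uniform eigenfunction bound, the summability of $e^{-\bl_n t}\bl_n^{2}$, and the interchanges of sums and integrals) is routine, all carried out on the single probability-one event on which Liouville Brownian motion is strongly Feller from every starting point, $\mu_\ph$ has full support, $\sup_{x}\int_\Sigma g_\Sigma(x,\cdot)^2\,\dd\mu_\ph<\infty$, and $\sum_n\bl_n^{-2}<\infty$.
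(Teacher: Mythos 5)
Your proof is correct, but it takes a genuinely different route from the paper: the text treats this proposition as imported background, citing Theorem~1.4 of \cite{GRV_hk}, where existence of the density is a ``soft'' consequence of the strong Feller property of the resolvent together with general Dirichlet-form theory (Theorems~4.1.2 and~4.2.4 in \cite{Fukushima}); the spectral decomposition and joint continuity are then derived \emph{afterwards} (Theorem~\ref{T:spectraldec}), using the a.e.\ uniqueness of the already-existing density and an external ultracontractivity input. You reverse the logic: you \emph{define} $\mathbf{p}_t$ by the eigenfunction series and verify directly that it is a density for $\mathbf{P}_t$. This is legitimate and non-circular, since everything you use (the Hilbert--Schmidt property of $\mathbf{G}$, Lemma~\ref{L:GP_t}, the continuous versions of $\ef_n$ via \eqref{eq:EFreg}, $\sup_x\int g_\Sigma(x,\cdot)^2\,\dd\mu_\ph<\infty$, the strong Feller property) is established in the paper before and independently of the heat kernel. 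The payoff of your route is that it proves the proposition and Theorem~\ref{T:spectraldec} simultaneously, and your pointwise bound $|\ef_n(x)|\le C\bl_n$ gives the uniform convergence of the series (hence a crude but sufficient ultracontractivity estimate $\mathbf{p}_t\le C^2\sum_n\bl_n^2e^{-\bl_n t}$) without appealing to \cite{AndresKajino} or \cite{Davies}; the cost is that you must separately justify the a.s.\ full topological support of $\mu_\ph$ to upgrade ``$\mu_\ph$-a.e.'' identities between continuous functions to ``everywhere'' --- a standard GMC fact, but one the paper never states. Two minor remarks: you are right that the statement cannot hold at $t=0$ (no density for the identity against a non-atomic measure), so your restriction to $t>0$ is a correction of the statement rather than a gap; and your construction delivers symmetry and nonnegativity \emph{everywhere} rather than only $\mu_\ph$-a.e., which is slightly stronger than claimed.
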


See Theorem 1.4 in \cite{GRV_hk}. This is a rather soft consequence of the fact that the resolvent operator is strong Feller (just as the semigroup is) and arguments from Dirichlet form theory, more precisely Theorems 4.1.2 and 4.2.4 in \cite{Fukushima}.

\medskip Handling the heat kernel directly is in itself very difficult; there is no formula relating directly to the standard (Riemannian) heat kernel. Related to this is the fact that, at least intuitively, the heat kernel encodes in a rather direct way much of the complex, multifractal geometry of LQG: for instance, if $x \neq y$ and $t\to 0$, it seems intuitively clear that $\mathbf{p}_t(x,y)$ is carried by trajectories that stay close to geodesics between $x$ and $y$. Despite this complexity, there is an easy but remarkably useful formula, the \textbf{bridge identity} which allows us to access time integrals of the heat kernel. This identity expresses the fact that Liouville Brownian motion is a time-change of ordinary Brownian motion and can thus ultimately be seen as a consequence of conformal invariance of Brownian motion in two dimensions. It is one of the reasons why it is possible to analyse the spectral geometry of LQG further than for some other models of random geometry. 

\begin{theorem}\label{T:bridge}
Let $\psi: [0, \infty) \to [0, \infty)$ be a nonnegative Borel function. Then for all $x\in \Sigma$, for $\mu_\ph-$a.e. $y \in \Sigma$, 
\begin{equation}
\int_0^\infty \psi(t) \mathbf{p}_t (x,y)\dd t  = \int_0^\infty  \bdec{x}{y}{t}  [ \psi( F(t))] p^\Sigma_t(x,y) \dd t ,
\end{equation}
where $ \bdec{x}{y}{t}$ denotes expectation with respect to the law of an ordinary Brownian bridge $(b_s)_{0\le s \le t}$ from $x$ to $y$ of duration $t$ (independent of the realisation of the underlying field $\ph$), $F(t) = \lim_{\eps \to 0} \eps^{\gamma^2/2}\int_0^t e^{\gamma \ph_\eps( b_s)} \dd s$ is the quantum clock appearing in the definition of Liouville Brownian motion, and $p_t^\Sigma(x,y)$ is the ordinary transition probabilities of Brownian motion killed when it leaves $\Sigma$ (thus $p_t^\Sigma(x,\cdot)$ is a subprobablity kernel). 
\end{theorem}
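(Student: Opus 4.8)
The plan is to start from the occupation-type representation of the heat kernel and systematically "undo" the time change, writing the Liouville semigroup in terms of the ordinary Brownian motion and then decomposing over the endpoint. First I would recall that for $f$ continuous and bounded, $\mathbf{P}_t f(x) = \mathbf{E}_x[f(\mathbf{Z}_t)] = \mathbf{E}_x[f(X_{F^{-1}(t)})]$, where $X$ is ordinary Brownian motion killed upon leaving $\Sigma$ and $F$ is the quantum clock along the trajectory of $X$. The key algebraic move is the substitution $t = F(s)$, i.e. $s = F^{-1}(t)$: since $F$ is a.s. continuous and strictly increasing (for $\gamma<2$), this is a legitimate change of variables, giving
\begin{equation}\label{eq:bridgeproof1}
\int_0^\infty \psi(t)\, \mathbf{P}_t f(x)\, \dd t = \mathbf{E}_x\!\left[\int_0^\infty \psi(t)\, f(X_{F^{-1}(t)})\, \dd t\right] = \mathbf{E}_x\!\left[\int_0^{T_\Sigma} \psi(F(s))\, f(X_s)\, \dd F(s)\right],
\end{equation}
where $T_\Sigma$ is the exit time of $X$. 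As in the proof of Lemma \ref{L:occ}, I would make this rigorous by first working with the $\eps$-regularised clock $F_\eps$ and process $Z^\eps$, using the portmanteau/continuity arguments there together with the uniform integrability bound on $(\mathbf{T}^\eps_\Sigma)^p$ from \cite[Proposition 3.37]{BP}, and only at the end passing $\eps\to 0$.

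Next I would expand $\dd F(s) = \lim_{\eps\to0}\eps^{\gamma^2/2} e^{\gamma\ph_\eps(X_s)}\dd s$ and write the right side of \eqref{eq:bridgeproof1} as
\begin{equation}\label{eq:bridgeproof2}
\lim_{\eps\to0}\int_0^\infty \mathbf{E}_x\!\left[\psi(F_\eps(s))\, f(X_s)\, \eps^{\gamma^2/2} e^{\gamma\ph_\eps(X_s)}\, \mathbbm{1}_{s<T_\Sigma}\right]\dd s.
\end{equation}
Now for fixed $s$ I would disintegrate the law of $X$ (killed on leaving $\Sigma$) according to the position $X_s = y$: conditionally on $X_s = y$ and $s<T_\Sigma$, the path $(X_r)_{0\le r\le s}$ is precisely an ordinary Brownian bridge of duration $s$ from $x$ to $y$ \emph{conditioned to stay in $\Sigma$}; the unconditional density of $X_s$ at $y$ (on $\{s<T_\Sigma\}$) is $p^\Sigma_s(x,y)\,\dd y$. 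Since $F_\eps(s)$ and $e^{\gamma\ph_\eps(X_s)}$ depend only on $(X_r)_{0\le r\le s}$, this yields for each $s$
\begin{equation}\label{eq:bridgeproof3}
\mathbf{E}_x\!\left[\psi(F_\eps(s)) f(X_s) \eps^{\gamma^2/2} e^{\gamma\ph_\eps(X_s)}\mathbbm{1}_{s<T_\Sigma}\right] = \int_\Sigma f(y)\, \bdec{x}{y}{s}\!\left[\psi(F_\eps(s))\,\eps^{\gamma^2/2} e^{\gamma\ph_\eps(b_s)}\right] p^\Sigma_s(x,y)\,\dd y,
\end{equation}
where $b$ is the (unconditioned) Brownian bridge; the killing is absorbed into $p^\Sigma_s(x,y)$ because on $\{s<T_\Sigma\}$ one can use the standard bridge, at the (harmless) cost of the reflection/killing term already encoded in $p^\Sigma_s$. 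Comparing \eqref{eq:bridgeproof3} with \eqref{eq:GMC}–type convergence, the inner expectation converges as $\eps\to0$ to $\bdec{x}{y}{s}[\psi(F(s))]\, \dd s$ against the bridge clock, because the quantum clock along a Brownian bridge is itself a GMC-type limit and the factor $\eps^{\gamma^2/2}e^{\gamma\ph_\eps(b_s)}\dd s$ is exactly the increment $\dd F(s)$; integrating in $s$ and identifying $\int_0^\infty \psi(t)\mathbf{p}_t(x,y)\dd t = \int_0^\infty \mathbf{P}_t f(x)\dd t$-style against $f(y)\mu_\ph(\dd y)$ gives the claimed formula for $\mu_\ph$-a.e.\ $y$ (the a.e.\ qualifier being inherited from the fact that $\mathbf{p}_t(x,\cdot)$ is itself only defined up to $\mu_\ph$-null sets).

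The main obstacle, and where I would spend most of the care, is the exchange of limits in \eqref{eq:bridgeproof2}–\eqref{eq:bridgeproof3}: one must justify pulling $\lim_{\eps\to0}$ inside the $s$-integral and inside the bridge expectation $\bdec{x}{y}{s}$, simultaneously handling the small-$s$ singularity of $p^\Sigma_s(x,y)$ near $y=x$ and the a.s.\ (but not $L^\infty$) control of the GMC clock. I expect this to go through by a combination of: (i) a uniform-in-$\eps$ $L^p$ bound (some $p>1$) on the bridge clock increments, of the same flavour as Lemma \ref{L:modcont} and \cite[Prop. 3.37]{BP}, giving uniform integrability; (ii) monotone/dominated convergence using the logarithmic blow-up of $g_\Sigma$ (equivalently $\int_0^\infty p^\Sigma_s(x,y)\dd s = g_\Sigma(x,y)$) exactly as in the final step of the proof of Lemma \ref{L:occ}; and (iii) Fubini to re-assemble the $s$-integral after the limit. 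A secondary, more bookkeeping-level point is to verify that conditioning the killed process on its time-$s$ position really does produce the \emph{free} Brownian bridge weighted by $p^\Sigma_s$, rather than a bridge conditioned to avoid $\partial\Sigma$; this is standard (it is the statement that $p^\Sigma_t(x,y) = p_t(x,y)\,\mathbf{P}^{\mathrm{bridge}}_{x\to y,t}(\text{stay in }\Sigma)$ and one keeps the conditioning event inside the expectation), but it should be stated explicitly to avoid confusion.
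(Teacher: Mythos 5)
Your argument is precisely the one the paper has in mind: its ``proof'' of Theorem \ref{T:bridge} is the one-line remark that the identity follows from the time-change definition of Liouville Brownian motion and the definition of the Liouville heat kernel (with references to \cite{RVspectral, HKPZ, MRVZ, BW}), and your substitution $t=F(s)$ followed by disintegration over the endpoint $X_s=y$ is exactly that computation written out, with the correct identification of the endpoint weight $\eps^{\gamma^2/2}e^{\gamma\ph_\eps(y)}\,\dd y$ with the reference measure $\mu_\ph(\dd y)$. The only blemish is that your third display pairs the \emph{free} bridge expectation (without the survival indicator $1_{\{\tau_\Sigma>s\}}$) with the \emph{killed} kernel $p^\Sigma_s(x,y)$, which is not literally consistent --- one needs either the free kernel with the indicator or the conditioned bridge with $p^\Sigma_s$ --- but you flag and resolve this correctly in your closing paragraph, and the paper's own statement (compare \eqref{eq:goalbridge}, where the indicator reappears) is written with the same looseness.
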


The proof is an easy consequence of the definition of Liouville Brownian motion as a time change of ordinary Brownian motion, and the definition of the Liouville heat kernel. See \cite{RVspectral, HKPZ, MRVZ, BW}. Some continuity properties of the integrated heat kernel can easily be deduced from this. 

\medskip The main result of this section is the following spectral decomposition of the heat kernel, which will lead us to the desired trace formula. 

\begin{theorem}[Spectral decomposition of Liouville heat kernel]\label{T:spectraldec}
For all $t\ge 0$, all $x \in \Sigma$, almost surely and for $\mu_\ph-$almost every $y \in \Sigma$ we have
\begin{equation}\label{eq:spectraldec}
\mathbf{p}_t (x,y) = \sum_{n=1}^\infty e^{-\bl_n t} \ef_n(x) \ef_n(y). 
\end{equation}
The series on the right hand side converges a.s. uniformly on $(\eps, \infty) \times \Sigma \times \Sigma$ for any $\eps>0$. As a result, there is a (necessarily unique) \emph{jointly continuous} version of the Liouville heat kernel $\mathbf{p}_t(x,y)$. 
\end{theorem}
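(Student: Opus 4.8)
The plan is to introduce the candidate kernel $q_t(x,y):=\sum_{n\ge 1}e^{-\bl_n t}\ef_n(x)\ef_n(y)$, to show directly that this series converges absolutely and uniformly on $(\eps,\infty)\times\Sigma\times\Sigma$ for every $\eps>0$ — so that $q_t(x,y)$ is jointly continuous — and then to identify $q_t$ with the heat kernel $\mathbf{p}_t$ by testing both against a sufficiently rich class of functions.

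For the convergence, the one quantitative ingredient I would establish first is a pointwise bound on the eigenfunctions. From \eqref{eq:EFreg} one has $\ef_n(x)=\bl_n\int_\Sigma g_\Sigma(x,y)\ef_n(y)\,\mu_\ph(\dd y)$ for the continuous version of $\ef_n$, so by Cauchy--Schwarz $\|\ef_n\|_\infty\le C\bl_n$ with $C:=\sup_{x\in\Sigma}\|g_\Sigma(x,\cdot)\|_{L^2(\mu_\ph)}$, and $C<\infty$ almost surely by Lemma~\ref{L:modcont} together with the logarithmic divergence \eqref{eq:Green} of $g_\Sigma$ (this is exactly the estimate already used to show $\mathbf{G}$ is Hilbert--Schmidt). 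Since $\mathbf{G}$ is Hilbert--Schmidt we have $\sum_n\bl_n^{-2}<\infty$, and since $\bl_n\to\infty$ this gives $\sum_n e^{-\bl_n\eps}\bl_n^2<\infty$ for every $\eps>0$ (because $e^{-\bl_n\eps}\bl_n^4\to 0$, so $e^{-\bl_n\eps}\bl_n^2\le\bl_n^{-2}$ for all but finitely many $n$). For $t\ge\eps$ we may then bound $|e^{-\bl_n t}\ef_n(x)\ef_n(y)|\le C^2 e^{-\bl_n\eps}\bl_n^2$, with a summable right-hand side, so the Weierstrass $M$-test delivers the claimed absolute and uniform convergence, and $q_t$ is jointly continuous as a uniform limit of continuous functions.

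The second half is to show $\mathbf{p}_t(x,\cdot)=q_t(x,\cdot)$ in $L^2(\mu_\ph)$ for every $x\in\Sigma$ and every $t>0$, on the almost sure event carrying all of the above. Fix $f\in C_b(\Sigma)$. By Lemma~\ref{L:GP_t} and the spectral theorem, $\mathbf{P}_t f=\sum_n e^{-\bl_n t}\langle f,\ef_n\rangle_{L^2(\mu_\ph)}\ef_n$ with convergence in $L^2(\mu_\ph)$; by the eigenfunction bound (and Cauchy--Schwarz in $n$) the partial sums also converge uniformly in $x$ to a continuous function $g$, so $g=\mathbf{P}_t f$ $\mu_\ph$-almost everywhere (uniform convergence implies $L^2(\mu_\ph)$ convergence, $\mu_\ph$ being finite). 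But $x\mapsto\mathbf{P}_t f(x)=\int_\Sigma \mathbf{p}_t(x,y)f(y)\,\mu_\ph(\dd y)$ is continuous (Liouville Brownian motion is Feller), $g$ is continuous, and $\mu_\ph$ has full support almost surely; hence $\mathbf{P}_t f(x)=g(x)$ for \emph{every} $x$. Interchanging sum and integral — legitimate since $\sum_n e^{-\bl_n t}\ef_n(x)\ef_n(y)$ converges uniformly in $y$ and $f$ is bounded with $\mu_\ph$ finite — rewrites this as $\int_\Sigma \mathbf{p}_t(x,y)f(y)\,\mu_\ph(\dd y)=\int_\Sigma q_t(x,y)f(y)\,\mu_\ph(\dd y)$, valid for all $x\in\Sigma$ and all $f\in C_b(\Sigma)$. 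Since $\mathbf{p}_t(x,\cdot)$ (with $\int_\Sigma\mathbf{p}_t(x,y)\mu_\ph(\dd y)=\mathbf{P}_x(\mathbf{T}_\Sigma>t)\le 1$) and $q_t(x,\cdot)$ are both in $L^1(\mu_\ph)$, and a finite signed measure on $\Sigma$ is determined by its integrals against bounded continuous functions, this forces $\mathbf{p}_t(x,\cdot)=q_t(x,\cdot)$ $\mu_\ph$-a.e., for every $x$. That is \eqref{eq:spectraldec}; and since $q_t$ is jointly continuous it is the (necessarily unique) jointly continuous version of the Liouville heat kernel.

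I expect the series estimates to be essentially routine once the linear-in-$\bl_n$ bound on $\|\ef_n\|_\infty$ is in hand; the genuinely delicate point is the bookkeeping in the last step, where the heat kernel is only defined $\mu_\ph$-almost everywhere in $y$ whereas \eqref{eq:spectraldec} must hold for \emph{every} $x$ — it is precisely there that the Feller property of Liouville Brownian motion and the almost sure full support of the Liouville measure enter. The structural feature worth keeping in mind is that the only a priori decay of the eigenvalues is the polynomial bound $\sum_n\bl_n^{-2}<\infty$ from Hilbert--Schmidtness; for any fixed $t>0$, however, the factor $e^{-\bl_n t}$ decays faster than every power of $\bl_n$ and so comfortably absorbs the at most linear growth of $\|\ef_n\|_\infty$.
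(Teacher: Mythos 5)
Your argument is correct, and for the uniform-convergence half it takes a genuinely different and more self-contained route than the paper. The paper establishes the identity \eqref{eq:spectraldec} exactly as you do in spirit — expand $\mathbf{P}_t f$ over the eigenbasis using Lemma \ref{L:GP_t} and invoke the a.e.\ uniqueness of the density of $\mathbf{Z}_t$ with respect to $\mu_\ph$ — but it obtains the a.s.\ uniform convergence of the series on $(\eps,\infty)\times\Sigma\times\Sigma$ by citing ultracontractivity of the semigroup together with general Dirichlet-form results of Davies. You instead extract the elementary bound $\|\ef_n\|_\infty \le C\,\bl_n$ directly from the Green-operator representation $\ef_n=\bl_n\mathbf{G}\ef_n$, Cauchy--Schwarz, and $\sup_x\|g_\Sigma(x,\cdot)\|_{L^2(\mu_\ph)}<\infty$, and then combine it with $\sum_n\bl_n^{-2}<\infty$ (Hilbert--Schmidtness) and the superpolynomial decay of $e^{-\bl_n\eps}$; this avoids any appeal to ultracontractivity and keeps the whole proof within the toolkit already set up in Section \ref{SS:Green_spectrum}. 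You are also more explicit than the paper about why the identity holds for \emph{every} $x$ rather than $\mu_\ph$-a.e.\ $x$: the paper's chain of equalities for $\mathbf{P}_t f$ is a priori only an $L^2(\mu_\ph)$ identity, and your use of the strong Feller property plus the (standard, though unstated in the paper) a.s.\ full support of $\mu_\ph$ is precisely what upgrades it. The trade-off is that the ultracontractivity route yields quantitatively stronger sup-norm control on the eigenfunctions (useful elsewhere), whereas your linear-in-$\bl_n$ bound is exactly what is needed here and no more.
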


See (5.10) in \cite{AndresKajino} and Theorem 3.3 in \cite{MRVZ} (although the mode of convergence of the series is not discussed there, and continuity of the left hand side appears to be taken for granted). 

\begin{proof}
Let $f \in L^2 ( \mu_\ph)$. Let us write $f = \sum_{n=1}^\infty c_n \ef_n$, with $c_n = \langle f, \ef_n\rangle_{L^2(\mu_\ph)}$. Then from Lemma \ref{L:GP_t} we see that 
\begin{align*}
\mathbf{P}_t f (x) & = \sum_{n=1}^\infty c_n \mathbf{P}_t \ef_n (x) \\
& = \sum_{n=1}^\infty c_n e^{- \bl_n t} \ef_n(x) \\
& = \int_\Sigma \left(\sum_{n=1}^\infty e^{ - \bl_n t} \ef_n(x) \ef_n(y) \right) f(y) \mu_\ph(\dd y) .
\end{align*}
We conclude \eqref{eq:spectraldec} thanks to the almost everywhere uniqueness of the density of the law of $\mathbf{Z}_t$ with respect to $\mu_\ph$. 

The almost sure uniform convergence of the series on $(\eps, \infty) \times \Sigma \times \Sigma$ can be seen as a consequence of ultracontractivity of the semigroup (Proposition 5.3 in \cite{AndresKajino}) and some general results in Dirichlet form theory (Theorem 2.1.4 in \cite{Davies}). 
\end{proof}

For us, the most interesting consequence of the above spectral decomposition arises from letting $x = y$ and integrating over $x$ (letting $x=y$ is now meaningful since we work with the continuous modification of $\mathbf{p}_t (x,y)$ provided by Theorem \ref{T:spectraldec}). The resulting formula is called the \textbf{heat trace formula}. 

\begin{corollary}\label{C:heattrace}
We have for every $t>0$, 
\begin{equation}\label{eq:heattrace}
\int_\Sigma \mathbf{p}_t(x,x) \mu_\ph(\mathrm{d}x) = \sum_{n=1}^\infty e^{- \bl_n t}
\end{equation}
\end{corollary}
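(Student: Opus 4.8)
The plan is to integrate the spectral decomposition of the heat kernel from Theorem~\ref{T:spectraldec} over the diagonal against $\mu_\ph$. Fix $t>0$. By Theorem~\ref{T:spectraldec}, the series $\sum_{n=1}^\infty e^{-\bl_n s}\ef_n(x)\ef_n(y)$ converges a.s.\ uniformly on $(\eps,\infty)\times\Sigma\times\Sigma$ for any $\eps\in(0,t)$, and its sum is the (jointly continuous) version of $\mathbf{p}_s(x,y)$. In particular, evaluating at $y=x$ and $s=t$ is legitimate for the continuous modification, giving the pointwise identity $\mathbf{p}_t(x,x)=\sum_{n=1}^\infty e^{-\bl_n t}\ef_n(x)^2$ for every $x\in\Sigma$, with the series converging uniformly in $x$ (restricting the uniform convergence statement to the diagonal at the fixed time $t>\eps$). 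Each term $e^{-\bl_n t}\ef_n(x)^2$ is nonnegative and continuous, so the partial sums increase to a continuous limit; uniform convergence of a series of nonnegative continuous functions on the compact-closure domain $\Sigma$ also shows directly that $x\mapsto\mathbf{p}_t(x,x)$ is bounded.

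Next I would integrate both sides against the finite measure $\mu_\ph$ over $\Sigma$. On the right, since the terms are nonnegative and measurable, the monotone convergence theorem (equivalently, Tonelli) lets us interchange sum and integral:
\begin{equation*}
\int_\Sigma \mathbf{p}_t(x,x)\,\mu_\ph(\dd x) = \sum_{n=1}^\infty e^{-\bl_n t}\int_\Sigma \ef_n(x)^2\,\mu_\ph(\dd x) = \sum_{n=1}^\infty e^{-\bl_n t},
\end{equation*}
where the last equality uses that $(\ef_n)_{n\ge 1}$ is an orthonormal basis of $L^2(\mu_\ph)$, so $\int_\Sigma\ef_n^2\,\dd\mu_\ph=\|\ef_n\|_{L^2(\mu_\ph)}^2=1$. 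One should also observe that $\mu_\ph(\Sigma)<\infty$ a.s.\ (indeed $\mu_\ph$ is a finite measure by Lemma~\ref{L:modcont}), so that interchanging the finite integral with the uniformly convergent series is in any case justified without appeal to monotonicity, should one prefer that route. This yields \eqref{eq:heattrace}.

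The only genuine point to be careful about, rather than a real obstacle, is that Theorem~\ref{T:spectraldec} a priori gives the representation of $\mathbf{p}_t(x,y)$ only for $\mu_\ph$-a.e.\ $y$ (for each fixed $x$), so the naive substitution $y=x$ is not immediately meaningful; this is exactly why the theorem additionally asserts the existence of a jointly continuous version, and it is with respect to that version that $\mathbf{p}_t(x,x)$ is defined and \eqref{eq:heattrace} holds. Beyond this bookkeeping, the argument is a direct combination of the spectral decomposition, Parseval/orthonormality, and Tonelli's theorem, so no delicate estimates are needed. Finally one notes that the left-hand side is finite a.s.\ --- consistent with the earlier observation that $\bl_n\to\infty$ and $\sum_n\bl_n^{-2}<\infty$ --- since for $t>0$ the tail $\sum_n e^{-\bl_n t}$ converges, which also re-confirms a posteriori that $x\mapsto\mathbf{p}_t(x,x)$ is $\mu_\ph$-integrable.
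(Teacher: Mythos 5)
Your proposal is correct and follows exactly the route the paper takes: evaluate the jointly continuous version of the spectral decomposition from Theorem~\ref{T:spectraldec} on the diagonal, integrate against $\mu_\ph$, and use orthonormality of $(\ef_n)$, with Tonelli/monotone convergence justifying the interchange. The paper presents this as an immediate consequence without spelling out the interchange, so your extra care about the $\mu_\ph$-a.e.\ issue and the sum--integral swap is a welcome but equivalent elaboration.
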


The quantity on the left hand side of \eqref{eq:heattrace}, $\mathbf{H}(t) = \int_\Sigma \mathbf{p}_t(x,x) \mu_\ph(\mathrm{d} x)$, is called the \textbf{heat trace}. If we think of heat kernel $\mathbf{p}_t(x,y)$ as a matrix, then $\mathbf{H}(t)$ is indeed its trace. On the right hand side of \eqref{eq:heattrace}, the series $\sum_n e^{-\bl_n t}$ can be rewritten as $\int_0^\infty e^{- \lambda t} \dd\mathbf{N}(\lambda)$, where $\mathbf{N}(\lambda) = \sum_n 1_{\{ \bl_n \le \lambda \}}$ is the eigenvalue counting function, and $\dd\mathbf{N}$ is its associated Stieltjes measure. In other words, the heat trace formula reads 
\begin{equation}\label{eq:heattrace2}
\mathbf{H}(t) = \int_0^\infty e^{- \lambda t} \dd\mathbf{N}(\lambda)
\end{equation}  
for every $t>0$. It therefore shows that the heat trace equals the Laplace transform of $\dd\mathbf{N}$. This formula is thus an extremely powerful tool, in that it relates the large $\lambda$ asymptotics to the short term behaviour of the heat trace (i.e., to the on-diagonal behaviour of the heat kernel).

\section{Weyl's law in LQG}

\subsection{Classical Weyl law}
\label{SS:Weyl_classical}
Questions concerning the spectrum of the Laplacian (especially its eigenvalues) in a given bounded domain $D$ of $\R^d$ seem to be as old as spectral theory itself. In 1882, the famous German-born, British physicist Sir Arthur Schuster (known in particular for work on spectroscopy) wrote that \emph{``it would baﬄe the most skillful mathematician to find out the
shape of a bell by means of the sound which it is capable of sending out’’}. The question seems however to have entered the mathematical stream of consciousness via a conference given in 1910 by another famous physicist, Hendrik Lorentz, in Göttingen. During the conference, which was attended by the luminaries of the time (including Felix Klein, Hermann Minkowski, and especially David Hilbert and his student Hermann Weyl), Lorentz offered the following problem:

\emph{In an encolosure with a perfectly reflecting surface there can form standing
electromagnetic waves, analogous to tones of an organ pipe. We shall confine our
attention to very high overtones\footnote{That is, to high frequencies or large eigenvalues.}. […] There arises the mathematical problem to
prove that the number of overtones which lie between frequencies $[\nu, \nu + \mathrm{d} \nu]$
 is
independent of the shape of the enclosure, and is simply proportional to its
volume… It has been verified for many simple shapes… There is no doubt that the
theorem holds in general...’'}

Thus Lorentz conjectured what later became known as the Weyl law (in two dimensions): the approximate density of eigenvalues is constant, and is proportional to the volume of the corresponding domain. Put it another way, the eigenvalue counting function $N(\lambda) = \sum_{n=1}^\infty 1_{\{\lambda_n \le \lambda\}}$ grows linearly, proportionally to the volume. It is reported that the problem very much captured the imagination of Hilbert, who (apparently) declared that this problem would not be solved in his lifetime! It must therefore have come as a shock to him when, less than two years later, his own student Hermann Weyl proved his celebrated law. We state it below using the probabilistic conventions of working with $(-1/2) \Delta$ instead of the Laplacian, and in two dimensions in order to help facilitate the comparison with the LQG case later on. 

Let $\Sigma \subset \R^2$ be a bounded domain, and let $(f_n)_{n\ge 0}$ be an orthonormal basis of eigenfunctions of $(-1/2) \Delta$ in $L^2 (\Sigma,\mathrm{d} x)$ with Dirichlet boundary conditions\footnote{These are constructed in a way that is analogous to the work done in the previous section, by viewing the standard Green function as an operator on $L^2 (\Sigma, \mathrm{d}x)$ and applying the spectral theorem to it.} It is not hard to see (arguing similarly to \eqref{eq:EFreg}) that these eigenfunctions are smooth, i.e., $f_n \in C^2(\bar \Sigma)$. To summarise,
$$
\begin{cases} 
 -\tfrac12 \Delta f_n = \lambda_n f_n & \text{ in } \Sigma\\
 f_n = 0 & \text{ on } \partial \Sigma,
\end{cases}
$$
with $\lambda_n \to \infty$.

With these notations, the classical Weyl law \cite{Weyl} takes the following form: 

\begin{theorem}\label{T:Weyl_classical}
Let $N(\lambda) = \sum_{n=0}^\infty 1_{\{\lambda_n \le \lambda\}}$ denote the eigenvalue counting function. Then 
\begin{equation}\label{eq:Weyl_classical}
\frac{N(\lambda)}{\lambda} \to c_0 |\Sigma|
\end{equation}
as $\lambda \to \infty$. Here $c_0 = 1/ (2\pi)$ and $|\Sigma|$ denotes the Lebesgue measure of $\Sigma$. 
\end{theorem}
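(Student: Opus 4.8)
The plan is to go through the heat trace and a Tauberian theorem, exactly as one does for Liouville quantum gravity. The first step is to record the classical counterpart of the heat trace formula of Corollary \ref{C:heattrace}. Write $p_t^\Sigma(x,y)$ for the transition density of Brownian motion (with generator $\tfrac12\Delta$) killed on exiting $\Sigma$, so that $p_t^\Sigma$ is the kernel of $e^{-t(-\tfrac12\Delta)}$ with Dirichlet boundary conditions; its spectral expansion $p_t^\Sigma(x,y)=\sum_{n\ge 0} e^{-\lambda_n t} f_n(x) f_n(y)$ together with $\int_\Sigma f_n^2\,\dd x = 1$ yields, for every $t>0$,
\[
\Theta(t)\;:=\;\sum_{n=0}^\infty e^{-\lambda_n t}\;=\;\int_\Sigma p_t^\Sigma(x,x)\,\dd x\;=\;\int_0^\infty e^{-\lambda t}\,\dd N(\lambda),
\]
the exact analogue of \eqref{eq:heattrace}--\eqref{eq:heattrace2}. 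So it is enough to understand the short-time behaviour of $\Theta$ and then invert.

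The second step is to show $\Theta(t)\sim |\Sigma|/(2\pi t)$ as $t\to 0^+$. The free planar heat kernel of $\tfrac12\Delta$ is $(2\pi t)^{-1}\exp\!\left(-|x-y|^2/(2t)\right)$, whence $p_t^\Sigma(x,x)\le (2\pi t)^{-1}$ for all $x,t$; and decomposing a path at its exit time $\tau$ of $\Sigma$ via the strong Markov property gives
\[
(2\pi t)^{-1}-p_t^\Sigma(x,x)\;=\;\mathbb{E}_x\!\left[\tfrac{1}{2\pi(t-\tau)}\exp\!\left(-\tfrac{|X_\tau-x|^2}{2(t-\tau)}\right)\,;\,\tau<t\right].
\]
For fixed $x\in\Sigma$ one has $|X_\tau-x|\ge \dist(x,\partial\Sigma)=:d>0$, so for $t<d^2/2$ the right-hand side is at most $(2\pi t)^{-1}e^{-d^2/(2t)}=o(t^{-1})$; hence $t\,p_t^\Sigma(x,x)\to 1/(2\pi)$ for every $x\in\Sigma$, the convergence being dominated by the constant $1/(2\pi)$. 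Since $\Sigma$ is bounded, dominated convergence gives $t\,\Theta(t)=\int_\Sigma t\,p_t^\Sigma(x,x)\,\dd x\to |\Sigma|/(2\pi)$. (As every point of the open set $\Sigma$ is interior, no regularity of $\partial\Sigma$ enters here.)

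The third and last step is Karamata's Tauberian theorem: $\Theta(t)=\int_0^\infty e^{-\lambda t}\,\dd N(\lambda)$ is the Laplace transform of the nondecreasing function $N$ and $\Theta(t)\sim(|\Sigma|/(2\pi))\,t^{-1}$ as $t\to 0^+$, so $N(\lambda)\sim \frac{|\Sigma|/(2\pi)}{\Gamma(2)}\,\lambda=\frac{|\Sigma|}{2\pi}\,\lambda$ as $\lambda\to\infty$, which is \eqref{eq:Weyl_classical} with $c_0=1/(2\pi)$. In the classical case both potentially delicate points — the short-time trace asymptotic and the Tauberian inversion — are routine: the former because the Dirichlet heat kernel does not feel the boundary in small time at a fixed interior point, the latter a standard black box whose only real input is the monotonicity of $N$. (This is precisely where the LQG case is genuinely harder: there is no closed form for the heat kernel, and the analogue of the second step becomes the heart of the matter.) An entirely elementary alternative avoiding heat kernels is Dirichlet--Neumann bracketing: tile $\Sigma$ by squares of side $\ell$, sandwich $N(\lambda)$ between sums of Dirichlet and Neumann counting functions of the tiles using domain monotonicity of Dirichlet eigenvalues and interlacing of Neumann ones, evaluate the tile contributions explicitly from the square eigenvalues $\tfrac{\pi^2}{2\ell^2}(m^2+n^2)$ via the Gauss circle problem, and let $\ell\to 0$; this route does require mild control of the boundary tiles (e.g. $\Leb(\partial\Sigma)=0$).
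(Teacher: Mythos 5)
Your proof is correct and follows essentially the same route the paper sketches: the heat trace identity $\sum_n e^{-\lambda_n t}=\int_\Sigma p_t^\Sigma(x,x)\,\dd x$, the pointwise interior asymptotic $p_t^\Sigma(x,x)\sim 1/(2\pi t)$ (which the paper asserts is ``rather simple to justify'' and you justify cleanly via the strong Markov property and dominated convergence), and Karamata's Tauberian theorem with $\rho=1$, exactly as in \eqref{eq:heattrace_classical} and the surrounding discussion. The bracketing alternative you mention is likewise the paper's own reference to Weyl's original argument, so nothing further is needed.
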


\begin{remark} Let us make a few comments on this result, which in reality is considerably more general than what is quoted above:
\label{R:Weyl_classical}
\begin{enumerate}

\item The result is not restricted to the case of dimension two, and was in Weyl's original paper \cite{Weyl} written for dimensions two and three. If $\Sigma \subset \R^d$ is a bounded domain of $\R^d$, then $N(\lambda)$ grows asymptotically like $\lambda^{d/2}$, and the corresponding constant of proportionality is as in \eqref{eq:Weyl_classical}, with the value $c_0$ replaced by $c_0 (d) = 2 (2\pi)^{-d} \omega_d$, where $\omega_d$ is the volume of the unit ball in $\R^{d}$. 

\item We stated the result here for Dirichlet eigenfunctions, but the same result holds for Neumann eigenfunctions (taking care of first removing constant functions from consideration, since these would otherwise make the Laplacian non-invertible and so the corresponding Green function ill-defined). This Neumann setup corresponds in fact more precisely to the setting of the original problem asked by Lorentz.   

\item The Weyl law also holds not just in $\R^2$ but more generally on an arbitrary bounded Riemannian manifold $(\Sigma, g)$ (with or without boundary; in the latter case one should also first remove the constant functions, as in the Neumann case above). It might seem surprising at first sight that the constant of proportionality (i.e., the value of the limit in Theorem \ref{T:Weyl_classical}) is in fact independent of the metric $g$ on the manifold, except for its dependence on the total volume. That is,  \eqref{eq:Weyl_classical} holds where in the right hand side, $|\Sigma|$ denotes the total Riemannian volume of $\Sigma$ and $c_0 = 1/(2\pi)$ is the same whatever the metric $g$ on $\Sigma$. 

\item We stated the Weyl law for the Laplacian, but generalisations to other operators (e.g., Schrödinger operators) are also known. 

\end{enumerate}

\end{remark}

There exist a great many proofs of this formula. Weyl's original proof consisted in making use of the ``bracketing method'', cutting the shape $\Sigma$ in many small squares and using the explicitly known eigenfunctions in each small square to construct eigenfunctions on a square-grid like approximation of the entire domain $\Sigma$; the variational characterisation of eigenvalues via Rayleigh's quotient ensures monotonicity of the eigenvalues with respect to the domain, which essentially concludes the proof. For a probabilist, the most efficient proofs today are in fact much quicker: as we will see (and soon make use of), one can use the trace formula (the deterministic analogue of \eqref{eq:heattrace2}) to relate the large $\lambda$ asymptotics of $N(\lambda)$ (more precisely, its associated Stieltjes measure) to the short term behaviour of the heat kernel. This heat kernel is that of Brownian motion killed on the boundary of $\Sigma$. But for most points $x\in \Sigma$, when  $t\to 0$, the fact that Brownian motion is killed when leaving $\Sigma$ is irrelevant, since the boundary is ``far'', relative to the point $x$. One can therefore guess (and it is rather simple to jusify) that 
\begin{equation}
p^\Sigma_t(x,x) \sim \tfrac{1}{2\pi t}
\end{equation}
as $t \to 0$, where the right hand side comes from the on-diagonal transitions of Brownian motion on the whole plane, i.e., the density at the origin of the two-dimensional centered Gaussian probability distribution with covariance matrix $tI$. This gives the asymptotics of the heat trace: 
\begin{equation}\label{eq:heattrace_classical}
H(t) : = \int_\Sigma p^\Sigma_t(x,x) \dd x \sim \frac1{2\pi t} |\Sigma|. 
\end{equation}
To transfer this result onto the asymptotics of $N(\lambda)$ it remains to apply a Tauberian theorem such as Karamata's theorem (a probablistic version of which is given below in Lemma \ref{L:tauberian}; here it suffices to apply the deterministic version with $\rho =1$). 

\paragraph{Beyond Weyl's law.} The above result can be regarded as the foundational stone of an entire domain of mathematics (spectral geometry) and has given rise to a huge number of works in many directions, see for instance \cite{Ivrii100} for some discussions. We point out a few important directions here, and will discuss some more later in the LQG setup. 

\begin{enumerate}

\item Partly inspired by Weyl's work (but seemingly unaware that the question had already been asked by Schuster) Mark Kac coined the famous question ``\emph{Can one hear the shape of a drum?}'' in his 1966 survey article \cite{Kac}. In this description (which is actually slightly reductive compared to the real-world problem), the sounds one can hear from a drum being struck are its eigenvalues. Thus the question is: does the spectrum $\{\lambda_n\}_{n\ge 1}$ of $(-1/2) \Delta$ in $\Sigma$ with Dirichlet boundary conditions actually determine $\Sigma$ uniquely, up to isometries of $\R^d$? From Weyl's law, the spectrum at least determines the volume $|\Sigma|$ of $\Sigma$, so one can at least ``hear the volume'' of the drum. Shortly before Kac's article (and this was probably the main inspiration for Kac's work), the topologist John Milnor \cite{Milnor} observed that there exists a pair of flat tori in 16 dimensions  which are isospectral (i.e., have the same eigenvalue sequence) but are not isometric. However it was not until 1992 that  Gordon, Webb and Wolpert \cite{Iso} found two isospectral but non-isometric bounded planar domains of $\R^2$. This resolves in the negative Kac's original question.

\item Since the Weyl law provides the clearest \emph{spectral invariant} (observable of $\Sigma$ determined by the eigenvalue sequence), it is natural to ask more precise about the asymptotic behaviour of $N(\lambda)$ as $\lambda \to \infty$, since (for the same reasons) any additional terms provides further spectral invariants. In his original paper, Weyl \cite{Weyl} actually made a precise conjecture about the next term for the eigenvalue counting function: in $\Sigma \subset \R^d$ with $d\ge 1$, the conjecture is that 
\begin{equation}\label{eq:Weyl_next}
N(\lambda) = c_0 \lambda^{d/2} |\Sigma| - c'_0 \lambda^{(d-1)/2} |\partial \Sigma| + o(\lambda^{(d-1)/2}),
\end{equation}
where $c'_0 = c'_0(d)$ is another explicit constant, and $|\partial \Sigma|$ is the length of the boundary (when that length is well-defined, e.g. if $\partial \Sigma$ is smooth). In particular, the boundary length should be a spectral invariant too. (In the Neumann case, the same result is believed to hold, except for the sign of the correction which is reversed). Amazingly, this conjecture is still open in general! However, we know of at least two results which strongly support this conjecture. First, Courant \cite{Courant} proved in 1920 a bound on the discrepancy between the eigenvalue counting function $N(\lambda)$ and its Weyl limit:
\begin{equation}\label{eq:Courant}
\big|N(\lambda) - c_0 \lambda^{d/2} |\Sigma|\big| = o ( \lambda^{(d-1)/2} \log \lambda). 
\end{equation}
Secondly, Ivrii \cite{Ivrii} proved in 1980 that Weyl's conjecture \eqref{eq:Weyl_next} is true, if one makes the following additional assumption on $\Sigma$: the measure of all closed (periodic) geodesics in $\Sigma$ is zero (in other words, there are not too many such geodesics). Ivrii further conjectured that every smooth, bounded domain in $\R^d$ satisfies this property; unfortunately to this day this remains an open problem. See \cite{Ivrii100} for a survey of some of these developments. 

\item While it is hard to obtain an expansion at infinity of the eigenvalue counting function beyond the first term (as discussed above), it is on the other hand considerably easier to obtain short time asymptotics of the heat trace $H(t) = \int_\Sigma p^\Sigma_t(x,x) \dd x$. The first term is, as discussed above in \eqref{eq:heattrace_classical}, proportional to the volume and $(1/t)$, and this corresponds to the Weyl law. The next term is also known and ``corresponds'' to the conjecture \eqref{eq:Weyl_next} about the next term in the Weyl law, namely
\begin{equation}\label{eq:heattrace_classical2}
H(t) = \frac{1}{2\pi t} |\Sigma| - \frac{c'_0}{\sqrt{t}} |\partial \Sigma| + o( 1/ \sqrt{t}),
\end{equation}
and $c'_0 = 1/ (8 \sqrt{\pi/2})$. In particular, this shows that the boundary length \emph{is} spectrally determined, since $H(t)$ is the Laplace transform of the eigenvalue counting function and is thus spectrally determined too. While ``morally'' this expansion is analogous to that in \eqref{eq:Weyl_next}, and despite the relation \eqref{eq:heattrace2} between heat trace and eigenvalues, there is no direct way to transfer the result from the heat trace to the eigenvalue counting function when it comes to second order asymptotics. This is because Tauberian theory requires an assumption of monotonicity and so would not apply to, e.g., the Stieltjes measure associated to $N(\lambda) - c_0 \lambda |\Sigma|$, say.  

\item In fact, much more precise asymptotics than \eqref{eq:heattrace_classical2} are known. In particular, subsequent terms (assuming smoothness of the boundary $\partial \Sigma$) involve the so-called \textbf{$\zeta$-regularised determinant of the Laplacian}, itself a quantity of considerable interest in the context of Liouville theory (as it is in fact equal, up to a constant to the classical Liouville action which underpins the Polyakov action \eqref{E:Polyakov}), and thus satisfies a so-called \textbf{Polyakov--Alvarez} conformal anomaly formula. See e.g. \cite{TakhtajanZograf} and \cite{BOW} for details. Likewise, if $\partial \Sigma$ is smooth except at a finite number corners of given angle, the impact on the heat trace of these corners is precisely known as a function of the number and angle of these corners, see \cite{LuRowlett, Krusell}. In particular, both the number of corners and their angles of the shape $\Sigma$, as well as the value of its $\zeta$-regularised determinant of the Laplacian, are spectral invariants. 
\end{enumerate}

\subsection{Statement of the result and sketch of proof}

We now have the tools in hand to state and discuss the result obtained in \cite{BW} for the Weyl law in LQG. We let $\Sigma$ be a bounded, simply connected domain. (Note that we do not assume any smoothness of the boundary beyond simple connectedness; in fact even the latter can probably be removed by assuming that there is at least point $z_0 \in \partial \Sigma$ on the boundary which is regular for the Green function: that is, $P_{z_0}(\inf \{t > 0: W_t \not \in \Sigma\} = 0) = 1$, i.e., $W$ leaves $\Sigma$ immediately, where $W$ is a Brownian motion. It is well known that such an assumption implies that the Green function $g_\Sigma(x,y)$ in $\Sigma$ with Dirichlet boundary condition is everywhere finite away from the diagonal.) Let us fix a coupling constant $0< \gamma<2$, and let $\mathbf{N}(\lambda)$ denote the LQG eigenvalue counting function as in Section \ref{SS:Green_spectrum}, associated to an underlying Gaussian free field $\ph$ with Dirichlet boundary conditions on $\Sigma$ and associated Liouville volume measure (or Gaussian multiplicative chaos measure) $\mu_\ph$. The main result of \cite{BW} is the following Weyl law in LQG. 

\begin{thm}\label{T:Weyl} Let $0< \gamma <2$.
We have
\begin{align}\label{eq:Weyl}
\frac{\mathbf{N}_{\gamma}(\lambda)}{\lambda} \xrightarrow[\lambda \to \infty]{p} c_\gamma \mu_{\ph}(\Sigma),
\end{align}
where $ \xrightarrow{p}$ denotes convergence in probability, and the constant $c_\gamma$ satisfies
\begin{equation}
c_\gamma = \frac1{\pi(2- \gamma^2/2)}.
\end{equation}
\end{thm}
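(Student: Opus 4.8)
The plan is to follow the probabilist's route sketched for the classical case: first establish the short-time asymptotics of the Liouville heat trace $\mathbf{H}(t) = \int_\Sigma \mathbf{p}_t(x,x)\,\mu_\ph(\dd x)$, and then invoke a (probabilistic) Tauberian theorem to transfer this into the asymptotics of $\mathbf{N}_\gamma(\lambda)$ via the heat trace formula \eqref{eq:heattrace2}. So the heart of the matter is to prove that, as $t \to 0$,
\begin{equation}\label{eq:LQGheattrace_goal}
\mathbf{H}(t) \sim \frac{c_\gamma\, \mu_\ph(\Sigma)}{t^{?}}
\end{equation}
in probability, where the correct power of $t$ must be $t^{-1}$ (matching the spectral dimension $2$) and the constant $c_\gamma = 1/(\pi(2-\gamma^2/2))$ emerges from a GMC computation. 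The key input is the bridge identity (Theorem \ref{T:bridge}): applied with $x=y$ and $\psi = $ suitable test functions, it gives $\int_0^\infty \psi(t)\mathbf{p}_t(x,x)\,\dd t = \int_0^\infty \bdec{x}{x}{t}[\psi(F(t))]\, p_t^\Sigma(x,x)\,\dd t$, which expresses time-integrals of the on-diagonal heat kernel purely in terms of the ordinary Brownian loop measure and the quantum clock $F(t) = \lim_\eps \eps^{\gamma^2/2}\int_0^t e^{\gamma\ph_\eps(b_s)}\dd s$ along Brownian loops.

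The first main step is a local analysis of the quantum clock along a short Brownian loop based at $x$. For small $t$, the loop $(b_s)_{0\le s\le t}$ stays within a ball of radius $\sim\sqrt t$ of $x$, on which the GFF behaves (after subtracting the circle average $h_{\sqrt t}(x)$) like a field independent of scale. One writes $F(t) \approx e^{\gamma h_{\sqrt t}(x)} (\sqrt t)^{\gamma^2/2}\cdot \hat F$, where $\hat F$ is a unit-scale clock with a law not depending on $t$ or $x$ — this is the scaling/Markov property of the GFF combined with the exact Weyl normalisation $\eps^{\gamma^2/2}$ built into GMC. Feeding this into the bridge identity and using $p_t^\Sigma(x,x) \sim 1/(2\pi t)$ should yield, for an appropriate Tauberian test function, a representation of $\int_0^\infty \psi(t)\mathbf{p}_t(x,x)\,\dd t$ in which the GFF enters only through $\mu_\ph$; one then identifies the constant $c_\gamma$ by matching the power-law exponent of $\hat F$ (a moment/tail computation for the GMC clock giving the factor $1/(2-\gamma^2/2)$) with the prefactor $1/(2\pi)$ from the Brownian loop density, and checking $\tfrac{1}{2\pi}\cdot\tfrac{2}{2-\gamma^2/2} = c_\gamma$ — i.e. $c_\gamma = 1/(\pi(2-\gamma^2/2))$.

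The second step is a concentration/localisation argument: the above is heuristically a pointwise-in-$x$ statement, and one must integrate over $x$ against $\mu_\ph(\dd x)$ and show that the fluctuations vanish. Because distant regions of $\Sigma$ decorrelate for the GFF at small scales (long-range logarithmic correlations contribute only to the coarse field, which factors out into the circle-average term $e^{\gamma h_{\sqrt t}(x)}$ that reconstructs $\mu_\ph(\dd x)$ exactly), a first- and second-moment computation should give $\mathbf{H}(t)\cdot t \to c_\gamma\mu_\ph(\Sigma)$ in $L^1$ or in probability. Finally, apply the Tauberian theorem (Lemma \ref{L:tauberian}, with exponent $\rho=1$): since $\dd\mathbf{N}$ is a positive measure and $\mathbf{H}$ is its Laplace transform, $\mathbf{H}(t)\sim c_\gamma\mu_\ph(\Sigma)/t$ as $t\to 0$ implies $\mathbf{N}_\gamma(\lambda)\sim c_\gamma\mu_\ph(\Sigma)\lambda$ as $\lambda\to\infty$, in probability.

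I expect the main obstacle to be the uniform control needed to pass from the heuristic pointwise clock expansion to a rigorous short-time asymptotic of $\mathbf{H}(t)$ that holds in probability: one needs uniform-in-$x$ estimates on the GMC clock along Brownian loops (upper and lower bounds on moments, and the fact that the error in replacing $\ph$ by its circle average plus an independent unit-scale field is negligible after integrating), together with a genuinely probabilistic Tauberian step since $\mu_\ph(\Sigma)$ is itself random and the convergence in \eqref{eq:LQGheattrace_goal} is only in probability — so one cannot simply quote the deterministic Karamata theorem but must have a version robust to this randomness (hence the appeal to a Lemma \ref{L:tauberian} tailored for random Stieltjes measures). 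Controlling the contribution of loops on which $F(t)$ is atypically small (which could in principle make $\mathbf{p}_t(x,x)$ large) is the delicate quantitative point, and is presumably where the restriction $\gamma<2$ and the modulus-of-continuity estimate in Lemma \ref{L:modcont} get used.
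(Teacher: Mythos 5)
Your architecture coincides with that of \cite{BW} as sketched in the paper: reduce the Weyl law to short-time heat-trace asymptotics via the probabilistic Karamata lemma, access the heat kernel only through its time integrals using the bridge identity, and identify the limit by a first- and second-moment (GMC-style) computation exploiting the scaling structure of the field near a $\mu_\ph$-typical point. The final Tauberian step (from $\mathbf{H}(t)\sim c_\gamma\mu_\ph(\Sigma)/t$ to $\mathbf{N}(\lambda)\sim c_\gamma\mu_\ph(\Sigma)\lambda$) is correct as you state it.

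There is, however, one genuine gap in the middle of your plan, precisely at the point where you pass from the bridge identity to the asymptotics $\mathbf{H}(t)\sim c_\gamma\mu_\ph(\Sigma)/t$. The bridge identity only gives you weighted time integrals $\int_0^\infty\psi(t)\,\mathbf{p}_t(x,x)\,\dd t$, and the statement you need sits exactly at the critical index of Tauberian theory: knowing, say, $\int_\eps^1 \mathbf{H}(s)\,\dd s\sim c\log(1/\eps)$ together with monotonicity of $t\mapsto\mathbf{H}(t)$ does \emph{not} imply $\mathbf{H}(t)\sim c/t$ (take $H(t)=(1+a\sin\log t)/t$ with $0<a<1$: it is monotone, its integral is $\log(1/\eps)+O(1)$, yet $tH(t)$ does not converge). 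So "an appropriate Tauberian test function" cannot do the job on its own. The fix used in \cite{BW} is to first prove the \emph{off-critical} statement $\int_0^\eps t\,\mathbf{H}_A(t)\,\dd t\sim c_\gamma\mu_\ph(A)\,\eps$ for every open $A$ (which \emph{is} accessible: take $\psi(t)=t e^{-\lambda t}$ in the bridge identity and apply Lemma \ref{L:tauberian} with $\rho=1$), and then invoke a separate elementary de-integration lemma (Lemma A.1 in \cite{BW}) that exploits the monotonicity of $t\mapsto\mathbf{H}_A(t)$ to recover the pointwise-in-$t$ asymptotics. Your plan needs this step made explicit; as written it would stall. Two smaller caveats: (i) in the second-moment bound the danger comes from points where the field is atypically \emph{large} ($\alpha$-thick points with $\alpha>\gamma$), which must be truncated away before computing moments --- not from loops where $F(t)$ is atypically small; and (ii) the one-point computation naturally produces a non-explicit constant (an expectation of a functional of the radial process of the quantum cone, via Williams' decomposition), and the identification of its value as $1/(\pi(2-\gamma^2/2))$ required a separate indirect argument in \cite{BW}; your arithmetic $\tfrac{1}{2\pi}\cdot\tfrac{2}{2-\gamma^2/2}=c_\gamma$ is the right heuristic but should not be presented as a derivation.
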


In \cite{BW}, the constant $c_\gamma$ was identified in two steps. First, the result was obtained with $c_\gamma$ being a certain explicit but complicated functional of the so-called $\gamma$-quantum cone of Sheffield \cite{zipper}. Then, using an indirect calculation (i.e., \emph{not} using the explicit expression in terms of the quantum cone) its value was computed and identified explicitly. A more transparent interpretation of $c_\gamma$ is given in \cite{BK} and will be described below (see \eqref{eq:cgamma_cone}). 

\begin{remark} We make several remarks about this result. 

\begin{enumerate}
\item Comparing with the $d$-dimensional form of the classical Weyl law, the statement is (partly) consistent with a ``two-dimensional'' Weyl behaviour: namely the growth of $\mathbf{N}(\lambda)$ is proportional to $\lambda^{d/2}$ with $ d= 2$. The power $d$ appearing in the growth of the eigenvalue counting function as here is (closely related to) a quantity known as the spectral dimension. Thus here the spectral dimension of LQG is two, for any $0<\gamma<2$. Such a behaviour had been predicted by Ambj\o rn et al. \cite{Ambjorn}, and a rigorous proof of a closely related fact had first been provided by Rhodes and Vargas \cite{RVspectral}.

\item However the constant of proportionality $c_\gamma$ does not match the Weyl constant $c_0 = 1/(2\pi)$ from Theorem \ref{T:Weyl_classical}, except of course if the coupling constant $\gamma = 0$. This is all the more surprising in view of the fact that, as mentioned in Remark \ref{R:Weyl_classical}, the constant $c_0$ is the same on \emph{any} Riemannian manifold, independently of the metric. Note in particular that, although it might at first sight be tempting to relate the appearance of the factor $(2-\gamma^2/2)$ in the denominator of $c_\gamma$ to the Hausdorff dimension of the set of so-called $\gamma$-thick points (known to support $\mu_\ph$, see, e.g., \cite{BP}) there does not appear to be any reason why this fact should be at all relevant here. This can be seen from several different points of view. First, changing the dimension of the ``support'' of $\mu_\ph$ might reasonably be thought to affect the spectral dimension, not the constant of proportionality. Secondly, and more importantly, the Hausdorff dimension of the set of thick points is a Euclidean measure of the size of the set of thick points, and is thus not intrinsic to LQG (in contrast to the question studied in Theorem \ref{T:Weyl}). 

\end{enumerate}  

\end{remark}

Let us now sketch the main steps of the proof of this result. 

\begin{proof}[Sketch of proof of Theorem \ref{T:Weyl}] In common with the proof outlined for the classical Weyl law, the starting point of the proof is to consider the heat trace $\mathbf{H}(t) = \int_\Sigma \mathbf{p}_t(x,x) \mu_\ph(\dd x)$. While this can be expected to hold in many models of random geometry, usually such a formula is not directly useful because it is hard to get a handle on the heat kernel. (For instance, in randomised Sierpinski gaskets and other random fractals such as continuous random trees and stable trees, for which Weyl-type asymptotics are known -- see \cite{Hambly, CH2008, CH2010} -- the trace formula can only be used to derive \emph{a posteriori} estimates on the heat kernel, rather than the other way around). 
 However, in the case of LQG we can get a handle on the heat kernel via the bridge formula of Theorem \ref{T:bridge}, which gives a concrete expression for (time integrals of) the heat kernel and which, fundamentally, expresses the conformally invariant nature of the problem. The final ingredient is a certain ``self-similarity'' of the so-called quantum cone, as will be explained below. 

\paragraph{Step 1.} The starting point is to prove a probabilistic extension of Karamata's classic Tauberian theorem. This extension is required because of the mode of convergence in Theorem \ref{T:Weyl}, which is that of convergence in probability rather than almost sure. (For almost sure convergence, the theorem itself can of course be directly applied and no extension is required). 

Recall that a function $L: (0, \infty) \to (0, \infty)$ is slowly varying at zero if $\lim_{t \to 0^+} L(xt)/L(t) = 1$ for any $x > 0$ (and analogously for slow variation at infinity).
\begin{lemma}[Theorem A.2 in \cite{BW}]\label{L:tauberian}
Let $\nu$ be a nonnegative random measure on $\mathbb{R}_+$, $\nu(t) := \int_0^t \nu(ds)$, and suppose the Laplace transform
\begin{align*}
    \hat{\nu}(\lambda) := \int_0^\infty e^{-\lambda s} \nu(ds)
\end{align*}

\noindent exists almost surely for any $\lambda > 0$. If
\begin{itemize}\setlength\itemsep{0em}
\item $\rho \in [0, \infty)$ is fixed;
\item $L: (0, \infty) \to (0, \infty)$ is a deterministic slowly varying function at $0$; and 
\item $C_\nu$ is some non-negative (finite) random variable,
\end{itemize}

\noindent then we have:
\begin{align}\label{eq:tauberian}
    \frac{\lambda^\rho}{L(\lambda^{-1})} \hat{\nu}(\lambda) \xrightarrow[\lambda \to \infty]{p} C_\nu
    \qquad \Rightarrow \qquad 
    \frac{t^{-\rho}}{L(t)} \nu(t) \xrightarrow[t \to 0^+]{p} \frac{C_\nu}{\Gamma(1+\rho)}.
\end{align}

\noindent The same implication also holds when one considers the asymptotics as $\lambda \to 0^+$ and $t \to \infty$ in \eqref{eq:tauberian} (but with $L$ being slowly varying at infinity) instead.
\end{lemma}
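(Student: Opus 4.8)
\emph{Proof plan.} The plan is to deduce the statement from the classical correspondence, near the origin, between a measure and its Laplace transform, but to route the argument through \emph{vague convergence of rescaled measures} rather than through a pathwise application of Karamata's Tauberian theorem. The reason for this detour is the main obstacle: convergence in probability of $\lambda^{\rho}\hat\nu(\lambda)/L(\lambda^{-1})$ as $\lambda\to\infty$ can only be upgraded to \emph{almost sure} convergence along a \emph{sparse} (random-rate) subsequence of $\lambda$-values, which is too little to feed into the deterministic Tauberian theorem — that theorem needs the full continuous limit $\lambda\to\infty$. However, the classical ``method of moments'' proof uses the Laplace transform only at a \emph{countable} set of arguments, and those can be handled simultaneously.

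First note that the hypothesis forces $\nu(t)=\nu([0,t])\le e^{\lambda t}\,\hat\nu(\lambda)<\infty$ a.s., so $\nu(\cdot)$ is a.s. a finite nondecreasing right-continuous function. Set $\phi(t)=t^{\rho}L(t)$, and for each $\lambda>0$ introduce the random, locally finite measure $\mu_\lambda$ on $[0,\infty)$ determined by $\mu_\lambda([0,x])=\nu(x/\lambda)/\phi(1/\lambda)$. A change of variables gives $\int_0^\infty e^{-sx}\,\mu_\lambda(\dd x)=\hat\nu(s\lambda)/\phi(1/\lambda)$, and combining the hypothesis applied at $s\lambda\to\infty$ with the slow variation of $L$ at $0$ (which gives $\phi(1/(s\lambda))/\phi(1/\lambda)\to s^{-\rho}$) yields, for every fixed $s>0$,
\begin{equation*}
\int_0^\infty e^{-sx}\,\mu_\lambda(\dd x)\;\xrightarrow[\lambda\to\infty]{p}\;s^{-\rho}C_\nu\;=\;C_\nu\!\int_0^\infty e^{-sx}\,m_\rho(\dd x),
\end{equation*}
where $m_\rho$ is the deterministic measure with distribution function $x^{\rho}/\Gamma(1+\rho)$ (so $m_0=\delta_0$).

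Now invoke the subsequence principle: to prove $\frac{t^{-\rho}}{L(t)}\nu(t)\xrightarrow[t\to0^+]{p}\frac{C_\nu}{\Gamma(1+\rho)}$ it suffices that every sequence $\lambda_j\to\infty$ admit a subsequence $(\tilde\lambda_j)$ along which $\mu_{\tilde\lambda_j}([0,1])=\frac{(\tilde\lambda_j)^{\rho}\,\nu(\tilde\lambda_j^{-1})}{L(\tilde\lambda_j^{-1})}\to\frac{C_\nu}{\Gamma(1+\rho)}$ almost surely. Fix a countable dense set $\{s_1,s_2,\dots\}\subset(0,\infty)$; using the displayed convergence in probability at each $s_m$ and a diagonal extraction, pass to a subsequence $(\tilde\lambda_j)$ along which, on a single almost sure event, $\int e^{-s_m x}\,\mu_{\tilde\lambda_j}(\dd x)\to s_m^{-\rho}C_\nu$ for all $m$ at once. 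From here the argument is deterministic: the functions $s\mapsto\int e^{-sx}\,\mu_{\tilde\lambda_j}(\dd x)$ are non-increasing, so their convergence on the dense set $\{s_m\}$ to the continuous function $s\mapsto s^{-\rho}C_\nu$ forces convergence for \emph{all} $s>0$; applying the continuity theorem for Laplace transforms to the finite measures $e^{-x}\mu_{\tilde\lambda_j}(\dd x)$ then gives $\mu_{\tilde\lambda_j}\to C_\nu m_\rho$ vaguely on $[0,\infty)$. Since $1$ is the only point of $\partial[0,1]$ in the relative topology of $[0,\infty)$ and $C_\nu m_\rho(\{1\})=0$, vague convergence yields $\mu_{\tilde\lambda_j}([0,1])\to C_\nu m_\rho([0,1])=C_\nu/\Gamma(1+\rho)$, as required. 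The statement for $\lambda\to0^+$, $t\to\infty$ (with $L$ slowly varying at infinity) is proved identically: the rescaling is the same, only now $1/\lambda\to\infty$ and one uses slow variation of $L$ at infinity.

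The only real difficulty is the one flagged above — one cannot apply Karamata pathwise — and it is resolved by the dense-set/diagonal trick. A secondary point to watch is the degenerate case $\rho=0$, where the limiting measure $C_\nu m_0=C_\nu\delta_0$ has an atom at the origin; this is harmless, because $0$ is an \emph{interior} point of $[0,1]$ in the relative topology of $[0,\infty)$, so $[0,1]$ is still a continuity set of the limit. Everything else — the change of variables, the monotone upgrade from dense-set to full convergence, the Laplace-transform continuity theorem, and the subsequence principle — is routine.
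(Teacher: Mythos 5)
Your argument is correct, and I have no substantive objections: the reduction to vague convergence of the rescaled measures $\mu_\lambda$, the diagonal extraction over a countable dense set of Laplace parameters combined with the subsequence criterion for convergence in probability, the monotonicity upgrade from dense-set to full convergence, and the application of the extended continuity theorem to the finite measures $e^{-x}\mu_{\tilde\lambda_j}(\dd x)$ all go through; you also correctly identify the one genuine obstacle (one cannot apply Karamata pathwise, since convergence in probability as $\lambda\to\infty$ only yields almost sure convergence along sparse subsequences) and the harmless atom at the origin when $\rho=0$. The present paper does not prove this lemma -- it is quoted as Theorem A.2 of \cite{BW} -- so there is no in-text proof to compare against; your route is the natural adaptation of Feller's continuity-theorem proof of Karamata's theorem to the in-probability setting and is, to the best of my knowledge, essentially the argument used in \cite{BW}.
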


 By Lemma \ref{L:tauberian}, and in view of the heat trace formula \eqref{eq:heattrace2}, in order to prove Theorem \ref{T:Weyl} it suffices to establish $t \mathbf{H}(t) \to c_\gamma \mu_\ph(\Sigma)$ in probability as $t\to 0$. In fact, we will establish the following stronger statement: if $A\subset \Sigma$ is any open set, and $\mathbf{H}_A(t) = \int_A \mathbf{p}_t(x,x) \mu_\ph(\dd x)$, then
\begin{equation}\label{eq:goalheattrace3}
\mathbf{H}_A(t)  \sim \frac{c_\gamma \mu_\ph(A)}{t},
\end{equation}
in the sense that the ratio of the two sides tends to one in probability as $t\to 0$. Unfortunately however, we do not have a direct access to $\mathbf{p}_t(x,x)$. Ultimately, our only hope is to integrate over time and make use of the bridge formula (Theorem \ref{T:bridge}), since time integrals of the Liouville heat kernel can be expressed in terms of time integrals of the standard heat kernel, with an extra multiplicative factor accounting for the time-change defining Liouville Brownian motion. For instance, a natural approach is to consider $\int_\eps^1 \mathbf{H}_A(t) \dd t$, and try to show this behaves like 
\begin{equation}\label{eq:wrong}
\int_\eps^1 \mathbf{H}_A(t) \dd t \sim c_\gamma \mu_\ph ( A) \log (1/\eps),
\end{equation}
in probability as $\eps \to 0$ (i.e., the ratio of the two sides tends to one as $\eps \to 0$ in probability). However, even taking into account the monotonicity $t\mapsto \mathbf{H}_A(t)$, \eqref{eq:wrong} alone is not sufficient to ensure \eqref{eq:goalheattrace3}. An instructive example is to consider 
$$
H(t)  = \frac{1 + a \sin(\log t)}{ t}$$  for $t\in (0,1)$. Then $H'(t) = -t^{-2} [1 - a \cos (\log t)]$, so if $0<a<1$ (say $a = 1/2$) we have that $H'(t) <0$ and $H$ is monotone in $t$, just like $t\mapsto\mathbf{H}_A(t)$ is. Furthermore,  $\int_\eps^1 H(s) \dd s = -\log \eps - a[1 - \sin(\log \eps)] = -\log \eps + O(1)$. So the analogue of \eqref{eq:wrong} holds for this example, yet $H(t)$ itself is not asymptotic to $1/t$ when $t\to 0$. This counterexample is related to the limitations of Tauberian theory, for which the exponent $\rho =-1$ is critical and falls outside its scope. (A more precise theory exists which deals with this critical case, known as de Haan's theory, but its assumptions are far too stringent for our purpose).   
 
Instead of \eqref{eq:goalheattrace3}, we claim it suffices to show that there exists some arbitrary $\alpha>0$ (we will take $\alpha = 1$) such that
\begin{equation}
\label{eq:heattrace3int}
 \int_0^\eps t^\alpha \mathbf{H}_A(t) \dd t  \sim c_\gamma \mu_\ph(A) \frac{\eps^\alpha}{\alpha}
\end{equation}
as $\eps \to 0$, again in the sense that the ratio of the two sides tends to 1 in probability. That \eqref{eq:heattrace3int} indeed really implies \eqref{eq:goalheattrace3} is provided by a Tauberian-type argument, which exploits the monotonicity in $t$ of $\mathbf{H}_A(t)$ (see Lemma A.1 in \cite{BW}).

\paragraph{Step 2.} We now seek to establish \eqref{eq:heattrace3int} with $\alpha = 1$. Using once more Lemma \ref{L:tauberian}, by monotonicity of $t\mapsto \mathbf{H}_A(t)$, this boils down to showing that 
\begin{equation}
\int_0^\infty e^{-\lambda t} t \mathbf{H}_A(t) \dd t \sim \frac{c_\gamma \mu_\ph (A)}{\lambda} 
\end{equation}
 in probability as $\lambda \to \infty$. But using the bridge formula of Theorem \ref{T:bridge}, it suffices in fact to show 
 \begin{equation}
\label{eq:goalbridge}
\int_A \mu_{\ph}(\dd x) \int_0^\infty \bdec{x}{x}{u}\left[F(u) e^{-\lambda F(u)}1_{\{ \tau_\Sigma>u \}}\right] p^\Sigma_u(x, x) \dd u \sim \frac{c_\gamma \mu_\ph (A)}{\lambda} ,
 \end{equation}
where, as in \eqref{eq:clock}, $F(u) = \int_0^u e^{\gamma \ph(b_s)} \dd s $ is the time-change defining Liouville Brownian motion (applied here to a Brownian bridge $(b_s)_{0\le s \le u}$ of duration $u$ instead of Brownian motion). Informally, one can think of $\lambda>0$ here as a killing rate. When $\lambda \to \infty$ this has the effect of restricting the duration of the corresponding trajectory such that $F(t) \lesssim 1/ \lambda$ (but one should pay attention to the fact that a Brownian bridge time-changed by $F$ is not the same as a Liouville Brownian motion from $x$ to $x$). 

At a high level, the proof of \eqref{eq:goalbridge} is inspired by the ``elementary approach to Gaussian multiplicative chaos'' from \cite{BerestyckiGMC}: that is, we show convergence in $L^2$ of a (truncated) version of $\lambda$ times the left hand side of \eqref{eq:goalbridge}, (i.e., $\lambda \int_A \mu_{\ph}(\dd x) \int_0^\infty \bdec{x}{x}{u}\left[F(u) e^{-\lambda F(u)}1_{\{ \tau_\Sigma>u\}}\right] p^\Sigma_u(x, x) \dd u$), towards $c_\gamma \mu_\ph(A)$. The truncation is related to the notion of thick points of $\ph$ (see, e.g., \cite{BP}): points which are of a thickness greater than $\alpha$ (with $\alpha>\gamma$ given and chosen close to $\gamma$) do not contribute much to the expectation but make the second moment blow up, so they must first be removed. The calculations involved with the computation of these moments are quite involved and we refer to \cite{BW} for details. However, in order to illustrate the main idea, we explain how to at least compute the relevant expectation (one point function): 
\begin{lemma} \label{L:1pt}
\begin{equation}\label{eq:1pt}
\lim_{\lambda \to \infty} \E \left[\lambda \int_A \mu_{\ph}(\dd x) \int_0^\infty \bdec{x}{x}{u}\left[F(u) e^{-\lambda F(u)}1_{\{ \tau_\Sigma>u\}}\right] p^\Sigma_u(x, x) \dd u\right] = c \E[ \mu_\ph(A)],
\end{equation}
for some non-explicit $c>0$ (according to \eqref{eq:goalbridge} we expect $c = c_\gamma = 1/ [\pi(2- \gamma^2/2)]$). 
\end{lemma}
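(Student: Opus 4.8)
The plan is to compute the expectation in \eqref{eq:1pt} by first using Fubini to interchange the expectation over the field $\ph$ with the (independent) expectation over the Brownian bridge, then invoking the Cameron--Martin/Girsanov structure of Gaussian multiplicative chaos to ``root'' the Liouville measure at the base point $x$, and finally running a scaling argument that localises everything near $x$ and at small bridge durations $u$.

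Concretely, all integrands in \eqref{eq:1pt} are nonnegative, so Fubini--Tonelli rewrites the quantity under the limit as $\lambda\int_A \E[\mu_\ph(\dd x)\, J_\lambda(x,\ph)]$ with $J_\lambda(x,\ph)=\int_0^\infty \bdec{x}{x}{u}\!\left[F(u)e^{-\lambda F(u)}1_{\{\tau_\Sigma>u\}}\right]p^\Sigma_u(x,x)\,\dd u$, a nonnegative functional of $x$ and of $\ph$ (seen only along the bridge through the clock $F$). The key input is the standard GMC rooting identity: for nonnegative $\Phi$,
\[
\E\!\left[\mu_\ph(\dd x)\,\Phi(\ph)\right]=\rho_\Sigma(x)\,\dd x\ \tilde\E_x\!\left[\Phi(\ph)\right],
\]
where $\rho_\Sigma(x)\,\dd x=\E[\mu_\ph(\dd x)]$ is the one-point density of the Liouville measure (so $\int_A\rho_\Sigma(x)\,\dd x=\E[\mu_\ph(A)]$), and under $\tilde\P_x$ the field has the law of $\ph+\gamma\,g_\Sigma(x,\cdot)$; this is the precise sense in which reweighting by $\mu_\ph(\dd x)$ produces a $\gamma$-log singularity at $x$, i.e.\ the germ of a $\gamma$-quantum cone \cite{zipper}. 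Applying this with $\Phi=J_\lambda(x,\cdot)$, under which the clock becomes $F^{(x)}(u)=\int_0^u e^{\gamma^2 g_\Sigma(x,b_s)}e^{\gamma\ph(b_s)}\,\dd s$, reduces \eqref{eq:1pt} to showing that
\[
\lambda\,\E\!\left[\int_0^\infty \bdec{x}{x}{u}\!\left[F^{(x)}(u)e^{-\lambda F^{(x)}(u)}1_{\{\tau_\Sigma>u\}}\right]p^\Sigma_u(x,x)\,\dd u\right]\ \xrightarrow[\lambda\to\infty]{}\ c
\]
for a constant $c\in(0,\infty)$ \emph{independent of $x$ and of $\Sigma$}; then a uniform-in-$x$ bound on this quantity (combining $F^{(x)}e^{-\lambda F^{(x)}}\le 1/(e\lambda)$ with a lower bound on $F^{(x)}(u)$ for $u$ bounded away from $0$, itself following from Lemmas \ref{L:occ}--\ref{L:modcont}) lets dominated convergence pull the limit through $\int_A\rho_\Sigma(x)\,\dd x$ and yields $c\,\E[\mu_\ph(A)]$.

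For the displayed convergence, the weight $F^{(x)}(u)e^{-\lambda F^{(x)}(u)}$ concentrates on $F^{(x)}(u)\asymp 1/\lambda$; since $u\mapsto F^{(x)}(u)$ is increasing and vanishes at $0$, this forces the $u$-integral onto a window $u\asymp u_\lambda\to 0$, the large-$u$ range being negligible because there $F^{(x)}(u)$ is bounded below with overwhelming probability while $p^\Sigma_u(x,x)$ decays. On this window one replaces $p^\Sigma_u(x,x)$ by the Euclidean value $(2\pi u)^{-1}$, drops $1_{\{\tau_\Sigma>u\}}$, and replaces $g_\Sigma(x,\cdot)$ by $-\tfrac1{2\pi}\log|x-\cdot|$ up to the additive constant which is exactly absorbed into $\rho_\Sigma(x)$; then Brownian scaling of the bridge, $b_s=x+\sqrt u\,\beta_{s/u}$ with $\beta$ a unit bridge, together with the approximate scale- and translation-invariance of the GFF around $x$, turns the quantity into one depending on neither $x$ nor $\Sigma$, which converges to a universal constant $c$ --- concretely, the analogous bridge/clock functional computed on the $\gamma$-quantum cone, whose scaling-exactness (the ``self-similarity'' alluded to in the sketch of Theorem~\ref{T:Weyl}) is what makes this limit well defined.

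The main obstacle is this last scaling step: making the localisation near $x$ and at small $u$ rigorous, and controlling the error terms --- the $O(1)$ discrepancy between $g_\Sigma(x,\cdot)$ and $-\tfrac1{2\pi}\log|x-\cdot|$, the boundary term $1_{\{\tau_\Sigma>u\}}$, and, most delicately, the contribution of atypically small $u$ where the field near $x$ is anti-thick and the insertion weight $e^{\gamma^2 g_\Sigma(x,b_s)}$ interacts with the endpoints of the bridge (where the bare first moment $\E[F^{(x)}(u)]$ may even be infinite, so the suppression by $e^{-\lambda F^{(x)}}$ must be used rather than bypassed). This is exactly where scaling-exactness of the $\gamma$-quantum cone enters, and, as the sketch of \eqref{eq:goalbridge} indicates, it is also what forces the more involved $L^2$ analysis with a thick-point truncation when one upgrades this one-point statement to the second-moment bound. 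Identifying the resulting constant with $c_\gamma=1/[\pi(2-\gamma^2/2)]$ is a separate matter, not attempted here; as noted above, \cite{BW} first expressed $c$ through the quantum cone and then evaluated it by an indirect computation.
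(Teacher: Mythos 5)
Your setup coincides with the paper's: the rooting/Girsanov step (rewriting $\E[\mu_\ph(\dd x)\,\Phi(\ph)]$ as $R(x,\Sigma)^{\gamma^2/2}\dd x$ times the expectation under the field shifted by $\gamma G_\Sigma(x,\cdot)$), the replacement of $p^\Sigma_u(x,x)$ by $(2\pi u)^{-1}$, and the reduction to showing that the inner, rooted quantity tends to a constant independent of $x$ and $\Sigma$, followed by dominated convergence over $A$. Up to that point the proposal is essentially the paper's argument.

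The gap is that the lemma's entire content --- the \emph{existence} of that limit --- is exactly the step you set aside as ``the main obstacle''. Appealing to scaling-exactness of the quantum cone names the right ingredient but does not produce the limit: after Brownian scaling one still has a $\lambda$-dependent integral over all scales, and one must explain why it stabilises. The paper's mechanism is concrete: substitute $u=e^{-2t}$ so that $\dd u/u$ becomes $\dd t$ and the field at spatial scale $e^{-t}$ contributes, via the exact scaling relation, an independent Gaussian $B_t$; the change-of-coordinates drift then turns the integrand into $\cI\big(\lambda e^{\gamma R_t}F(1)\big)$ with $\cI(s)=se^{-s}$ and $R_t=B_t-(Q-\gamma)t$ a Brownian motion with negative drift independent of $F(1)$. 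The function $\cI$ localises the $t$-integral around the passage time $\tau$ at which $e^{\gamma R_\tau}=1/(\lambda F(1))$, and Williams' path decomposition shows that $(R_{\tau+t})_t$, recentred at $\tau$, converges as $\lambda\to\infty$ to a two-sided process $\hat R$ whose law does not involve $\lambda$, giving the limit $C=\E^*\int_{-\infty}^{\infty}\cI(e^{\gamma\hat R_t})\,\dd t$. Without this (or an equivalent renewal/stationarity argument on the cone) the limit is asserted rather than proved. A secondary point: the proposed domination $F^{(x)}e^{-\lambda F^{(x)}}\le 1/(e\lambda)$ cannot by itself justify dominated convergence in $x$, since it leaves $\int_0^{u_0}p^\Sigma_u(x,x)\,\dd u$, which diverges logarithmically at $u=0$; near $u=0$ one should instead use $Fe^{-\lambda F}\le F$ together with an estimate of the form $\E^*\bdec{x}{x}{u}[F^{(x)}(u)]\lesssim u^{\eta}$ (available from the occupation formula and Lemma \ref{L:modcont}, using $\gamma^2<4$ to integrate the insertion), reserving the $1/(e\lambda)$ bound for $u$ bounded away from $0$.
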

\begin{proof}[Sketch of proof of Lemma \ref{L:1pt}] By Girsanov's lemma (see \cite[Theorem 3.16]{BP}), the expectation in the left hand side of \eqref{eq:1pt} can be written as 
$$
\int_A R(x, \Sigma)^{\gamma^2/2} \E^*\left[ \int_0^\infty \bdec{x}{x}{u} \big[\cI ( \lambda F(u) ) 1_{\{ \tau_\Sigma>u\}} p^\Sigma_u(x,x) \dd u\big] \right] \dd x 
$$
where:
\begin{itemize}

\item $R(x, \Sigma)$ is the conformal radius of $\Sigma$ viewed from $x$ (thus, since $\ph$ is a GFF on $\Sigma$ with Dirichlet boundary conditions, $ \E[ \mu_\ph(\dd x) ] =R(x, \Sigma)^{\gamma^2/2} \dd x$, see \cite[Theorem 2.1]{BP});

\item  $\E^*$ denotes the ``measure rooted at $x$'': that is, $\ph$ has the law of a GFF on $\Sigma$ with Dirichlet boundary conditions, plus a function given by $\gamma G_\Sigma (x, \cdot)$; 

\item The function $\cI$ is defined by setting $\cI(s) = se^{-s}$ for any $s \ge 0$.
\end{itemize}
We approximate $p^\Sigma_u(x,x) $ by $1/(2\pi u)$, which is a good approximation if $u$ is small and $x$ is not close to the boundary of $\Sigma$. A Brownian bridge of duration $u$ has a range of order $\sqrt{u}$. However, we know from basic properties of the Gaussian free field that it is a good idea to re-parametrise the relevant spatial scale as an exponential function of some parameter interpreted as ``time'' (see, e.g., \cite[Theorem 1.59]{BP} which shows that circle average evolve like a Brownian motion on a logarithmic scale). We thus make the change of variables $u = e^{-2t}$ and thus, noting that $\dd t \propto \dd u / u$, it suffices to show that
$$
\lim_{\lambda \to \infty}  \E^* \int_0^\infty \bdec{x}{x}{e^{-2t}} \big[\cI ( \lambda F(e^{-2t}) ) 1_{\{ \tau_\Sigma> e^{-2t}\}}  \dd t\big]   = C  
$$
for some constant $C>0$. So far we assumed that $\ph$ was a GFF with Dirichlet boundary conditions in $\Sigma$. However, it is easier for the following computation to pretend that $\Sigma = \mathbb{D}$ is the unit disc, $x= 0$, and $\ph$ is an \emph{exactly scale-invariant field}, see \cite[Section 3.7]{BP}. (One could also do this argument with a so-called $\gamma$-quantum cone). In such a field, by definition, for any $0<r<1$, 
$$
(\ph_{r\eps} ( r z) )_{z\in B(0,1)} = (\Omega_r + \ph_\eps(z))_{z\in B(0,1)}
$$ 
where in the right hand side, $\Omega_r$ is a centered Gaussian random variable with variance $\log (1/r)$ which is independent of $\ph$, see \cite[Corollary 3.22]{BP}. Applying this with $r = e^{-t}$ (which is the relevant spatial scale), and writing $B_t$ instead of $\Omega_r$ (which, roughly speaking, corresponds to the circle average of $\ph$ at that scale), and exploiting also the scale-invariance of a Brownian bridge, we see that it suffices to show
\begin{equation}\label{eq:I}
\lim_{\lambda \to \infty}  \E^* \int_0^\infty \bdec{0}{0}{1} \big[\cI ( \lambda e^{\gamma (B_t  - (Q- \gamma) t)} F(1) ) 1_{\{ \tau_{e^{t} \mathbb{D}}> 1\}}  \dd t\big]   = C ,
\end{equation}
where $C>0$ is a constant and $Q = \tfrac{2}{\gamma} + \tfrac{\gamma}{2}$. Here $B_t$ is independent of $F(1)$ and is a centered Gaussian variable of variance $t$ and may thus be thought of as the marginal at time $t>0$ of a standard (one-dimensional) Brownian motion. The appearance of Brownian motion with drift $\gamma -Q$ here is not surprising in view of the change of coordinates formula of LQG (Theorem 2.8 in \cite{BP}; this also describes, essentially for the same reasons, the radial component of the $\gamma$-quantum cone which governs the local limit of the geometry of the field $\ph$ around the point $x$ under $\E^*$). 

Now let us recall that since $\cI(s) = se^{-s}$ we have $\cI(s) \to 0$ as $s\to \infty$. Thus if $t >0$ is fixed and $\lambda\to \infty$, the argument in the function $\cI$ of \eqref{eq:I} converges to 0. However it does not follow that the entire integral tends to 0: as $t\to \infty$, $B_t - (Q- \gamma) t \to - \infty$, so one can find $t$ very large so that $e^{\gamma ( B_t - (Q-\gamma) t)}$ compensates $\lambda F(1)$. For such values of $t$, the indicator function $1_{\{ \tau_{e^{t} \mathbb{D}}> 1\}} $ is essentially equal to one, so we will ignore this from now on. The above discussion suggests naturally to introduce the stopping time
$$\tau = \inf \{ t >0: e^{\gamma R_t} = \frac{1}{ \lambda F(1)} \}, \text{ where } R_t = B_t - (Q- \gamma) t \text{ is the radial component}.$$
Crucially, since the Brownian motion $B$ is independent from $F(1)$, then it is easy to describe the law of $R$ around time $\tau$: by William's path decomposition theorem \cite{Wil1974}, $(R_{\tau + t })_{t\in [-L, \infty)} = (\hat R_t)_{t\in [-L, \infty)}$ in distribution, where 
$$
 \hat R_t= 
\begin{cases}
B_t - (Q- \gamma)t & \text{ if } t \ge 0\\
\hat B_{|t|} + (Q- \gamma) |t| & \text{ if } t <0.
\end{cases}
$$
with $B$ a Brownian motion, $\hat B$ an independent Brownian motion conditioned so that $\hat B_s + (Q- \gamma) s \ge 0$ for all $s\ge 0$. The range of validity of this identity in distribution is the interval $[-L, \infty)$ where 
$$
L : = \sup \{ s>0: \hat B_s + (Q- \gamma) s = \lambda F(1)\}.
$$  
As $\lambda \to \infty$, this $L$ disappears to infinity, and thus we find 
$$
\E^* \int_0^\infty \bdec{0}{0}{1} \big[\cI ( \lambda e^{\gamma (B_t  - (Q - \gamma)t)} F(1) ) 1_{\{ \tau_{e^{t} \mathbb{D}}> 1\}}  \dd t\big] \to \E^* \int_{-\infty}^\infty \cI\left( \big (e^{\gamma \hat R_t} )\right) \dd t .
$$
Letting $C$ be the right hand side, this proves \eqref{eq:I} and thus (up to the approximations mentioned throughout the proof) concludes the proof of Lemma \ref{L:1pt}.
\end{proof}
As mentioned, similar but more technically slightly more involved arguments for the second moment can be used to prove \eqref{eq:goalbridge}. From this Theorem \ref{T:Weyl} also follows, as already explained. 
\end{proof}

\subsection{On-diagonal heat kernel behaviour}

As mentioned above, the proof of Theorem \ref{T:Weyl} relies on the following asymptotics shown in the proof: for any open set $A\subset \Sigma$, 
\begin{equation}\label{eq:summaryHT}
\mathbf{H}_A(t) = \int_A \mathbf{p}_t(x,x) \mu_\ph(\dd x) \sim \frac{c_\gamma}{t} \mu_\ph(A),
\end{equation}
in probability (in the sense that the ratio of the two sides tend to one in probability as $t\to 0$). This raises the following question: 
\begin{equation}\label{eq:pointwise}
\text{ Does it hold that } \mathbf{p}_t(x,x) \sim \frac{c_\gamma}{t} ,
\end{equation}
pointwise (for $\mu_\ph$-a.e. $x \in \Sigma$) in probability as $t \to 0$? If so, \eqref{eq:pointwise} would be the simplest explanation for \eqref{eq:summaryHT}. Let us mention two arguments (one against, one in favour) concerning the question in \eqref{eq:pointwise}. 

\begin{itemize}

\item When $t \to 0$, whether or not \eqref{eq:pointwise} holds depends on the geometry of the field very close to $x$. As this geometry fluctuates wildly from point to point, the pointwise behaviour of $\mathbf{p}_t(x,x)$ may be expected to also fluctuate wildly from point to point, so expecting \eqref{eq:pointwise} to hold is too naive. 

\item On the other hand, when we restrict to typical points for LQG (this is why we required \eqref{eq:pointwise} to hold only for $\mu_\ph$-a.e. $x \in \Sigma$) the geometry of the field $\ph$ in the vicinity of $x$ has a well-defined law, so after all \eqref{eq:pointwise} is not so unlikely as it might seem at first sight.  
\end{itemize}

In \cite{BW}, we were able to study this question by considering \textbf{annealed asymptotics} where we average over the law of the GFF. 
 Using the tools developed in that paper and discussed above in the proof of Theorem \ref{T:Weyl} we were able to show the following result on the Laplace transform of $\mathbf{p}_t (x,x)$: 
\begin{prop}[Theorem 1.4 in \cite{BW}] \label{P:hk}
Let $x$ be sampled from $\mu_\ph$. Then as $\lambda \to \infty$, 
$$
  \lambda \int_0^\infty e^{-\lambda t} t \mathbf{p}_t (x,x) dt  \Rightarrow J
$$
where $\Rightarrow$ denotes convergence in distribution (with respect to the law of $\ph$), where $J$ is a non-constant random variable, supported on $(0, \infty)$. 
\end{prop}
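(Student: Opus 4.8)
The plan is to run the argument behind Theorem~\ref{T:Weyl} at the level of a single $\mu_\ph$-typical point rather than integrating against $\mu_\ph$. First, apply the bridge identity (Theorem~\ref{T:bridge}) with $\psi(s)=\lambda s e^{-\lambda s}=\cI(\lambda s)$, where $\cI(s)=se^{-s}$; recalling that $\mathbf p_t$ is the killed heat kernel this gives, for $\mu_\ph$-a.e.\ $x$,
\begin{equation*}
Q_\lambda(x):=\lambda\int_0^\infty e^{-\lambda t}\,t\,\mathbf p_t(x,x)\dd t=\int_0^\infty \bdec{x}{x}{u}\big[\cI(\lambda F(u))\,1_{\{\tau_\Sigma>u\}}\big]\,p^\Sigma_u(x,x)\dd u .
\end{equation*}
Since $\cI$ vanishes both at $0$ and at $\infty$ and is concentrated on $s\asymp 1$, and since $F(u)\to 0$ continuously as $u\to 0$, the integral is carried by scales $u=u(\lambda)\to 0$; hence, as $\lambda\to\infty$, $Q_\lambda(x)$ becomes (up to $o_{\P}(1)$) a function of the restriction of $\ph$ to an arbitrarily small neighbourhood of $x$. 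Rooting at the $\mu_\ph$-sampled point $x$ (the Girsanov/size-biasing step in the proof of Lemma~\ref{L:1pt}) and zooming in, the law of $\ph$ around $x$ converges to that of a $\gamma$-quantum cone, and the macroscopic data ($\Sigma$, the conformal radius at $x$, the total mass $\mu_\ph(\Sigma)$) drop out in the limit; it therefore suffices to identify the limiting law of the corresponding functional $Q_\lambda^{\mathrm{cone}}(0)$ of the cone.

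On the cone one carries out the reduction of Lemma~\ref{L:1pt} \emph{exactly}: approximate $p^\Sigma_u(x,x)$ by $(2\pi u)^{-1}$, substitute $u=e^{-2t}$, Brownian-scale the duration-$e^{-2t}$ bridge to a duration-$1$ bridge, and use the radial Markov structure of the cone to write the field shift at scale $e^{-t}$ as $R_t=B_t-(Q-\gamma)t$, a Brownian motion with drift $Q-\gamma>0$ independent of the unit bridge. Then $F(e^{-2t})=e^{\gamma R_t}F(1)$ with $F(1)$ the quantum clock of the unit bridge under the cone field and $B\perp F(1)$, so that $Q_\lambda^{\mathrm{cone}}(0)$ becomes, up to vanishing errors,
\begin{equation*}
\frac1\pi\int_{-\infty}^\infty \bdec{0}{0}{1}\big[\cI\big(\lambda\,e^{\gamma R_t}F(1)\big)\,1_{\{\tau_{e^{t}\D}>1\}}\big]\dd t .
\end{equation*}
Now set $\tau=\inf\{t:\lambda e^{\gamma R_t}F(1)=1\}$, a.s.\ finite and, since $\lambda F(1)\to\infty$ a.s., tending to $+\infty$ as $\lambda\to\infty$. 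Shifting the integration variable by $\tau$ replaces the argument of $\cI$ by $e^{\gamma(R_{\tau+s}-R_\tau)}$, absorbing $\lambda$ and $F(1)$, while $1_{\{\tau_{e^{t}\D}>1\}}\to 1$; and William's path decomposition of the transient diffusion $R$ at the first-passage time $\tau$ (to a level tending to $-\infty$) shows that $(R_{\tau+s}-R_\tau)_s$ converges in law to the two-sided process $\hat R$ equal to $B_s-(Q-\gamma)s$ for $s\ge 0$ and to $\hat B_{|s|}+(Q-\gamma)|s|$ for $s<0$, with $\hat B$ a Brownian motion conditioned so that $\hat B_r+(Q-\gamma)r\ge 0$ for all $r\ge 0$. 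Passing to the limit gives $Q_\lambda(x)\Rightarrow J$ with
\begin{equation*}
J=\frac1\pi\int_{-\infty}^\infty \cI\big(e^{\gamma\hat R_s}\big)\dd s ,
\end{equation*}
a functional of $\hat R$ alone (the precise constant being recomputed in \cite{BW}, consistently with $\E[J]=c_\gamma$).

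It remains to verify the two asserted properties. For the support: $\cI(s)>0$ for every $s>0$, so the integrand in $J$ is strictly positive and $J>0$ a.s.; and since $\hat R_s\to-\infty$ linearly as $s\to+\infty$ (drift $-(Q-\gamma)$) while $\hat R_s\to+\infty$ linearly as $s\to-\infty$, and $\cI(e^{\gamma\hat R_s})=e^{\gamma\hat R_s-e^{\gamma\hat R_s}}$ decays exponentially in the first regime and super-exponentially in the second, the integral is a.s.\ finite, so $J\in(0,\infty)$ a.s. For non-constancy: one shows $\var(J)>0$ by exploiting the sensitivity of $J$ to the path of $\hat R$ — conditioning $\hat R$ to remain in a small neighbourhood of $0$ over a long time window (an event of positive probability) keeps $\cI(e^{\gamma\hat R_s})$ near its maximum $e^{-1}$ there and makes $J$ large, whereas on the complementary behaviour $J$ is small; exhibiting two positive-probability events on which $J$ lies in disjoint intervals suffices.

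The hard part is to turn the convergence-of-expectations argument of Lemma~\ref{L:1pt} into convergence in distribution. Concretely this requires: (i) a quantitative localisation estimate, showing that the contribution to $Q_\lambda(x)$ from scales $u$ bounded away from $0$ (equivalently, from bridges leaving a fixed neighbourhood of $x$, together with the boundary indicator $1_{\{\tau_\Sigma>u\}}$) is $o_{\P}(1)$ as $\lambda\to\infty$; (ii) a coupling of the rooted field near $x$ with a $\gamma$-quantum cone, plus continuity of the functional $Q_\lambda^{\mathrm{cone}}$ under it, giving $Q_\lambda(x)=Q_\lambda^{\mathrm{cone}}(0)+o_{\P}(1)$; and (iii) control of the interaction between the bridge scale $\sqrt u$ and the field scale $e^{-t}$ in the change of variables, and of the error in replacing $p^\Sigma_u(x,x)$ by $(2\pi u)^{-1}$. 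As for Theorem~\ref{T:Weyl} the technical engine is a second-moment estimate; here it is somewhat lighter, since one only needs tightness and the identification of a one-point limit rather than an $L^2$ law of large numbers, but the multifractal fluctuations of $\mu_\ph$ near $x$ still make these uniform bounds delicate.
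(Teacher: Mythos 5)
Your strategy is essentially the one the paper attributes to \cite{BW}: the survey states Proposition \ref{P:hk} without proof and indicates that it follows from the same toolkit as the Weyl law (bridge identity with $\psi(s)=\lambda se^{-\lambda s}$, Girsanov rooting at a $\mu_\ph$-sampled point, convergence of the rooted field to the $\gamma$-quantum cone, the substitution $u=e^{-2t}$ and Williams' path decomposition at the passage time $\tau$), which is exactly what you run at the level of a single point rather than in expectation as in Lemma \ref{L:1pt}. Your candidate limit $J=\pi^{-1}\int_{-\infty}^{\infty}\cI\big(e^{\gamma\hat R_s}\big)\dd s$ is moreover consistent with the paper's later identification $J=\mathbf{p}_1^{\cC}(0,0)$ through the scaling identity $\mathbf{p}_t^{\cC}(0,0)=t^{-1}\mathbf{p}_1^{\cC}(0,0)$, and you correctly isolate as the real technical content of \cite{BW} the steps you leave open (localisation to small scales, the coupling with the cone, and the tightness/second-moment bounds, including justifying the asymptotic independence of the radial process from $F(1)$ that both you and the paper's sketch assume).
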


The convergence is in distribution, meaning when we average over the law of $\ph$ -- i.e., this is an annealed convergence in distribution, as mentioned above. This result shows that the Laplace transform of $\mathbf{p}_t (x,x)$ is approximately $(1/ \lambda)$ times a nontrivial random variable, and so strongly \emph{suggests} the following conjecture: 

\begin{conj}\label{C:hk}
As $t\to 0$, we have the following convergence in distribution, $t\mathbf{p}_t (x,x)\Rightarrow X $, where $X$ is a nontrivial random variable. 
\end{conj}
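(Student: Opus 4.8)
\bigskip
\noindent\textbf{A strategy towards Conjecture \ref{C:hk}.} The plan is to identify the limiting law $X$ explicitly as that of the on-diagonal heat kernel at time one on the $\gamma$-quantum cone of Sheffield \cite{zipper}, and to deduce the convergence from the \emph{exact} scale invariance of the cone: this will show that, in the limit, the relevant quantity is a genuinely \emph{stationary} process in logarithmic time, and for a stationary process convergence in distribution of the one-dimensional marginals is automatic. This is, at the level of the pointwise heat kernel, the same mechanism (self-similarity of the quantum cone) that already drives the proof of Theorem~\ref{T:Weyl}.

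First I would set up the reduction to the cone. Sampling $x$ from $\mu_\ph$ and working under the rooted (Girsanov-tilted) measure $\E^*$, as in the proof of Lemma~\ref{L:1pt}, the field $\ph$ in a neighbourhood of $x$ agrees, after an explicit deterministic conformal change of coordinates and up to an independent harmonic (hence locally Hölder) correction, with the field of a $\gamma$-quantum cone centred at $x$ (see \cite[Chapter 4]{BP}). One then argues that $t\,\mathbf{p}_t(x,x)$ depends, up to an error tending to $0$ in probability as $t\to 0$, only on the restriction of $\ph$ to a Euclidean ball around $x$ whose radius shrinks (slowly) with $t$: heuristically, in time $t$ Liouville Brownian motion started at $x$ does not leave such a ball with overwhelming probability, so $\mathbf{p}_t(x,x)$ can be compared with the heat kernel of Liouville Brownian motion killed on exiting the ball. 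Combined with the coupling to the cone, this shows that the law of $t\,\mathbf{p}_t(x,x)$ under $\E^*$ is asymptotically that of $t\,\mathbf{p}_t^{\mathrm{cone}}(0,0)$, the on-diagonal heat kernel of Liouville Brownian motion on the $\gamma$-quantum cone, as $t\to 0$ on the cone as well.

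Second, on the cone I would exploit exact scale invariance. Writing $h$ for the cone field, there is a deterministic shift such that $z\mapsto h(e^{-s}z)$, recentred, has the same law as $h$; combining this with Brownian scaling and the scaling of the Gaussian multiplicative chaos clock $F$ appearing in Definition~\ref{D:LBM}, the time-change definition of Liouville Brownian motion yields an exact identity in law $t\,\mathbf{p}_t^{\mathrm{cone}}(0,0)\overset{d}{=}t'\,\mathbf{p}_{t'}^{\mathrm{cone}}(0,0)$ for any two times, and more precisely the statement that $Y_s:=e^{-s}\,\mathbf{p}_{e^{-s}}^{\mathrm{cone}}(0,0)$ defines a \emph{stationary} process in $s\in\R$. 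In particular its marginal law is constant, so $t\,\mathbf{p}_t^{\mathrm{cone}}(0,0)\Rightarrow X$ trivially as $t\to 0$, with $X$ equal in law to $Y_0=\mathbf{p}_1^{\mathrm{cone}}(0,0)$; transporting this back through the reduction of the previous step gives $t\,\mathbf{p}_t(x,x)\Rightarrow X$. Non-triviality of $X$ is then inherited from Proposition~\ref{P:hk}: after the substitution $t=v/\lambda$ the Laplace-transform quantity there equals $\int_0^\infty e^{-v}\,h(v/\lambda)\,dv$ with $h(t)=t\,\mathbf{p}_t(x,x)=Y_{-\log t}$ in the cone limit, which by stationarity converges to the corresponding average of $(Y_s)$; were $X$ (equivalently $Y_0$) almost surely constant, then $Y_s$ would be almost surely constant for every $s$, hence, by continuity of $t\mapsto\mathbf{p}_t$, the whole process $(Y_s)$ would be constant and so would its average, contradicting the non-constancy of $J$.

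The main obstacle is the localisation step in the reduction to the cone. Unlike in the Euclidean setting, the Liouville Brownian motion heat kernel does not enjoy Gaussian, or even sub-Gaussian, off-diagonal bounds: the multifractal structure of $\mu_\ph$, and in particular the presence of atypically thick points, produces heavy tails for exit times and for the heat kernel, so it is genuinely delicate to argue that the contribution to $\mathbf{p}_t(x,x)$ coming from trajectories that wander far from $x$ is negligible at the leading order $c_\gamma/t$. One would need quantitative off-diagonal upper bounds for $\mathbf{p}_t(x,y)$ (in the spirit of \cite{AndresKajino, MRVZ, GRV_hk}) sharp enough to localise the on-diagonal value, uniformly over $\mu_\ph$-typical $x$, together with control of the small bad event on which a thick point near $x$ spoils the estimate. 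A complementary, perhaps more robust, route would be to stay entirely within Laplace-transform land: upgrade Proposition~\ref{P:hk} to joint convergence in distribution of $\lambda\int_0^\infty e^{-\lambda t}t\,\mathbf{p}_t(x,x)\,dt$ at several comparable values of $\lambda$ simultaneously, recognise the joint limit as the corresponding functionals of the stationary process $(Y_s)$, and feed this extra stationarity input into a de Haan-type Tauberian argument to pass from the averaged to the pointwise statement; the stationarity used there again ultimately comes from the scale invariance of the cone, so the two approaches are two faces of the same coin.
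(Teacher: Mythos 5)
Your strategy coincides with the one the paper itself indicates for this conjecture (which is resolved as Theorem~\ref{T:hk} via the work \cite{BK}): reduce the local geometry at a $\mu_\ph$-typical point to the $\gamma$-quantum cone, and use the exact scale invariance of the cone (which simultaneously rescales mass and speeds up Liouville Brownian motion) to obtain the identity in law $\mathbf{p}_t^{\cC}(x,x)\overset{d}{=}t^{-1}\mathbf{p}_1^{\cC}(x,x)$, identifying $X=\mathbf{p}_1^{\cC}(0,0)$. Your stationarity-in-logarithmic-time reformulation and your identification of the localisation step as the real technical obstacle are both consistent with the paper's account.
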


Once again these are annealed asymptotics, where we average over the law of the GFF. The reason why Proposition \ref{P:hk} does not imply \eqref{C:hk} is that we do not know how to show that the convergence of the Laplace transform in distribution implies convergence of the family of random variables in distribution. (That is, we would need a version of Theorem \ref{L:tauberian} for convergence in distribution rather than in probability). However, the same results (Proposition \ref{P:hk} and Lemma \ref{L:tauberian}) are enough to answer the question in \eqref{eq:pointwise}: namely, it is \emph{not possible} that  $t\mathbf{p}_t (x,x) \to c$ in probability as $t\to 0$. Indeed, if we assumed such a convergence in probability for contradiction, we could apply Lemma \ref{L:tauberian} and this would contradict the fact that the random variable $J$ in Proposition \ref{P:hk} is nonconstant.

\medskip As a consequence of work currently in preparation with J. Klein \cite{BK} we are in fact able to resolve Conjecture \ref{C:hk}, and the random variable $X$ has furthermore a concrete description. 
\begin{thm}\label{T:hk}
As $t\to 0$, we have the following convergence in distribution, 
$$t\mathbf{p}_t (x,x)\Rightarrow X, $$ 
where $X = \mathbf{p}_1^{\cC} (0,0)$ and $\mathbf{p}_t^{\cC} (x,y)$ denotes the heat kernel on the $\gamma$-quantum cone. 
\end{thm}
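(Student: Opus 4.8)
The plan is to upgrade the annealed Laplace--transform statement of Proposition~\ref{P:hk} to a genuine distributional limit for $t\,\mathbf{p}_t(x,x)$, and to identify the limit by localising the short--time heat kernel onto the $\gamma$--quantum cone. The obstruction pointed out after Proposition~\ref{P:hk} --- that no Tauberian theorem inverts a Laplace transform at the level of distributions, because of the possibility of oscillation illustrated by $H(t)=(1+a\sin\log t)/t$ --- is not removed by a sharper Tauberian statement but is bypassed entirely: the quantum cone is \emph{exactly} scale invariant, so the candidate limiting process $s\mapsto e^{-s}\mathbf{p}^{\cC}_{e^{-s}}(0,0)$ is stationary in $s$ and simply cannot oscillate in law.

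Sampling $x$ from $\mu_\ph$, I would first show that the family $\{t\,\mathbf{p}_t(x,x)\}_{t\in(0,1)}$ is tight and that any subsequential limit is supported on $(0,\infty)$: the upper tightness comes from uniform second--moment bounds obtained exactly as in the proof of Lemma~\ref{L:1pt} (apply the bridge identity of Theorem~\ref{T:bridge}, approximate $p^\Sigma_u(x,x)$ by $1/(2\pi u)$, truncate the contribution of the thick points of $\ph$ where the second moment would otherwise diverge, and estimate the resulting Gaussian multiplicative chaos integrals), while the lower bound follows from the matching first--moment lower bounds already underlying Proposition~\ref{P:hk}. It then suffices to identify all subsequential limits. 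Fix $t_k\downarrow 0$. Using the bridge identity $\int_0^\infty \psi(t)\,\mathbf{p}_t(x,x)\,\dd t=\int_0^\infty \bdec{x}{x}{u}[\psi(F(u))]\,p^\Sigma_u(x,x)\,\dd u$ and the observation, as in Step~2 of the proof of Theorem~\ref{T:Weyl}, that once one localises near the quantum time $t_k$ only Brownian bridges of vanishing Euclidean diameter survive the exponential weighting $e^{-F/t_k}$, one checks that $t_k\,\mathbf{p}_{t_k}(x,x)$ is, up to $o(1)$ in probability, a functional of $\ph$ restricted to the random Euclidean ball $B(x,r_k)$ with $r_k\to 0$ at the scale on which the field accumulates quantum mass of order $t_k$. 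One then invokes the fact that, under the rooted measure $\E^{*}$, the field seen around $x$ at this scale --- recentred and rescaled so that a fixed amount of quantum area is visible --- converges in law to the $\gamma$--quantum cone field (the quantum--cone description of the local geometry of $\ph$ near a typical point already invoked in Lemma~\ref{L:1pt}, now needed quantitatively). Transferring the bridge representation of $t_k\,\mathbf{p}_{t_k}(x,x)$ through this coupling --- the rescaling by $t_k$ being precisely what turns ``quantum time $t_k$'' into ``quantum time $1$'' --- identifies the subsequential limit with $\mathbf{p}^{\cC}_1(0,0)$, independently of the subsequence.

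Combining these two points gives $t\,\mathbf{p}_t(x,x)\Rightarrow X$ with $X=\mathbf{p}^{\cC}_1(0,0)$. For coherence with Proposition~\ref{P:hk} one records that the scale invariance of the cone $(\C,h,0,\infty)$ --- rescaling space and adding the appropriate constant to the field multiplies the Liouville clock by a deterministic factor $c$ while preserving the law of the pointed surface, hence multiplies the on--diagonal heat kernel, a density with respect to the quantum area, by $c^{-1}$ --- forces $\mathbf{p}^{\cC}_{ct}(0,0)\overset{d}{=}c^{-1}\mathbf{p}^{\cC}_t(0,0)$ for every $c,t>0$, so $t\,\mathbf{p}^{\cC}_t(0,0)$ has the law of $X$ for all $t$; consequently $J\overset{d}{=}\lambda\int_0^\infty e^{-\lambda t}\,t\,\mathbf{p}^{\cC}_t(0,0)\,\dd t$ is a temporal average of a stationary process with common marginal $X$, exactly as Proposition~\ref{P:hk} together with the non--constancy of $J$ demands.

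The main obstacle is the localisation step. A time--changed Brownian bridge can, with non--negligible probability, make long Euclidean excursions, so confining the relevant bridges to an infinitesimal neighbourhood of $x$ requires a quantitative interplay between the exponential weighting in the small--$t$ regime and the uniform lower bound on the quantum mass of Euclidean balls away from the thick points of $\ph$ (Lemma~\ref{L:modcont} and its refinements). Carrying this out uniformly in the spatial and temporal rescalings, and controlling the error in the coupling with the quantum cone at the level of the second moment, is where the bulk of the work in \cite{BK} lies.
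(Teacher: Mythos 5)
Your plan follows essentially the same route as the paper: the paper proves this result (deferred to \cite{BK}) by comparing $\mathbf{p}_t(x,x)$ at a $\mu_\ph$-typical point with the heat kernel of the $\gamma$-quantum cone, which describes the local limit of the field near such a point, and then invoking the exact scale invariance of the cone, which yields the identity in law $t\,\mathbf{p}_t^{\cC}(0,0)\overset{d}{=}\mathbf{p}_1^{\cC}(0,0)$ and thereby bypasses the Tauberian obstruction --- precisely the two ingredients you isolate. You also correctly identify the localisation of the bridge representation as the main technical hurdle, which is where the substance of \cite{BK} lies.
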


 We refer to \cite[Definition 7.16]{BP} for the definition of quantum cones. While it would take us too far to give a proof of the above theorem (or even a sketch of a proof) of this result, let us mention one key idea in its proof. Essentially, the proof of this theorem exploits the exact invariance in law of quantum cones under rescalings which blow up their mass in any given domain by a fixed factor $e^{k\gamma}$, where $k >0$. It turns out that such a rescaling also speeds up the corresponding Liouville Brownian motion by the same factor $e^{\gamma k}$: this expresses the fact that, in two dimensions, the ``time'' for Brownian motion is essentially identical to  the ``volume'' of the region in which this particle moves (for instance, the time that it takes for a Liouville Brownian motion to leave a given region is roughly the mass of that region). This result in the following exact identity in law:
 $$
\mathbf{p}_t^{\cC}(x,x) = \frac1t\mathbf{p}_1^{\cC}(x,x).
$$

 \begin{remark} Let us make a few remarks on what this means concerning connections between this result and the Weyl law. 

\begin{enumerate}
\item The negative answer to \eqref{eq:pointwise} shows that the reason why \eqref{eq:summaryHT} holds is quite subtle. While it is not true that the asymptotics \eqref{eq:pointwise} holds pointwise, the heat trace behaves as if it did; this can only be caused by the facts that the fluctuations occurring from point to point (described implicitly in Theorem \ref{T:hk}) cancel each other out when integrating over an open set. 

\item The above suggests (and it is in fact possible to prove) the following interpretation of the Weyl constant $c_\gamma = 1/ [\pi (2- \gamma^2/2)]$: namely, 
\begin{equation}\label{eq:cgamma_cone}
\E [ \mathbf{p}^{\cC}_1(0,0)] = c_\gamma.
\end{equation}
Problem \ref{Prob:hk} below asks for further explicit calculations associated with the random variable $ \mathbf{p}^{\cC}_1(0,0)$ -- e.g., what are its moments of arbitrary order? 
\end{enumerate} 
 
 \end{remark}

The work \cite{BK} also describes the scaling behaviour of the next term of the heat trace, which features a surprising connection with the so-called KPZ formula. This will be discussed below. 

\section{Open problems}

We give below a selection of problems which we believe to be of fundamental importance in the geometric study of LQG. Some have already appeared in \cite{BW}; others appear here for the first time to our knowledge.  

\subsection{No symmetries}

Our first problem is perhaps the biggest in the entire list below, and addresses the question of Mark Kac, \emph{can you hear the shape of a drum?}, in the LQG context. Perhaps surprisingly, our conjecture is that the answer is (a.s.) yes. 

\begin{problem}\label{P:kac} One \emph{can} hear the shape of LQG. In other words,  given a domain $\Sigma$ and the eigenvalue sequence $(\bl_n)_{n\ge 1}$ of LQG, one can measurably recover the underlying Gaussian free field $\ph$. More precisely, there exists a measurable function $\Phi = \Phi_\Sigma: \R^\N \to H^{-1} (\Sigma)$ such that 
$$
\Phi_\Sigma( (\bl_n)_{n\ge 1}) = \ph, \text{ a.s.} 
$$
In fact, we conjecture that it is possible to measurably recover the pair $(\Sigma, \ph)$ up to isometry, i.e., up to equivalence of quantum surfaces, in the sense of \cite{DS}. 
\end{problem}

This conjecture is likely very hard. Note that a remarkable result of Zelditch \cite{Zelditch} shows that spectral determination is \emph{generically} possible, which essentially suggests that the set of potential counterexamples to the above conjecture should have ``measure zero''.

\medskip A possibly simpler problem with a similar flavour asks about the dimension of the eigenspaces. In the presence of symmetries (e.g. on the sphere, or tori with appropriate sidelengths) it is possible that the same eigenvalue corresponds to several orthogonal eigenfunctions. However, in the context of LQG we expect that no such thing is possible:

\begin{problem}
For each $n\ge 1$, let $\dim (E_{\bl_n})$ denote the dimension (in the sense of vector spaces) of the eigenspace associated to the eigenvalue $\bl_n$ in the bounded domain $\Sigma$. Show that 
$$
\dim ( E_{\bl_n}) = 1, a.s.
$$
for all $n\ge 1$. 
\end{problem}

\subsection{Extensions of the Weyl law}

Let us start with what should be an easy problem. In Theorem \ref{T:Weyl} we described the Weyl law when the domain $\Sigma$ is bounded. However, it is perfectly possible to have domains $\Sigma$ which are unbounded and yet the corresponding semigroup (or Green function) of Liouville Brownian motion is trace-class (cf. \cite[Chapter 1.9]{Davies}, for instance Example 1.9.5, in the deterministic case).

\begin{problem}
What can be said about the asymptotics of the eigenvalue counting function and the heat trace when the domain is unbounded and the Green function is trace-class?
\end{problem}

A more substantial problem concerns the critical case $\gamma =2$. In that case it is well known that one can define (through a more careful renormalisation than Theorem \ref{T:GMC}) a measure corresponding $\mu^*_\ph (\dd x) = e^{2 \ph (x) - 2\E[ \ph(x)^2] } \dd x$ \cite{DRSV1, DRSV2} and a corresponding Liouville Brownian motion (see \cite{RV_critical}, though it should be noted that the construction in that paper contains a flaw which was recently fixed by Lacoin \cite{Lacoin}). 

\begin{problem}
Does critical Liouville Brownian motion have a continuous or discrete spectrum? Does a Weyl law hold in this case? 
\end{problem}

We note that, even though the value of the constant $c_\gamma$ in Theorem \ref{T:Weyl} blows up when $\gamma \to 2$, the right hand side of Theorem \ref{T:Weyl} is in fact well behaved: by a result of Aru, Powell and Sepulveda \cite{APScritical}, $$\lim_{\gamma \to 2} c_\gamma \mu_\ph (\Sigma) = c_* \mu^*_\ph(\Sigma),$$ 
where the constant $c_*$ is explicit (and depends on some choices in the normalisation of $\mu_\ph^*$).  This suggests that a Weyl law might actually hold in the critical case.  

Our next two problems are about sharpening the mode of convergence in Theorem \ref{T:Weyl}, which only shows convergence in probability. 
\begin{problem}
Show that 
$$
\frac{\mathbf{N}(\lambda)}{\lambda} \to c_\gamma \mu_\ph(\Sigma)
$$
almost surely as $\lambda \to \infty$. 
\end{problem}

\begin{problem}
Does $\mathbf{N}(\lambda)$ satisfy a Central Limit Theorem (as we average over the realisation of the GFF):
$$
\frac{\mathbf{N}(\lambda) - c_\gamma \lambda \mu_\ph(\Sigma)}{\sqrt{\lambda}} \Rightarrow \cN(0, \sigma^2) 
$$
where $\Rightarrow$ denotes convergence in distribution (with respect to the law of the GFF)? Here $\sigma^2$ should be identified. 
\end{problem}

The above problem asks only about the size of the typical fluctuations of the law of $\mathbf{N}(\lambda)$ at a large but \emph{fixed} $\lambda$ as we average over the law of the GFF. It should not be confused with results of the style of \eqref{eq:Weyl_next} which concerns the next term in the expansion of $\mathbf{N}(\lambda)$ as $\lambda \to \infty$, which is discussed in more details a bit below.

\medskip In the deterministic case, a classical conjecture of P\'olya \cite{Pol1954} states that, for Dirichlet eigenvalues, the eigenvalue counting function lies always below its Weyl limit, i.e., $N(\lambda) \le c_0 \lambda$. P\'olya proved it for so-called tiling domains \cite{Polya_tilingdomains} -- i.e., for domains which can be used to tile the plane, such as a square or a triangle or a hexagon). The case of Euclidean balls was established by Filonov et al. \cite{FLPS2023} only very recently. A closely related result is the Berezin--Li--Yau inequality \cite{Berezin, LiYau} which, informally, says that the conjecture holds for Euclidean domains in a Cesaro sense. The inequalities are reversed for Neumann eigenvalues. 

\medskip It is natural to ask if the same should be expected to hold in the LQG setting. 

\begin{problem}
Do the Dirichlet and Neumann eigenvalues satisfy P\'olya's conjecture or at least a version of the Berezin--Li--Yau inequalities?
\end{problem}

In some sense this problem is difficult because this concerns the behaviour of large eigenvalues as well as those not too large. Numerical experimentation suggests that for relatively low eigenvalues  the behaviour is closer to the Riemannian behaviour than that of LQG, perhaps because the multifractal structure of the geometry is only apparent at short wavelength. Since $c_\gamma> c_0$, this may mean that the conjecture is more easily satisfied for the Dirichlet case than for the Neumann case.

\medskip A different type of extension concerns models \emph{thought} to be related to LQG in the scaling limit. A particularly natural example is that \textbf{random planar maps} (see \cite[Chapter 4]{BP} for a survey of what is generally expected concerning the relations between planar maps and LQG).

\begin{problem}\label{P:Weyl_maps}
Do the eigenvalues of a random planar map $M_n$ obey a Weyl law? That is, do the eigenvalues of the matrix $I- P_n$ (where $P_n$ is the transition matrix of random walk on $M_n$) grow linearly? What is the ``Weyl constant'' (analogue of $c_\gamma$ in Theorem \ref{T:Weyl})?  
\end{problem} 

In the above problem the simplest is to consider the $k$th eigenvalue, letting first $n\to \infty$, then $k \to \infty$. But it also makes sense (and is in fact more interesting) to ask whether one can take $k$ growing with $n$ (the maximal possible value of $k$ is then $k = n$ if there are $n$ vertices). 

One advantage of the above problem is that such questions can be formulated intrinsically, i.e., without appealing to an embedding of the map: the eigenvalues are defined purely in combinatorial terms. Note that the spectral gap (i.e., smallest nonzero eigenvalue of $I-P_n$) is known by work of Gwynne and Hutchcroft \cite{GwynneHutchcroft} and Gwyne and Miller \cite{GwynneMiller}
to be of order $n^{-1/4 + o(1)}$, so the Weyl constant presumably scales like $n^{-1/4}$ with $n$.

\subsection{Spectral expansion of heat trace and boundary length}

As we already mentioned in Section \ref{SS:Weyl_classical}, it is natural to ask about further terms in the expansion of the eigenvalue counting function $\mathbf{N}(\lambda)$, for instance to obtain more spectral invariants. We have already mentioned that even in the classical (deterministic) case, Weyl's conjecture for the next term \eqref{eq:Weyl_next} remains open. However, Courant's theorem \eqref{eq:Courant}) gives an upper bound on the discrepancy and it is natural to ask if the same result holds in LQG:
\begin{problem}
Show that Courant's bound holds in LQG:
\begin{equation}\label{eq:CourantLQG}
\big|\mathbf{N}(\lambda) - c_\gamma \lambda\mu_\ph(\Sigma)\big| = o ( \lambda^{1/2} \log \lambda),
\end{equation}
say, in probability (the left hand side, divided by $\lambda^{1/2} \log \lambda$, tends to zero in probability). 
\end{problem}

In reality, numerical experiments suggest that the true order of deviations of the left hand side of \eqref{eq:CourantLQG} may be much smaller than $\lambda^{1/2}$ and seems to be characterised by a different exponent. In the  work in preparation with J. Klein \cite{BK} we make progress on this question by considering the analogous question in terms of the heat trace. Recall that, in the determinstic case, an expansion of the heat trace \eqref{eq:heattrace_classical2} is well known and formally matches the Weyl conjecture (in particular, the first correction term is associated to the boundary length and scales like $1/\sqrt{t}$) In the LQG case 
we are able to compute explicitly the exponent, which is, surprisingly, not equal 1/2 but is instead connected to the so-called \textbf{KPZ scaling exponent} of $\partial \Sigma$, at least provided that the boundary conditions of the field $\ph$ are not trivial (zero) on $\partial D$. Thus suppose for instance that $\ph$ is a $\gamma$-quantum cone in the entire plan, which we restrict to the bounded domain $\Sigma$, and let $\Delta$ be the KPZ scaling exponent of $\partial\Sigma$, i.e., $\Delta$ solves the equation
\begin{equation}\label{eq:KPZ}
x = \frac{\gamma^2}{4} \Delta^2 + (1- \frac{\gamma^2}{4}) \Delta
\end{equation}
where $x$ is the Euclidean scaling exponent of $\partial \Sigma$, i.e., $\dim (\partial \Sigma) = 2(1-x)$ (thus $x = 1/2$ if $\partial \Sigma$ is smooth). It can be shown in various rigorous senses (\cite{DS, HKPZ, RhodesVargas}, see also \cite[Chapter 3.13]{BP} and references therein for a more thorough discussion) that $1-\Delta$ measures the fractional dimension of $\partial \Sigma$ with respect to that of the ambient space. One of the main results of \cite{BK} is the following result:
\begin{thm}[\cite{BK}]\label{T:traceBK}
Let $\ph$ and $\Sigma$ be as above. Then 
$$
\E( \mathbf{H}(t)) = c_\gamma \E(\mu_\ph(\Sigma)) - t^{-(1- \Delta) + o(1)} 
$$
as $t\to 0$. 
\end{thm}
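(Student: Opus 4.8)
The plan is to isolate the boundary contribution by comparing $\mathbf{H}(t)$ with the heat trace of the \emph{unrestricted} Liouville Brownian motion in the cone, and to analyse the difference through the bridge identity (Theorem~\ref{T:bridge}), reusing the machinery behind the proof of Theorem~\ref{T:Weyl}. Write $\mathbf{p}^{\cC}_t(x,y)$ for the heat kernel of Liouville Brownian motion in the cone field $\ph$ with no spatial restriction. The strong Markov property at $\tau_\Sigma$ gives the on-diagonal decomposition $\mathbf{p}_t(x,x) = \mathbf{p}^{\cC}_t(x,x) - \mathbf{q}_t(x,x)$, where $\mathbf{q}_t(x,x)\ge 0$ collects the trajectories that exit $\Sigma$ before time $t$. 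For the first term, Girsanov's lemma together with the re-rooting invariance of the $\gamma$-quantum cone (under the measure rooted at $x$ the local picture is a $\gamma$-quantum cone centred at $x$) gives $\E^*_x[\mathbf{p}^{\cC}_t(x,x)] = t^{-1}\E[\mathbf{p}^{\cC}_1(0,0)] = c_\gamma/t$, using the exact cone scaling $\mathbf{p}^{\cC}_t(x,x) = t^{-1}\mathbf{p}^{\cC}_1(x,x)$ and \eqref{eq:cgamma_cone}; hence $\E\!\int_\Sigma \mathbf{p}^{\cC}_t(x,x)\,\mu_\ph(\dd x) = (c_\gamma/t)\,\E[\mu_\ph(\Sigma)]$, with no correction term. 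So the full correction comes from the boundary deficit $\mathbf{D}(t):=\E\!\int_\Sigma \mathbf{q}_t(x,x)\,\mu_\ph(\dd x)$, and it remains to prove $\mathbf{D}(t)=t^{-(1-\Delta)+o(1)}$.

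To access $\mathbf{D}(t)$ I pass to a Laplace transform, as in Step~2 of the proof of Theorem~\ref{T:Weyl}. Applying the bridge identity with $\psi(s)=se^{-\lambda s}=\lambda^{-1}\cI(\lambda s)$ (recall $\cI(s)=se^{-s}$) to $\mathbf{q}_t$, and rooting at $x$ by Girsanov, yields
\begin{equation*}
\int_0^\infty e^{-\lambda t}\,t\,\mathbf{D}(t)\,\dd t \;=\; \frac1\lambda\int_\Sigma \rho(x)\,G_\lambda(x)\,\dd x,\qquad
G_\lambda(x):=\E^*_x\!\left[\int_0^\infty \bdec{x}{x}{u}\!\left[\cI(\lambda F(u))\,1_{\{\tau_\Sigma\le u\}}\right]\frac{\dd u}{2\pi u}\right],
\end{equation*}
where $\rho(x)\,\dd x=\E[\mu_\ph(\dd x)]$. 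The weight $\cI(\lambda F(u))$ concentrates on trajectories whose quantum clock satisfies $F(u)\asymp 1/\lambda$; by the scale invariance of the $\gamma$-quantum cone -- exactly the input used to derive \eqref{eq:I} -- such an excursion from $x$ explores a region around $x$ of quantum mass of order $1/\lambda$, and the indicator $1_{\{\tau_\Sigma\le u\}}$ then forces that region to reach $\partial\Sigma$. Consequently $G_\lambda(x)$ is bounded below by a constant when $x$ lies in the quantum $\tfrac1\lambda$-collar $C_{1/\lambda}$ of $\partial\Sigma$ (points separated from $\partial\Sigma$ by quantum mass $\lesssim 1/\lambda$), and decays rapidly -- with Gaussian-type tails in the logarithmic spatial variable -- away from it. Carrying this out quantitatively, including the subpolynomial (hence $o(1)$-in-the-exponent) corrections caused by the multifractal fluctuations of $\mu_\ph$, should give $\int_\Sigma\rho(x)G_\lambda(x)\,\dd x \asymp \E[\mu_\ph(C_{1/\lambda})] = \lambda^{-\Delta+o(1)}$. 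This last equality is the KPZ input: if $\partial\Sigma$ has Euclidean scaling exponent $x$, the quantum mass of the quantum $\epsilon$-collar of $\partial\Sigma$ scales like $\epsilon^{\Delta}$ with $\Delta$ the solution of \eqref{eq:KPZ} (sanity checks: a point has $\Delta=1$ and collar mass $\asymp\epsilon$; the whole plane has $\Delta=0$ and collar mass $\asymp 1$; a smooth curve has $\Delta=\tfrac12$, and at $\gamma=0$ its collar mass is $\asymp\sqrt\epsilon$, recovering the classical $t^{-1/2}$ boundary term).

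Given this, $\int_0^\infty e^{-\lambda t}t\,\mathbf{D}(t)\,\dd t = \lambda^{-(1+\Delta)+o(1)}$, and since the exponent $\Delta\in(0,1)$ now lies strictly inside the Karamata range, a Tauberian argument (Lemma~\ref{L:tauberian} applied to the measure of density $t\,\mathbf{D}(t)$, supplemented by a monotonicity or asymptotic-monotonicity control on $t\,\mathbf{D}(t)$) inverts this to $t\,\mathbf{D}(t)=t^{\Delta+o(1)}$, i.e. $\mathbf{D}(t)=t^{-(1-\Delta)+o(1)}$. Together with the exact evaluation of the cone part this gives the assertion (with the leading term read, as in \eqref{eq:summaryHT}, as $c_\gamma t^{-1}\E[\mu_\ph(\Sigma)]$).

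The crux, and the step I expect to be genuinely hard, is converting the analytic deficit $G_\lambda(x)$ into the geometric quantity $\E[\mu_\ph(C_{1/\lambda})]$ with the correct exponent. Two ingredients must be controlled together. First, one needs a genuine \emph{set} version of KPZ for $\partial\Sigma$ -- not the pointwise ball-exponent version -- in the spirit of \cite{DS,HKPZ,RhodesVargas}; since $\partial\Sigma$ is only assumed to have a prescribed Euclidean dimension and no regularity, this requires working with a near-optimal Euclidean covering of $\partial\Sigma$ and tracking how the random conformal factor of the cone redistributes $\mu_\ph$-mass in such neighbourhoods. Second, the identification ``quantum clock $\lesssim 1/\lambda$ for an excursion from $x$'' $\longleftrightarrow$ ``$x$ separated from $\partial\Sigma$ by quantum mass $\lesssim 1/\lambda$'' holds only up to subpolynomial fluctuations, and one must show these do not accumulate when integrating $x$ across the collar; here the exact self-similarity of the quantum cone and the Williams-decomposition picture from the proof of Lemma~\ref{L:1pt} are used again, now localised near $\partial\Sigma$. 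The remaining steps -- the Girsanov/bridge computation, the exact cone scaling, and the sub-critical Tauberian inversion -- are routine given the toolbox assembled above.
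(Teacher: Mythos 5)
The paper does not actually prove Theorem~\ref{T:traceBK}; it quotes it from the work in preparation \cite{BK}, so your proposal can only be judged on its own merits. Your overall architecture (kill the trajectories at $\partial\Sigma$, evaluate the bulk via the exact scaling $\mathbf{p}^{\cC}_t(x,x)=t^{-1}\mathbf{p}^{\cC}_1(x,x)$ and \eqref{eq:cgamma_cone}, express the boundary deficit through the bridge identity, and identify it with the expected quantum mass of a quantum collar via the heat-kernel formulation of KPZ as in \cite{HKPZ}) is sensible and consistent with every hint the paper gives. But as written it has two genuine gaps, and they sit exactly where the content of the theorem lies.

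First, the claim that the bulk term carries \emph{no} correction is not justified. Girsanov rooting at $x$ produces the cone field plus $\gamma G(x,\cdot)$, which is a $\gamma$-quantum cone centred at $x$ only as a local (zoom-in) limit, not exactly at every scale; hence $\E^*_x[\mathbf{p}^{\cC}_t(x,x)]=c_\gamma/t$ is an asymptotic statement with an error term governed by the rate at which the rooted field converges to the cone. Unless you show that error is $O(t^{-(1-\Delta)+o(1)})$ uniformly enough to survive the $\dd x$-integration, you cannot conclude that the boundary deficit is the dominant correction -- a priori the bulk error could swamp it. Second, the identification $\int_\Sigma\rho(x)G_\lambda(x)\,\dd x\asymp\E[\mu_\ph(C_{1/\lambda})]=\lambda^{-\Delta+o(1)}$ is asserted, not proved, and you say so yourself. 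Both directions are nontrivial: the upper bound requires showing that $G_\lambda(x)$ really decays fast outside the quantum collar despite the multifractal fluctuations of $F$ (recall that in the proof of Theorem~\ref{T:Weyl} the analogous expectations are dominated by atypically thick points and a truncation is indispensable -- the same danger arises here and could shift the exponent of an expected quantity away from the ``typical'' KPZ exponent), and the lower bound requires a set-level, expected-measure KPZ statement for $\partial\Sigma$ under the rooted cone law, which is not available off the shelf for a boundary assumed only to have a prescribed Euclidean scaling exponent. Until these two steps are carried out, what you have is a credible roadmap with the right exponent (your $\gamma=0$ and quantum-disc sanity checks are reassuring), not a proof.
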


In the above theorem, the domain $\Sigma$ is not necessarily assumed to be smooth. If it is, then $ x= 1/2$ and solving the quadratic equation \eqref{eq:KPZ} we find 
$$\Delta= \frac12 + \frac{2}{\gamma^2}\left(\sqrt{1+ \frac{\gamma^4}{16}} -1\right) $$ 
Interestingly, if $\Sigma$ is instead a so-called \textbf{quantum disc} (which in the case $\gamma = \sqrt{8/3}$, is thought to be related to the scaling limit of planar maps with the topology of a disc, and which one can think of as bounded by an SLE$_\kappa$ type curve, where $\kappa = \gamma^2 \in (0, 4)$), then the Euclidean scaling exponent $x$ can be computed via the known dimension of these curves by Beffara's result \cite{Beffara}; we find $x = 1/2 - \kappa/16$ and solving the quadratic equation \eqref{eq:KPZ} leads to $\Delta = 1/2$. Thus the next term of the heat trace \emph{is actually} of order $t^{1/2}$ for such quantum discs !

In any case, we conjecture in \cite{BK} that the same scaling governs the next term for the eigenvalue counting function. Our next problem is to prove it. 

\begin{problem}
Show that 
$$
\E [\mathbf{N}(\lambda)] = c_\gamma \lambda\E[\mu_\ph(\Sigma)]-  \lambda^{1- \Delta + o(1)}
$$
\end{problem}
We are not certain if we should expect the above conjecture to hold if we don't take expectations on both sides.

\medskip Going further in the expansion of the heat trace $\mathbf{H}(t)$ than Theorem \ref{T:traceBK} from \cite{BK} seems challenging, but it would be of great interest to pursue this. In particular, we have already mentioned that, in the deterministic situation we know from \eqref{eq:heattrace_classical2} that the second term behaves like the boundary length with scaling $1/ \sqrt{t}$. The fact that the scaling in Theorem \ref{T:traceBK} differs from the classical scaling $1/\sqrt{t}$ has an interesting consequence on the notion of boundary length that \emph{should} be relevant here. Namely, we believe that the only possible notion of boundary length that can be ``heard'' through the heat trace is that of the fractional boundary length 
\begin{equation}
\label{eq:fraclength}
\cL : = \nu(\partial \Sigma), \text{ where } \nu(\dd x) := e^{\gamma ( 1- \Delta) \ph(x)} \sigma(\dd x),
\end{equation}  
and where $\sigma$ is the deterministic length measure on $\partial \Sigma$. That is, $\nu$ is the Gaussian multiplicative chaos measure with \textbf{fractional coupling constant} $\gamma(1-\Delta)$  (one might more naturally have expected the exponent $\gamma /2$ to be relevant, as these are how quantum boundary lengths are defined in LQG, see e.g. \cite{DS}).

\begin{problem}
Is it true that $\mathbf{H}(t) =  c_\gamma \mu_\ph(\Sigma) - t^{-(1- \Delta)}\cL + o(t^{- (1-\Delta)})$ as $t\to 0$?
\end{problem}
\medskip

Going further, we have already mentioned that heat trace asymptotics in the classical/deter\-mi\-nistic setting involve terms related to the $\zeta-$\textbf{regularised determinant of the Laplacian}. 

\begin{problem} Is it possible to define a $\zeta$-regularised determinant of Liouville Brownian motion, and how does it relate (if at all) to asymptotics of the heat trace $\mathbf{H}(t)$ as $t\to 0$?
\end{problem}

One reason this problem is interesting, beyond its potential connection to spectral geometry, is that one can rewrite the Polyakov action functional \eqref{E:Polyakov} defining LQG in terms of the $\zeta$-regularised determinant (see e.g. \cite{TakhtajanZograf, BOW}). In other words one can define LQG by considering powers of the determinants of the Laplacian; see \cite{AngParkPfefferSheffield} and references therein for some partial successes in this direction. (The power to which the determinant is raised is traditionally written in the form $-c/2$, where $c$ corresponds to  the \textbf{central charge} of LQG. It is related to the coupling constant $\gamma$ parametrising LQG via the relation $ c= 25- 6 Q^2$, where (as before) $Q = \tfrac{2}{\gamma} + \tfrac{\gamma}{2}$.) As a concrete consequence, it should be the case that, reweighing the law of LQG with coupling constant $\gamma$ by a power of its zeta-regularised determinant (assuming to the above problem is positive so this indeed well defined), we should obtain an LQG law corresponding to a different coupling constant.





\subsection{Heat kernel asymptotics}

As mentioned throughout this note, asymptotics of the heat kernel are intimately related to spectral geometry. What more can be said about the behaviour of the heat kernel $\mathbf{p}_t(x,x)$ as $t\to 0$? 
We have already ruled out the possibility that
$ \mathbf{p}_t(x,x) \sim \tfrac{c_\gamma}{t}$ (see \eqref{eq:pointwise} and Theorem \ref{T:hk}). It is thus legitimate to ask about \textbf{quenched} asymptotics, i.e., as we freeze the realisation of the GFF $\ph$ and ask about the behaviour of $\mathbf{p}_t(x,x)$ as $t\to 0$. By analogy with what is known on some other random fractals such as random trees (see notably \cite{Croydon} and \cite{CH2008, CH2010}) we expect nontrivial upper and lower logarithmic (or doubly logarithmic) fluctuations. 

\begin{problem}
Does there exists constants $c_\pm\in \R$, $\beta_\pm\in \R$ such that if $x=0$ in a $\gamma$-quantum cone, 
$$
\limsup_{t\to 0} \frac{\mathbf{p}_t(x,x)}{t\ell(t)^{\beta_+} }= c_+, \quad \quad 
\liminf_{t\to 0} \frac{\mathbf{p}_t(x,x)}{t\ell(t)^{\beta_-} }= c_-
$$
where $\ell(t) = \log 1/t$ or $\ell(t) = \log \log 1/t$.  
\end{problem}

By standard Borel--Cantelli arguments, it is likely that the exponents $\beta_-, \beta_+$ should be related to the tail at 0 and at infinity respectively of the random variable $X = \mathbf{p}^{\cC}_1(0,0)$ appearing in Theorem \ref{T:hk}. This raises the following question. 

\begin{problem}\label{Prob:hk}
Can the tail behaviour of $X = \mathbf{p}^{\cC}_1(0,0)$ at zero and infinity be computed? More generally, can moments of $X$ be computed exactly?
\end{problem}

Recall that, as a byproduct of \cite{BK} and Theorem \ref{T:Weyl} we have 
$$
\E( X) = c_\gamma = \frac{1}{\pi(2 - \gamma^2/2)}.
$$
This suggests further exact computations are indeed possible. Note also that it is proved in \cite{BK} that $\E(X^n) <\infty$ for all $n\ge 1$.

\subsection{Quantum chaos}

The Polyakov action defining Liouville quantum gravity has as a ground state (i.e., state of ``maximal probability'') a function $u$ which is a solution to Liouville's equation, known since the pioneering works of Picard \cite{Picard} and Poincaré \cite{Poinc} to describe the conformal factor of a surface of constant negative curvature. For this reason, LQG can be thought of as a theory of random surfaces which are \emph{hyperbolic} to first order. One should therefore expect LQG to display phenomena that are in some sense characteristic of hyperbolic geometry. See \cite{LacoinRhodesVargas, LacoinRhodesVargas2} for a large deviation principle (as $\gamma \to 0$) which connects LQG to solutions of Liouville's equation, and for more on this point of view.

This line of thinking led us in \cite{BW} to propose a bold connection to a phenomenon known as \textbf{quantum chaos}, widely believed to take place for (deterministic) hyperbolic surfaces. Roughly speaking, this phenomenon is the manifestation, at the quantum level, of a certain chaotic behaviour on such surfaces, more precisely the chaoticity of the geodesic flow. To explain this more precisely, the geodesic flow describes the evolution on the surface of a particle with a given initial position and velocity, and moving at unit speed on a geodesic in that initial direction. It is known that on a compact hyperbolic surface $(M,g)$, the geodesic flow is \textbf{chaotic}: the position of this particle quickly forgets its initial position and becomes uniformly distributed (mixed). Quantum chaos predicts, as we have said, that this chaoticity also manifests itself at the quantum level: we now study the position of a ``quantum particle'' instead of one evolving according to the geodesic flow (thought of as a ``classical'' dynamics). Roughly speaking, according to the rules of quantum mechanics, a particle with energy $\lambda_n$ corresponds to a wave-function $f_n$ (the eigenfunction corresponding to $\lambda_n$) and so has a position on $M$ which is a probability distribution given by $|f_n|^2 \dd v_g$, where $\dd v_g$ is the volume form associated to the metric $g$ on $M$, normalised to be a probability distribution on $M$. Thus, according to quantum chaos, as the energy level $\lambda_n \to \infty$ (corresponding to a so-called semi-classical limit), the corresponding position of the quantum particle should also be approximately uniformly distributed. In other words, one should expect
\begin{equation}\label{eq:QC}
|f_n|^2 \dd v_g \Rightarrow \dd v_g
\end{equation}
A weak form of this result (essentially, proving \eqref{eq:QC} along a subsequence of asymptotic density one) was proved in the celebrated works Shnirelman \cite{Shnirelman} and Zelditch \cite{Zelditch}, and later generalised Colin de Verdière \cite{CdV}. 
Its strong form, asserting that there is no need to restrict oneself to a subsequence of density one and so that there are no exceptionally behaving eigenfunctions, is a famous conjecture of Rudnick and Sarnak \cite{RudnickSarnak} known as \textbf{quantum unique ergodicity} (QUE). Lindenstrauss obtained a Fields medal in 2010 partly for his proof of QUE for surfaces related to number theory \cite{Lindenstrauss}.) 

Motivated by the above ideas and the connection between LQG and hyperbolic surfaces, we conjectured in \cite{BW} that eigenfunctions of LQG also display quantum unique ergodicity:

\begin{problem}[\cite{BW}]\label{C:QUE}
 Fix $\gamma \in (0,2)$, and suppose that the LQG eigenfunctions $\ef_n$ are normalised to have unit $L^2 ( \mu_\ph)$-norm. Show that, as $n \to \infty$,
$$
|\ef_n(x)|^2 \mu_\ph(\dd x) \Rightarrow \frac{\mu_\ph(\dd x)}{\mu_{\ph}(\Sigma)} 
$$
 in the weak-$*$ topology, in probability.
\end{problem}

In particular, the conjecture implies that LQG eigenfunctions are a.s. \textbf{delocalised}: i.e., most of their $L^2$ mass is not concentrated in a few microscopic regions, but is instead uniformly spread out. This is in stark contrast with a famous prediction concerning the 2D Anderson model, where microscopic disorder is widely expected to lead to eigenfunction localisation (no matter how small the strength of the disorder). In fact, this conjecture (if true) would mark the first instance of delocalisation in 2D random geometry.

\medskip Quantum chaos is also expected to manifest itself on the spacing statistics of eigenvalues. Indeed, in localised situations (i.e., in the absence of quantum chaos) one expects eigenfunctions to be roughly independent since they are determined by the randomness in a very localised region of space. Therefore it is natural to expect that the eigenvalues will be roughly independent and, once ordered, should therefore correspond (after scaling so the spacing between consecutive eigenvalues is of order one) to a Poisson point process. By contrast, in delocalised situations (when quantum chaos takes place) it is expected that eigenvalues repel each other and display statistics characteristic of random matrix ensembles, more precisely the Gaussian Orthogonal Ensemble (GOE). This remarkable prediction was made in \cite{BGS1984} and remains a fundamental problem in the study of hyperbolic surfaces. See Sarnak \cite{Sarnak} for a discussion of this problem (and much more) and Rudnick \cite{Rudnick} for recent partial progress on this in the context of Weil-Petersson random hyperbolic surfaces. In the LQG context, we conjectured in \cite{BW} (inspired by the same connection to quantum chaos as above) that local eigenvalue statistics are also governed by the Gaussian Orthogonal Ensemble (GOE) of random matrices (see e.g. \cite{AGZ, Meh2004} for an introduction). For instance, in the concrete example of level spacing distribution of eigenvalues, we conjectured:

\begin{figure}
\begin{center}
\includegraphics[width=0.6\textwidth]{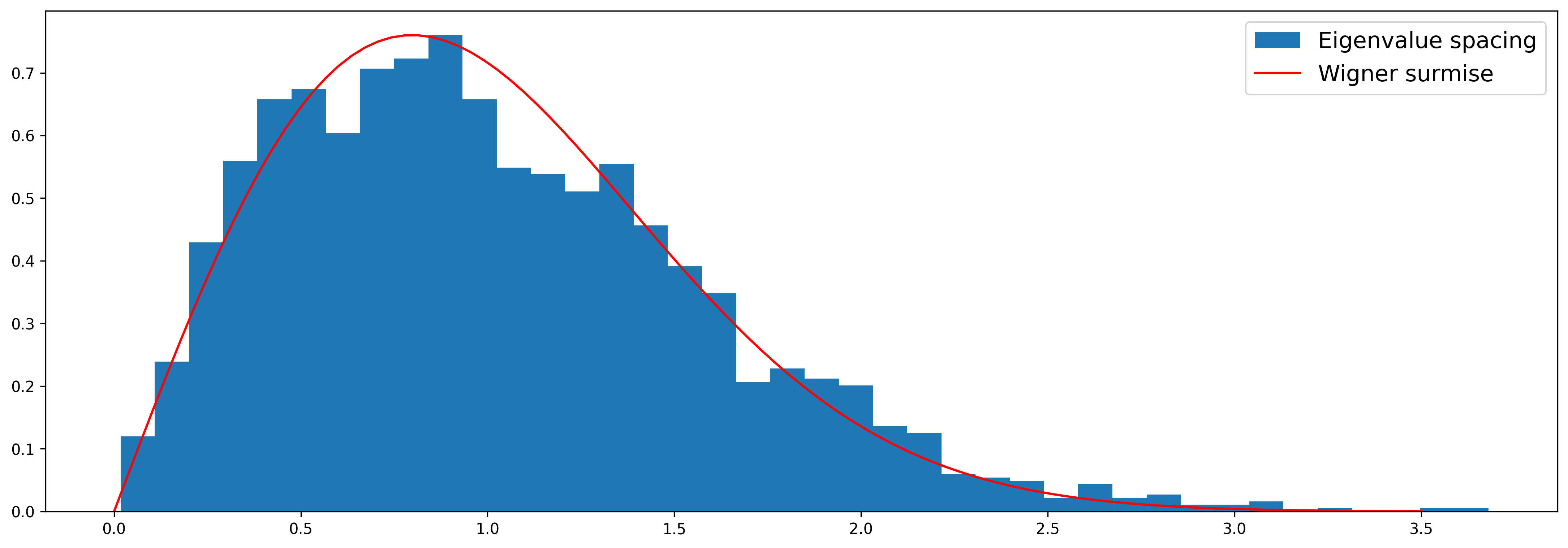}
\end{center}
\caption{Empirical spacing distribution based on the first 2000 LQG eigenvalues (blue) versus GOE statistics (red). Picture from \cite{BW}.}
\label{fig:ev-spacing}
\end{figure}

\begin{problem}\label{C:Sar} 
For each $x \ge 0$, show that 
\begin{equation}\label{CSar}
\frac1{N} \sum_{j=1}^{N} 1_{\{ c_\gamma \mu_\ph(\Sigma)(\boldsymbol{\lambda}_{j+1} - \boldsymbol{\lambda}_j) \le x\} } \xrightarrow[N\to\infty]{p} F_{\emph{GOE}}(x),
\end{equation}
where $F_{\emph{GOE}} (x)$ is the GOE \emph{level-spacing distribution}. 
\end{problem}

Note that the rescaled eigenvalue gap $c_\gamma \mu_\ph(\Sigma)(\boldsymbol{\lambda}_{j+1} - \boldsymbol{\lambda}_j)$ is considered above since it is approximately  equal to $1$ on average in the long run, as established by our Weyl's law (Theorem \ref{T:Weyl}). 
The spacing distribution $F_{\text{GOE}}$, also known as Gaudin distribution (for $\beta = 1$) in the literature, may be expressed in terms of a Fredholm determinant involving the Sine kernel \cite{Gau1961} as well as the Painlev\'e transcendents  \cite{ForresterWitte}. 
See  Figure \ref{fig:ev-spacing} for a comparison between the empirical LQG eigenvalue spacing distribution and our GOE conjecture.\\

\begin{figure}
\begin{center}
\includegraphics[width=0.5\textwidth]{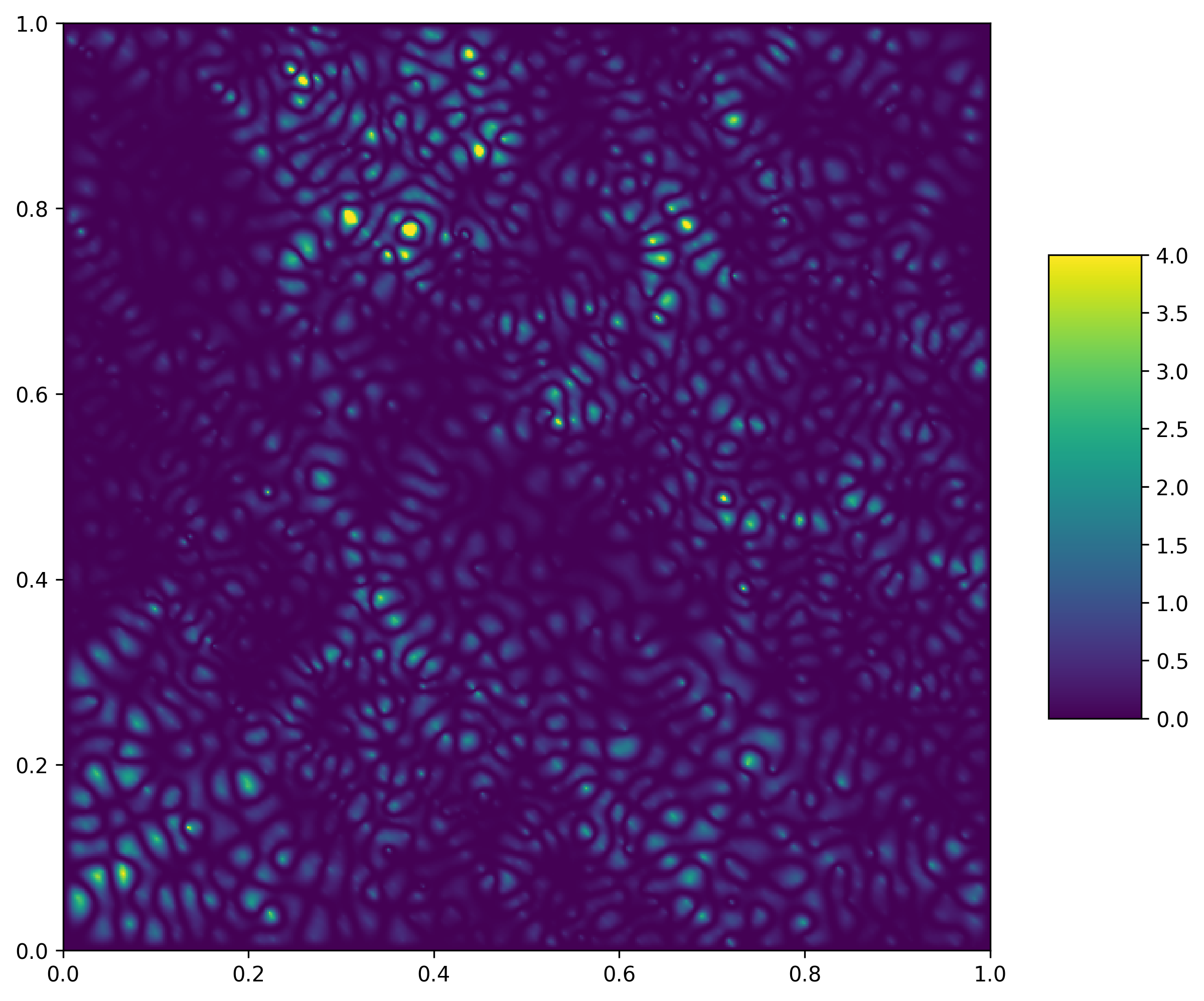}
\end{center}
\caption{Heat map of $|\mathbf{f}_n|^2$ for $n = 2000$. Picture from \cite{BW}.}
\label{fig:Berry}
\end{figure}

A final aspect of quantum chaos concerns the local behaviour of eigenfunctions, see Figure \ref{fig:Berry}. What do they look like in the vicinity of a given typical point, in the limit as $n\to \infty$? In 1977, Berry \cite{Ber1977} proposed that when quantum chaos is relevant (i.e., in delocalised situation and thus in particular for hyperbolic surfaces) the eigenfunctions should be described a certain random field, now known as the Berry random wave model. This is a Gaussian centered random field $(b(x))_{x\in \R^2}$ in the plane, whose covariance function is, by definition, 
\begin{equation}
\E[ b(x) b(y)] = J_0 ( \| x-y \|),
\end{equation}
where $J_0$ is the Bessel function of the first kind and of order 0. This leads us to the following question:

\begin{problem}\label{Prob:Berry}
Let $x$ be sampled from $\mu_\ph$, and let $r_n>0$ be a rescaling (to be determined). Is it true that $\mathbf{f}_n ( x + r_n \cdot)$ converges to Berry's random wave model as $n\to \infty$?
\end{problem}

Here the rescaling $r_n$ is related to wavelength of the eigenfunction $\mathbf{f}_n$, but may also depend on the local behaviour of $\ph$ near $x$. This problem may be simplified by averaging over a given spectral window: instead of considering a single eigenfunction $\mathbf{f}_n$  associated to $\bl_n$, one considers a window of the form $[\bl_n , \bl_n + \omega_n]$ and tries to prove Problem \ref{Prob:Berry} for the random function $$\tilde{\mathbf{f}}_n: = \sum_{k: \bl_k\in [\bl_n, \bl_n + \omega_n]} X_k \mathbf{f}_k,$$
 where $(X_k)_{k\ge 0}$ are i.i.d. standard Gaussians.

\subsection*{Acknowledgements}

It is a pleasure to thank my collaborator Mo Dick Wong for many discussions related to spectral geometry and LQG while we were working on the paper \cite{BW}. Likewise, I thank Jakob Klein (as well as Tomas Alcalde) for our discussions during our work on \cite{BK}. Thanks also to Joffrey Mathien for comments on a draft of this paper. Finally, I gratefully acknowledge the support from the Austrian Science Fund (FWF) grants 10.55776/F1002 and 10.55776/PAT1878824. 







\begin{large}
\bibliographystyle{alpha}  
\bibliography{biblio_ERC2023.bib}

\newcommand{\etalchar}[1]{$^{#1}$}
\begin{thebibliography}{DRSV14b}

\bibitem[AGZ10]{AGZ}
Greg~W. Anderson, Alice Guionnet, and Ofer Zeitouni.
\newblock {\em An introduction to random matrices}, volume 118 of {\em
  Cambridge Studies in Advanced Mathematics}.
\newblock Cambridge University Press, Cambridge, 2010.

\bibitem[AHS24]{AHS}
Morris Ang, Nina Holden, and Xin Sun.
\newblock Integrability of sle via conformal welding of random surfaces.
\newblock {\em Communications on Pure and Applied Mathematics},
  77(5):2651--2707, 2024.

\bibitem[AK16]{AndresKajino}
Sebastian Andres and Naotaka Kajino.
\newblock Continuity and estimates of the {L}iouville heat kernel with
  applications to spectral dimensions.
\newblock {\em Probab. Theory Related Fields}, 166(3-4):713--752, 2016.

\bibitem[ANR{\etalchar{+}}98]{Ambjorn}
Jan Ambj{\o}rn, Jakob~L. Nielsen, Juri Rolf, Dimitrij Boulatov, and Yoshiyuki
  Watabiki.
\newblock The spectral dimension of {$2$}{D} quantum gravity.
\newblock {\em J. High Energy Phys.}, (2):Paper 10, 8, 1998.

\bibitem[APPS22]{AngParkPfefferSheffield}
Morris Ang, Minjae Park, Joshua Pfeffer, and Scott Sheffield.
\newblock Brownian loops and the central charge of a {L}iouville random
  surface.
\newblock {\em Annals of Probability}, 50(4):1322--1358, 2022.

\bibitem[APS19]{APScritical}
Juhan Aru, Ellen Powell, and Avelio Sep{\'u}lveda.
\newblock Critical {L}iouville measure as a limit of subcritical measures.
\newblock {\em Electron. Commun. Probab.}, (24):1--16, 2019.

\bibitem[ARS22]{AngRemySun}
Morris Ang, Guillaume Remy, and Xin Sun.
\newblock The moduli of annuli in random conformal geometry.
\newblock {\em arXiv preprint arXiv:2203.12398}, 2022.

\bibitem[ARS23]{FZZ}
Morris Ang, Guillaume Remy, and Xin Sun.
\newblock {FZZ} formula of boundary {L}iouville {CFT} via conformal welding.
\newblock {\em Journal of the European Mathematical Society}, 27(3):1209--1266,
  2023.

\bibitem[Bef08]{Beffara}
Vincent Beffara.
\newblock The dimension of the {SLE} curves.
\newblock {\em Ann. Probab.}, 36:1421 -- 1452, 2008.

\bibitem[Ber72]{Berezin}
Felix~A Berezin.
\newblock Covariant and contravariant symbols of operators.
\newblock {\em Mathematics of the USSR-Izvestiya}, 6(5):1117, 1972.

\bibitem[Ber77]{Ber1977}
Michael~V Berry.
\newblock Regular and irregular semiclassical wavefunctions.
\newblock {\em Journal of Physics A: Mathematical and General}, 10(12):2083,
  1977.

\bibitem[Ber15]{diffrag}
Nathana\"{e}l Berestycki.
\newblock Diffusion in planar {L}iouville quantum gravity.
\newblock {\em Ann. Inst. Henri Poincar\'{e} Probab. Stat.}, 51(3):947--964,
  2015.

\bibitem[Ber17]{BerestyckiGMC}
Nathana\"{e}l Berestycki.
\newblock An elementary approach to {G}aussian multiplicative chaos.
\newblock {\em Electron. Commun. Probab.}, 22:Paper No. 27, 12, 2017.

\bibitem[BG22]{BerestyckiGwynne}
Nathana\"{e}l Berestycki and Ewain Gwynne.
\newblock Random walks on mated-{CRT} planar maps and {L}iouville {B}rownian
  motion.
\newblock {\em Comm. Math. Phys.}, 395(2):773--857, 2022.

\bibitem[BGRV16]{HKPZ}
Nathana\"{e}l Berestycki, Christophe Garban, R\'{e}mi Rhodes, and Vincent
  Vargas.
\newblock K{PZ} formula derived from {L}iouville heat kernel.
\newblock {\em J. Lond. Math. Soc. (2)}, 94(1):186--208, 2016.

\bibitem[BGS84]{BGS1984}
O.~Bohigas, M.~J. Giannoni, and C.~Schmit.
\newblock Spectral properties of the {L}aplacian and random matrix theories.
\newblock {\em Journal de Physique Lettres}, 45(21):1015--1022, 1984.

\bibitem[BK25]{BK}
Nathana\"el Berestycki and Jakob Klein.
\newblock Spectral expansion of {LQG} heat trace and {KPZ} scaling.
\newblock {\em in preparation}, 2025+.

\bibitem[BOW25]{BOW}
Nathana\"el Berestycki, Joona Oikarinen, and Yilin Wang.
\newblock On the renormalised {L}iouville action with conical singularities.
\newblock 2025+.
\newblock In preparation.

\bibitem[BP24]{BP}
Nathana\"el Berestycki and Ellen Powell.
\newblock {\em Gaussian free field and {L}iouville quantum gravity}.
\newblock Cambridge Series in Advanced Mathematics. Cambridge University Press,
  2024.

\bibitem[BW23]{BW}
Nathana\"el Berestycki and Mo-Dick Wong.
\newblock Weyl's law in {L}iouville quantum gravity.
\newblock 2023.
\newblock arXiv preprint arXiv:2307.05407.

\bibitem[CdV85]{CdV}
Y.~Colin~de Verdi\`ere.
\newblock Ergodicit\'{e} et fonctions propres du laplacien.
\newblock {\em Comm. Math. Phys.}, 102(3):497--502, 1985.

\bibitem[CH08]{CH2008}
David Croydon and Ben Hambly.
\newblock Self-similarity and spectral asymptotics for the continuum random
  tree.
\newblock {\em Stochastic processes and their applications}, 118(5):730--754,
  2008.

\bibitem[CH10]{CH2010}
David~A Croydon and Ben~M Hambly.
\newblock Spectral asymptotics for stable trees.
\newblock {\em Electronic Journal of Probability}, 15(57):1772--1801, 2010.

\bibitem[Cou20]{Courant}
Richard Courant.
\newblock {\"U}ber die {E}igenwerte bei den {D}ifferentialgleichungen der
  mathematischen {P}hysik.
\newblock {\em Mathematische Zeitschrift}, 7(1):1--57, 1920.

\bibitem[Cro08]{Croydon}
David~A Croydon.
\newblock Volume growth and heat kernel estimates for the continuum random
  tree.
\newblock {\em Probability Theory and Related Fields}, 140(1):207--238, 2008.

\bibitem[Dav89]{Davies}
Edward~Brian Davies.
\newblock {\em Heat kernels and spectral theory}.
\newblock Number~92. Cambridge university press, 1989.

\bibitem[DDDF20]{DDDF}
Jian Ding, Julien Dub\'{e}dat, Alexander Dunlap, and Hugo Falconet.
\newblock Tightness of {L}iouville first passage percolation for {$\gamma \in
  (0,2)$}.
\newblock {\em Publ. Math. Inst. Hautes \'{E}tudes Sci.}, 132:353--403, 2020.

\bibitem[DKRV16]{DKRV}
Fran\c{c}ois David, Antti Kupiainen, R\'{e}mi Rhodes, and Vincent Vargas.
\newblock Liouville quantum gravity on the {R}iemann sphere.
\newblock {\em Comm. Math. Phys.}, 342(3):869--907, 2016.

\bibitem[DMS14]{LQGmating}
Bertrand Duplantier, Jason Miller, and Scott Sheffield.
\newblock Liouville quantum gravity as a mating of trees.
\newblock {\em arXiv preprint arXiv:1409.7055}, 2014.

\bibitem[DRSV14a]{DRSV1}
Bertrand Duplantier, R{\'e}mi Rhodes, Scott Sheffield, and Vincent Vargas.
\newblock {Critical Gaussian multiplicative chaos: Convergence of the
  derivative martingale}.
\newblock {\em The Annals of Probability}, 42(5):1769 -- 1808, 2014.

\bibitem[DRSV14b]{DRSV2}
Bertrand Duplantier, R{\'e}mi Rhodes, Scott Sheffield, and Vincent Vargas.
\newblock Renormalization of critical {G}aussian multiplicative chaos and {KPZ}
  relation.
\newblock {\em Communications in Mathematical Physics}, 330(1):283--330, 2014.

\bibitem[DS11]{DS}
Bertrand Duplantier and Scott Sheffield.
\newblock Liouville quantum gravity and {KPZ}.
\newblock {\em Invent. Math.}, 185(2):333--393, 2011.

\bibitem[FLPS23]{FLPS2023}
Nikolay Filonov, Michael Levitin, Iosif Polterovich, and David~A Sher.
\newblock P{\'o}lya's conjecture for euclidean balls.
\newblock {\em Inventiones mathematicae}, pages 1--41, 2023.

\bibitem[FOT11]{Fukushima}
Masatoshi Fukushima, Yoichi Oshima, and Masayoshi Takeda.
\newblock {\em Dirichlet forms and symmetric {M}arkov processes}, volume~19.
\newblock Walter de Gruyter, 2011.

\bibitem[FW00]{ForresterWitte}
P.~J. Forrester and N.~S. Witte.
\newblock Exact {W}igner surmise type evaluation of the spacing distribution in
  the bulk of the scaled random matrix ensembles.
\newblock {\em Lett. Math. Phys.}, 53(3):195--200, 2000.

\bibitem[Gau61]{Gau1961}
Michel Gaudin.
\newblock Sur la loi limite de l'espacement des valeurs propres d'une matrice
  al{\'e}atoire.
\newblock {\em Nuclear Physics}, 25:447--458, 1961.

\bibitem[GH20]{GwynneHutchcroft}
Ewain Gwynne and Tom Hutchcroft.
\newblock Anomalous diffusion of random walk on random planar maps.
\newblock {\em Probability Theory and Related Fields}, 178(1):567--611, 2020.

\bibitem[GKRVar]{GKRV}
Colin Guillarmou, Antti Kupiainen, R{\'e}mi Rhodes, and Vincent Vargas.
\newblock Conformal bootstrap in liouville theory.
\newblock {\em Acta Math.}, to appear.
\newblock arXiv:2005.11530.

\bibitem[GM21a]{GM}
Ewain Gwynne and Jason Miller.
\newblock Existence and uniqueness of the {L}iouville quantum gravity metric
  for {$\gamma\in(0,2)$}.
\newblock {\em Invent. Math.}, 223(1):213--333, 2021.

\bibitem[GM21b]{GwynneMiller}
Ewain Gwynne and Jason Miller.
\newblock Random walk on random planar maps.
\newblock {\em The Annals of Probability}, 49(3):1097--1128, 2021.

\bibitem[GRV14]{GRV_hk}
Christophe Garban, R\'{e}mi Rhodes, and Vincent Vargas.
\newblock On the heat kernel and the {D}irichlet form of {L}iouville {B}rownian
  motion.
\newblock {\em Electron. J. Probab.}, 19:no. 96, 25, 2014.

\bibitem[GRV16]{GRV}
Christophe Garban, R\'{e}mi Rhodes, and Vincent Vargas.
\newblock Liouville {B}rownian motion.
\newblock {\em Ann. Probab.}, 44(4):3076--3110, 2016.

\bibitem[GWW92]{Iso}
C.~Gordon, D.~Webb, and S.~Wolpert.
\newblock Isospectral plane domains and surfaces via {R}iemannian orbifolds.
\newblock {\em Invent. Math.}, 110(1):1--22, 1992.

\bibitem[Ham00]{Hambly}
Ben~M. Hambly.
\newblock On the asymptotics of the eigenvalue counting function for random
  recursive {S}ierpinski gaskets.
\newblock {\em Probability theory and related fields}, 117:221--247, 2000.

\bibitem[HS23]{HoldenSun}
Nina Holden and Xin Sun.
\newblock Convergence of uniform triangulations under the {C}ardy embedding.
\newblock {\em Acta Math.}, 230(1):93--203, 2023.

\bibitem[Ivr80]{Ivrii}
Victor Ivrii.
\newblock Second term of the spectral asymptotic expansion of the
  {L}aplace-{B}eltrami operator on manifolds with boundary.
\newblock {\em Functional Anal. Appl}, 14:98--106, 1980.

\bibitem[Ivr16]{Ivrii100}
Victor Ivrii.
\newblock 100 years of {W}eyl's law.
\newblock {\em Bull. Math. Sci.}, 6(3):379--452, 2016.

\bibitem[Kac66]{Kac}
Mark Kac.
\newblock Can one hear the shape of a drum?
\newblock {\em Amer. Math. Monthly}, 73(4, part II):1--23, 1966.

\bibitem[Kah85]{K85}
Jean-Pierre Kahane.
\newblock Le chaos multiplicatif.
\newblock {\em C. R. Acad. Sci. Paris S\'{e}r. I Math.}, 301(6):329--332, 1985.

\bibitem[Kru25]{Krusell}
Ellen Krusell.
\newblock Polyakov-{A}lvarez formula for curvilinear polygonal domains with
  slits.
\newblock {\em arXiv preprint arXiv:2501.07682}, 2025.

\bibitem[KRV20]{DOZZ}
Antti Kupiainen, R\'{e}mi Rhodes, and Vincent Vargas.
\newblock Integrability of {L}iouville theory: proof of the {DOZZ} formula.
\newblock {\em Ann. of Math. (2)}, 191(1):81--166, 2020.

\bibitem[Lac23]{Lacoin}
Hubert Lacoin.
\newblock Critical {G}aussian {M}ultiplicative {C}haos for singular measures.
\newblock {\em arXiv preprint arXiv:2304.05781}, 2023.

\bibitem[LG13]{LeGall}
Jean-Fran\c{c}ois Le~Gall.
\newblock Uniqueness and universality of the {B}rownian map.
\newblock {\em Ann. Probab.}, 41(4):2880--2960, 2013.

\bibitem[Lin06]{Lindenstrauss}
Elon Lindenstrauss.
\newblock Invariant measures and arithmetic quantum unique ergodicity.
\newblock {\em Ann. of Math. (2)}, 163(1):165--219, 2006.

\bibitem[LR16]{LuRowlett}
Zhiqin Lu and Julie~M Rowlett.
\newblock One can hear the corners of a drum.
\newblock {\em Bulletin of the London Mathematical Society}, 48(1):85--93,
  2016.

\bibitem[LRV17]{LacoinRhodesVargas}
Hubert Lacoin, R\'{e}mi Rhodes, and Vincent Vargas.
\newblock Semiclassical limit of {L}iouville field theory.
\newblock {\em J. Funct. Anal.}, 273(3):875--916, 2017.

\bibitem[LRV22]{LacoinRhodesVargas2}
Hubert Lacoin, R\'{e}mi Rhodes, and Vincent Vargas.
\newblock The semiclassical limit of {L}iouville conformal field theory.
\newblock {\em Ann. Fac. Sci. Toulouse Math. (6)}, 31(4):1031--1083, 2022.

\bibitem[LY83]{LiYau}
Peter Li and Shing-Tung Yau.
\newblock On the {S}chr{\"o}dinger equation and the eigenvalue problem.
\newblock {\em Communications in Mathematical Physics}, 88(3):309--318, 1983.

\bibitem[Meh04]{Meh2004}
Madan~Lal Mehta.
\newblock {\em Random matrices}.
\newblock Elsevier, 3rd edition, 2004.

\bibitem[Mie13]{Miermont}
Gr\'{e}gory Miermont.
\newblock The {B}rownian map is the scaling limit of uniform random plane
  quadrangulations.
\newblock {\em Acta Math.}, 210(2):319--401, 2013.

\bibitem[Mil64]{Milnor}
John Milnor.
\newblock Eigenvalues of the {L}aplace operator on certain manifolds.
\newblock {\em Proceedings of the National Academy of Sciences},
  51(4):542--542, 1964.

\bibitem[MRVZ16]{MRVZ}
P.~Maillard, R.~Rhodes, V.~Vargas, and O.~Zeitouni.
\newblock Liouville heat kernel: regularity and bounds.
\newblock {\em Ann. Inst. Henri Poincar\'{e} Probab. Stat.}, 52(3):1281--1320,
  2016.

\bibitem[MS20]{Mliouville1}
Jason Miller and Scott Sheffield.
\newblock Liouville quantum gravity and the {B}rownian map {I}: the {${\rm
  QLE}(8/3,0)$} metric.
\newblock {\em Invent. Math.}, 219(1):75--152, 2020.

\bibitem[NQSZ23]{Backbone}
Pierre Nolin, Wei Qian, Xin Sun, and Zijie Zhuang.
\newblock Backbone exponent for two-dimensional percolation.
\newblock {\em arXiv preprint arXiv:2309.05050}, 2023.

\bibitem[Pic93]{Picard}
{\'E}mile Picard.
\newblock De l'{\'e}quation ${\Delta} u= ke^{ u} $ sur une surface de {R}iemann
  ferm{\'e}e.
\newblock {\em Journal de Math{\'e}matiques Pures et Appliqu{\'e}es},
  9:273--291, 1893.

\bibitem[Poi98]{Poinc}
Henri Poincar{\'e}.
\newblock Les fonctions fuchsiennes et l'{\'e}quation $\delta u= e^{u}$.
\newblock {\em Journal de math{\'e}matiques pures et appliqu{\'e}es},
  4:137--230, 1898.

\bibitem[P{\'o}l54]{Pol1954}
Georg P{\'o}lya.
\newblock {\em Mathematics and plausible reasoning (2 volumes)}.
\newblock Princeton University Press, 1954.

\bibitem[P{\'o}l61]{Polya_tilingdomains}
Georg P{\'o}lya.
\newblock On the eigenvalues of vibrating membranes.
\newblock {\em Proceedings of the London Mathematical Society},
  s3-11(1):419--433, 1961.

\bibitem[Pol81]{Polyakov}
A.~M. Polyakov.
\newblock Quantum geometry of bosonic strings.
\newblock {\em Phys. Lett. B}, 103(3):207--210, 1981.

\bibitem[RS94]{RudnickSarnak}
Ze\'{e}v Rudnick and Peter Sarnak.
\newblock The behaviour of eigenstates of arithmetic hyperbolic manifolds.
\newblock {\em Comm. Math. Phys.}, 161(1):195--213, 1994.

\bibitem[Rud22]{Rudnick}
Ze{\'e}v Rudnick.
\newblock {GOE} statistics on the moduli space of surfaces of large genus.
\newblock {\em arXiv preprint arXiv:2202.06379}, 2022.

\bibitem[RV11]{RhodesVargas}
R{\'e}mi Rhodes and Vincent Vargas.
\newblock {KPZ} formula for log-infinitely divisible multifractal random
  measures.
\newblock {\em ESAIM: Probability and Statistics}, 15:358--371, 2011.

\bibitem[RV14a]{RhodesVargas_survey}
R\'{e}mi Rhodes and Vincent Vargas.
\newblock Gaussian multiplicative chaos and applications: a review.
\newblock {\em Probab. Surv.}, 11:315--392, 2014.

\bibitem[RV14b]{RVspectral}
R\'{e}mi Rhodes and Vincent Vargas.
\newblock Spectral dimension of {L}iouville quantum gravity.
\newblock {\em Ann. Henri Poincar\'{e}}, 15(12):2281--2298, 2014.

\bibitem[RV15]{RV_critical}
R\'{e}mi Rhodes and Vincent Vargas.
\newblock Liouville {B}rownian motion at criticality.
\newblock {\em Potential Anal.}, 43(2):149--197, 2015.

\bibitem[Sar03]{Sarnak}
Peter Sarnak.
\newblock Spectra of hyperbolic surfaces.
\newblock {\em Bull. Amer. Math. Soc. (N.S.)}, 40(4):441--478, 2003.

\bibitem[She16]{zipper}
Scott Sheffield.
\newblock Conformal weldings of random surfaces: {SLE} and the quantum gravity
  zipper.
\newblock {\em Ann. Probab.}, 44(5):3474--3545, 2016.

\bibitem[TZ03]{TakhtajanZograf}
Leon Takhtajan and Peter Zograf.
\newblock Hyperbolic 2-spheres with conical singularities, accessory parameters
  and k{\"a}hler metrics on $\mathcal{M}_{0,n}$.
\newblock {\em Trans. AMS}, 355(5):1857--1867, 2003.

\bibitem[\v{S}74]{Shnirelman}
A.~I. \v{S}nirelman.
\newblock Ergodic properties of eigenfunctions.
\newblock {\em Uspehi Mat. Nauk}, 29(6(180)):181--182, 1974.

\bibitem[Wey11]{Weyl}
Hermann Weyl.
\newblock {\"U}ber die asymptotische verteilung der eigenwerte.
\newblock {\em Nachrichten von der Gesellschaft der Wissenschaften zu
  G{\"o}ttingen, Mathematisch-Physikalische Klasse}, 1911:110--117, 1911.

\bibitem[Wil74]{Wil1974}
David Williams.
\newblock Path decomposition and continuity of local time for one-dimensional
  diffusions, {I}.
\newblock {\em Proceedings of the London Mathematical Society}, 3(4):738--768,
  1974.

\bibitem[Zel00]{Zelditch}
S.~Zelditch.
\newblock Spectral determination of analytic bi-axisymmetric plane domains.
\newblock {\em Geom. Funct. Anal.}, 10(3):628--677, 2000.

\end{thebibliography}
\end{large}

\end{document}